\tikzset{
    point/.style={circle,inner sep=1pt,minimum size=3pt,fill=red},
}
\tikzset{
  decorate with/.style={decorate,decoration={shape backgrounds,shape=#1,shape size=1.5mm}},
   deco/.style={decorate with=dart},
   ordi/.style={draw,-stealth,  very thick},
   conj/.style={dashed, draw=red, thick},
   ve/.style={circle, draw, thick, fill=blue!20, inner sep=1pt, outer sep=2pt, minimum size=7pt},
   dv/.style={star,star points=5,star point ratio=2, draw, thick, fill=green!20, inner sep=1pt,outer sep=2pt,minimum size=7pt}
 }
\newcommand*{\rom}[1]{\expandafter\@slowromancap\romannumeral #1@}
\newtheorem{theorem}{Theorem}[section]
\newtheorem{corollary}[theorem]{Corollary}
\newtheorem{lemma}[theorem]{Lemma}
\newtheorem{proposition}[theorem]{Proposition}
\theoremstyle{definition}
\newtheorem{definition}[theorem]{Definition}
\newtheorem{remark}[theorem]{Remark}
\DeclareMathOperator{\Perf}{\mathsf{Perf}}
\newcommand{\tr}{\mathsf{tr}}
\DeclareMathOperator{\res}{\mathsf{res}}
\DeclareMathOperator{\ev}{\mathsf{ev}}
\DeclareMathOperator{\Coh}{\mathsf{Coh}}
\DeclareMathOperator{\TF}{\mathsf{TF}}
\DeclareMathOperator{\VB}{\mathsf{VB}}
\DeclareMathOperator{\Tri}{\mathsf{Tri}}
\DeclareMathOperator{\Pic}{\mathsf{Pic}}
\DeclareMathOperator{\Hom}{\mathsf{Hom}}
\DeclareMathOperator{\Ext}{\mathsf{Ext}}
\DeclareMathOperator{\Aut}{\mathsf{Aut}}
\DeclareMathOperator{\End}{\mathsf{End}}
\DeclareMathOperator{\Spec}{\mathsf{Spec}}
\DeclareMathOperator{\lieg}{\mathfrak{g}}
\DeclareMathOperator{\lieh}{\mathfrak{h}}
\newcommand{\bul}{\scriptstyle{\bullet}}
\newcommand{\kk}{\mathbbm{k}}
\newcommand{\CC}{\mathbb{C}}
\newcommand{\NN}{\mathbb{N}}
\newcommand{\FF}{\mathbb{F}}
\newcommand{\PP}{\mathbb{P}}
\newcommand{\Ad}{\mbox{\emph{Ad}}}
\newcommand{\kA}{\mathcal{A}}
\newcommand{\kB}{\mathcal{B}}
\newcommand{\kC}{\mathcal{C}}
\newcommand{\kF}{\mathcal{F}}
\newcommand{\kG}{\mathcal{G}}
\newcommand{\kH}{\mathcal{H}}
\newcommand{\kI}{\mathcal{I}}
\newcommand{\kO}{\mathcal{O}}
\newcommand{\kP}{\mathcal{P}}
\newcommand{\kQ}{\mathcal{Q}}
\newcommand{\kK}{\mathcal{K}}
\newcommand{\kV}{\mathcal{V}}
\newcommand{\kT}{\mathcal{T}}
\newcommand{\lar}{\longrightarrow}
\newcommand{\llbrace}{(\!(}
\newcommand{\rrbrace}{)\!)}
\begin{document}

\title[Torsion free sheaves  and Yang--Baxter equation]{Torsion free sheaves on Weierstra\ss{} cubic curves    and  the classical  Yang--Baxter equation}

\author{Igor Burban}
\address{
Mathematisches Institut,
Universit\"at zu K\"oln,
Weyertal 86--90,
D--50931 K\"oln,
Germany
}
\email{burban@math.uni-koeln.de}

\author{Lennart Galinat}
\address{Mathematisches Institut,
Universit\"at zu K\"oln,
Weyertal 86--90,
D--50931 K\"oln,
Germany
}
\email{lgalinat@math.uni-koeln.de}

\begin{abstract}
This work deals with an algebro--geometric theory of solutions of the classical Yang--Baxter equation  based on torsion free coherent sheaves of Lie algebras on Weierstra\ss{} cubic curves.
\end{abstract}

\maketitle
\section{Introduction}

\noindent
This paper is devoted to  a study of  the classical Yang--Baxter equation (CYBE)
\begin{equation*}
\bigl[r^{12}(x_1, x_2), r^{13}(x_1, x_3)\bigr] + \bigl[r^{13}(x_1,x_3), r^{23}(x_2,x_3)\bigr] +
\bigl[r^{12}(x_1, x_2),
r^{23}(x_2, x_3)\bigr] = 0
\end{equation*}
from an algebro--geometric perspective.
Here,  $\mathfrak{g}$ is  a finite dimensional complex simple Lie algebra and  $r: (\CC^2, 0) \lar \mathfrak{g} \otimes \mathfrak{g}$  is the germ of a meromorphic function. Solutions of CYBE (also called $r$--matrices) have numerous applications in mathematical physics, most notably in the modern theory of classical integrable systems; see for example  \cite{FaddeevTakhtajan, ReymanST2}. The simplest solution of CYBE is the Yang's $r$--matrix
$r(x, y) = \dfrac{\gamma}{y-x}$, where $\gamma \in \lieg \otimes \lieg$ is the Casimir element.

Belavin and Drinfeld  proved that any non--degenerate skew--symmetric solution of CYBE is (up to a certain equivalence relation)  either elliptic, trigonometric or rational \cite{BelavinDrinfeld, BelavinDrinfeld2}. Elliptic solutions
of CYBE exist only for the Lie algebra $\lieg = \mathfrak{sl}_n(\CC)$; explicit formulae for them were found by Belavin \cite{Belavin}. In \cite{BelavinDrinfeld} the authors showed  that Belavin's list of elliptic $r$--matrices is in fact exhaustive. Moreover, Belavin and Drinfeld  classified all  trigonometric solutions of CYBE; the underlying combinatorial pattern (including both discrete and continuous parameters) turned out to be  rather complicated \cite{BelavinDrinfeld}. The theory of rational solutions of CYBE was developed by Stolin in \cite{Stolin, Stolin2, Stolin3}.

Let $E = \overline{V\bigl(u^2 - 4v^3 + g_2 v + g_3\bigr)} \subset \PP^2$ be a Weierstra\ss{} cubic curve, where $g_2, g_3 \in \CC$. It is well--known that  $E$ is singular  if and only if $g_2^3  -  27 g_3^2 = 0$; in this case $E$ has  a unique singular point $s$, which is a cusp for $g_2 = 0 = g_3$ and a node otherwise.
Let  $\breve{E}$ be the regular part of $E$,  $K$ the field of meromorphic functions on $E$ and $\omega$ a  regular differential
one--form on $E$ (taken in the Rosenlicht sense if  $E$ is singular; see e.g.~\cite[Section II.6]{BarthHulekPetersVen}).

\smallskip
\noindent
Next,  consider a  coherent sheaf of Lie algebras $\kA$ on $E$ such that:
\begin{enumerate}
\item $H^0(E, \kA) = 0 = H^1(E, \kA)$;
\item $\kA$ is weakly $\lieg$--locally free on $\breve{E}$, i.e.~$\kA\big|_x \cong \lieg$ for all $x \in \breve{E}$.
\end{enumerate}
The first  assumption implies that the sheaf $\kA$ is torsion free (in particular, locally free when $E$ is smooth). A consequence  of  the second assumption is that
 the space  $A_K$ of global sections of the rational envelope of $\kA$ is  a simple Lie algebra over $K$.
In the case the curve $E$ is singular, consider the $\CC$--bilinear pairing $\kappa^\omega$ given as the composition
$$
A_K \times A_K \stackrel{\kappa}\lar K \stackrel{\res_s^\omega}\lar \CC,
$$
where $\kappa$ is the Killing form of $A_K$ and $\res_s^\omega(f) = \res_s(f\omega)$ for $f \in K$ (the residue of the meromorphic one--form $f \omega$
at the point $s \in E$ is taken in the Rosenlicht sense). We impose the following additional requirement (which is automatically fulfilled  provided  the sheaf $\kA$ is locally free) on the germ $A_s$ of the sheaf $\kA$ at the singular point $s$:
\begin{enumerate}
  \setcounter{enumi}{2}
  \item $A_s$ is a coisotropic Lie subalgebra of $A_K$ with respect to the pairing $\kappa^\omega$.
\end{enumerate}

\smallskip
\noindent
The first main result of this article is the following.

\smallskip
\noindent
\textbf{Result A}. Consider a pair $(E, \kA)$, where $E$ is a Weierstra\ss{} cubic and $\kA$ a sheaf of Lie algebras on $E$ satisfying  the properties $(1)-(3)$  above.
$$
{\xy 0;/r0.20pc/:
{(30,0)\ellipse(30,10){-}},
{(10,25)\ellipse(7,14){-}},
{(10,25)\ellipse(7,14){-}},
{(12,25)\ellipse(2,7)^,=:a(180){-}},
{\ar@/^3pt/@{-}(10.5,30);(10.5,20)},
{(25,30)\ellipse(7,14){-}},
{(26.5,25.5)\ellipse(4,9)_,=:a(180){-}},
{(24,24)\ellipse(3,8)^,=:a(180){-}},
\POS(25.5,16)*{\bul};
{(45,25)\ellipse(7,14)^^,=:a(90){-}},
{(45,25)\ellipse(7,14)__,=:a(90){-}},
{(45,25)\ellipse(7,2)__,=:a(180){.}},
{(45,25)\ellipse(7,2)^_,=:a(180){-}},
\POS(52,25); \POS(45,11);
**\crv{(51,17)};
\POS(38,25); \POS(45,11);
**\crv{(39,17)};
\POS(52,25);
\POS(15,8); \POS(45,0)*{\bul} ;
**\crv{(16,0)};
\POS(15,-8); \POS(45,0);
**\crv{(16,0)};
\POS(15,8);
\POS(45,11)*{\bul} ;
{\ar@{.}(45,11); (45,0) }
\POS(25,2)*{\bul};
{\ar@{.}(25,2); (25,16) }
\POS(10,-2)*{\bul};
{\ar@{.}(10,-2); (10,11) }
\endxy}
$$
Then there exists a distinguished regular section $\rho \in \Gamma\bigl(\breve{E} \times \breve{E}\setminus \Delta, \kA \boxtimes \kA\bigr)$
(called \emph{geometric r--matrix}) which is a non--degenerate skew--symmetric solution of CYBE:
$$
\bigl[\rho^{12}, \rho^{13}\bigr] + \bigl[\rho^{12}, \rho^{23}\bigr] + \bigl[\rho^{13}, \rho^{23}\bigr] = 0,
$$
where $\Delta \subset \breve{E} \times \breve{E}$ is the diagonal and both sides of the above  equality are viewed as meromorphic sections of   $\kA \boxtimes \kA \boxtimes \kA$ over the triple product $\breve{E} \times \breve{E} \times \breve{E}$; see Theorem \ref{T:CYBEtorsfree}. Trivializing the sheaf $\kA$ over some open subset of $\breve{E}$, we get an elliptic solution of CYBE in the case $E$ is elliptic, a trigonometric solution in the case $E$ is nodal and a rational solution in the case $E$ is cuspidal; see Subsection \ref{SS:Liebialgebras}. It turns out  that  at least all \emph{rational} and \emph{elliptic} solutions of CYBE arise from an appropriate pair $(E, \kA)$; see Theorem \ref{T:geometrizationRational} and Remark \ref{R:ellipticsolutions}.

\smallskip
The idea to express  solutions of CYBE in algebro--geometric terms  was suggested for the first time by   Cherednik \cite{Cherednik}. Going into a different direction, Reyman and Semenov--Tian--Shansky observed in \cite{ReymanST} that all elliptic solutions of CYBE can be obtained from  appropriate  decompositions of the Lie algebra $\lieg\llbrace z\rrbrace$ into a direct sum of Lagrangian subalgebras
$
\lieg \llbrace z\rrbrace = \lieg \llbracket z\rrbracket \dotplus W,
$
where $\lieg \llbrace z\rrbrace$ is equipped with  the pairing $\lieg \llbrace z\rrbrace \times \lieg \llbrace z\rrbrace \lar \CC$  given by the formula $(f, g)\mapsto \res_0\bigl(\kappa(f, g)dz\bigr)$; in this case, $\bigl(\lieg \llbrace z\rrbrace, \lieg \llbracket z\rrbracket,  W\bigr)$  is a so--called \emph{Manin triple}. See also \cite[Lecture 6]{EtingofSchiffmann} for an  account of the theory of Manin triples, where some basic trigonometric and rational solutions were described in such a  way.
The idea to realize Lagrangian decompositions of $\lieg\llbrace z \rrbrace$  geometrically was suggested by Drinfeld \cite{Drinfeld}. In fact, starting with a pair $(E, \kA)$ as in Result A,  we get a \emph{canonical} Lagrangian decomposition
$
Q(\widehat{A}_p) = \widehat{A}_p \dotplus \Gamma\bigl(E\setminus \{p\}, \kA\bigr),
$
where $p = (0:1:0)$ is the ``infinite'' point of the Weierstra\ss{} cubic  $E$, the Lie algebra $\widehat{A}_p$ is the completion of the germ of $\kA$ at $p$, $Q(\widehat{A}_p)$ is the rational envelope of $\widehat{A}_p$ and the pairing on $Q(\widehat{A}_p)$ is given by the  \emph{global} differential one--form $\omega$ on $E$ (which is unique up to a scalar); see Proposition \ref{P:canManinTriples}.
We show in Theorem \ref{T:TwoWaysCYBE} that both geometric approaches to CYBE match with each other and lead to  the same power series expansion for a solution of CYBE.

The interest to the geometric theory of Yang--Baxter equations was revitalized by Polishchuk \cite{Polishchuk1}. He discovered that appropriately interpreted  triple Massey products in the derived category $D^b\bigl(\Coh(X)\bigr)$ of coherent sheaves on a  \emph{Calabi--Yau curve} $X$ (a reduced  projective curve with trivial canonical sheaf)  yield  solutions of the so--called associative Yang--Baxter equation (AYBE) with spectral parameters.
This new type of Yang--Baxter equations turned out to be closely related both to CYBE for the Lie algebra $\mathfrak{sl}_n(\CC)$ as well as to the quantum Yang--Baxter equation \cite{Polishchuk2}. Let   $\kF$ be  a \emph{simple}  locally free sheaf  of rank $n \ge 2$  on
$X$.  Then the sheaf $\kA = \Ad(\kF)$ of traceless endomorphisms of $\kF$  is a locally free sheaf of Lie algebras on $X$ with vanishing cohomology, whose fibers are isomorphic to the Lie algebra $\mathfrak{sl}_n(\CC)$. Based on the description of simple vector bundles on  Kodaira cycles of projective lines given in \cite{OldSurvey}, Polishchuk
computed the corresponding trigonometric solutions of AYBE \cite{Polishchuk2}. Recently, Lekili and Polishchuk  realized  all trigonometric solutions of AYBE in terms of certain triple Massey products in the Fukaya category of an appropriate punctured Riemann surface \cite{LekiliPolishchuk}.

Polishchuk's approach was extended in the joint works of the first--named author with Kreu\ss{}ler \cite{BK4} and Henrich \cite{BH}. In particular, the expression of  triple Massey products in geometric terms   was elaborated in detail and extended to a relative setting of genus one fibrations;  more complicated degenerations of elliptic curves (like the cuspidal Weierstra\ss{} cubic) leading to rational solutions, were included into the picture.

The main novelty of this paper is the appearance of torsion free sheaves in the theory of CYBE. It turns out that precisely torsion free sheaves on Weierstra\ss{} cubics \emph{which  are not locally free} underly  the combinatorial complexity  of the entire panorama of trigonometric and rational solutions.
 Let $X \stackrel{\mu}\lar E$ be the  \emph{Weierstra\ss{} model} of a Calabi--Yau curve $X$ and $\kF$ a simple vector bundle  on $X$. We prove that the  torsion free sheaf  $\kA:= \mu_*\bigl(Ad(\kF)\bigr)$  satisfies all  properties  which are necessary to apply Result A to the pair $(E, \kA)$; see Proposition \ref{P:minimalmodelsheaves}. As a consequence, the approach of
\cite{Polishchuk1, Polishchuk2, BH} to the study of  solutions of CYBE  for the Lie algebra $\mathfrak{sl}_n(\CC)$, based on a computation of triple Massey products for simple  vector bundles over  non--integral Calabi--Yau curves, can be  naturally included into the framework, where only Weierstra\ss{} curves
are necessary; see Theorem \ref{T:CYBEandWeierModel}.

It is a natural question to ask, to what extend the  computations with torsion free sheaves which are not locally free,  could be made constructive.
The theory of torsion free sheaves on singular curves of arithmetic genus one is nowadays relatively well--understood, thanks to the combination of methods of the theory   of matrix problems and the technique of Fourier--Mukai transforms;   see for instance \cite{DrozdGreuel, OldSurvey, Thesis, Burban1, BodnarchukDrozd, Duke, Survey, BK3, BodnarchukDrozd2,BodDroGre}.

We illustrate Result A  by the following example.
Let $\lieg$ be a  simple Lie algebra of Dynkin type $A_{n-1}$, realized in the usual way as the algebra  $\mathfrak{sl}_n(\CC)$ of traceless square matrices.  We denote by $\lieg = \lieg_+ \oplus \lieh \oplus \lieg_-$ the conventional triangular
decomposition of $\lieg$  into the direct sum of the Lie algebras of strictly upper triangular, diagonal and strictly lower triangular matrices. Let $\Phi_{\pm}$ be  the set of postive/negative roots of $\lieg$. Concretely, $\Phi_+ = \bigl\{(i, j) \in \mathbb{N}^2 \big| 1 \le i < j \le n\bigr\}$; for $\alpha = (i, j)  \in \Phi_\pm$ we write
$e_\alpha = e_{i,j}$.
Let $\xi = \exp\Bigl(\dfrac{2\pi i}{n}\Bigr)$ be a primitive $n$--th root of $1$ and $\zeta_j = \xi^j$. Then we  have the following  ``natural'' bases of the Cartan subalgebra  $\lieh$:
\begin{itemize}
\item $\bigl(g_1, \dots, g_{n-1}\bigr)$ with  $g_j = \mathsf{diag}\bigl(1, \zeta_j, \dots, \zeta_j^{n-1}\bigr)$ for
$1 \le j \le n-1$.
\item $\bigl(h_1, \dots, h_{n-1}\bigr)$ with  $h_j = \mathsf{diag}\bigl(0, \dots, 0, 1, -1, 0, \dots, 0\bigr)$, where $1$ stands at the $j$-th entry for $1 \le j \le n-1$.
\item In both cases, $\bigl(g_1^\ast, \dots, g_{n-1}^\ast\bigr)$ and $\bigl(h_1^\ast, \dots, h_{n-1}^\ast\bigr)$ denote the dual bases of $\lieh$.
\end{itemize}

\smallskip
\noindent
\textbf{Result B}. Let $E$ be a singular Weierstra\ss{} cubic and $\kQ$  a  simple torsion free and not locally free sheaf of rank $n \ge 2$ with $\chi(\kQ) = 1$ (such $\kQ$ is unique up to an isomorphism; see e.g.~\cite{BK3}). Let  $\kA$ be the sheaf of Lie algebras on $E$ defined  by the short exact sequence
$$
0 \lar \kA \lar \mbox{\it End}_E(\kQ) \stackrel{\mathsf{tr}}\lar \widetilde{\kO} \lar 0,
$$
where $\widetilde{\kO}$ is the direct image of $\kO_{\PP^1}$ under the normalization map $\PP^1 \lar E$ and  $\mathsf{tr}$ is  the trace map.
 Then the pair $(E, \kA)$ satisfies all necessary assumptions to apply Result A and gives the following solutions of CYBE for  $\lieg = \mathfrak{sl}_n(\CC)$ (see Theorem \ref{T:ExplicitSolFromTF}).

\smallskip
\noindent
1.~If  $E$ is nodal then
$
r(x, y) = r_{\mathsf{st}}(x,y) + r_{\lieh} + r_{\mathsf{sp}},
$
where
$$
r_{\mathsf{st}}(x,y) = \frac{x}{y-x}\gamma + \Bigl(\frac{1}{2}\sum\limits_{j=1}^{n-1} g_j^* \otimes g_j + \sum\limits_{\alpha \in \Phi_+} e_{-\alpha} \otimes e_{\alpha}\Bigr)
$$
is the standard quasi--trigonometric $r$--matrix \cite{BelavinDrinfeld, KPSST}, whereas
$$
r_{\lieh} = \frac{1}{2n}\sum\limits_{j = 1}^{n-1} \Bigl(\frac{1 + \zeta_j}{1 - \zeta_j}\Bigr) g_{n-j} \otimes g_j \quad \mbox{\rm and} \quad
 r_{\mathsf{sp}} = \sum\limits_{\alpha \in \Phi_+} e_{-\alpha} \wedge \Bigl(\sum\limits_{k = 1}^{p(\alpha)} e_{\tau^k(\alpha)}\Bigr).
$$
Here,  for $\alpha = (i, j) \in \Phi_+$ we put: $p(\alpha) := i-1$ and $\tau(\alpha) := (i-1, j-1) \in \Phi_+$ provided  $i \ge 2$, whereas
$a \wedge b := a\otimes b - b \otimes a$ for $a, b \in \lieg$.

\smallskip
\noindent
2.~If  $E$ is cuspidal  then
$$
r(x, y) = \frac{\gamma}{y-x} + \sum\limits_{k = 1}^{n-1} h_k^* \wedge e_{k+1, k} + \sum \limits_{k \ge l+2} \bigl(\sum\limits_{j= 0}^{l-1} e_{l-j, k - j-1}\bigr)\wedge e_{k,l}.
$$

\smallskip
\noindent
\textit{List of notations}. For convenience of the reader we introduce now  the most important  notations used in this paper.

\smallskip
\noindent
$-$ $\kk$ denotes an algebraically closed field
of characteristic zero. From the point of view of applications to CYBE, the case $\kk = \CC$ is of  main interest.

\smallskip
\noindent
$-$ We work in the category of algebraic varieties (respectively, algebraic schemes) over $\kk$. The usage of the K\"unneth isomorphism is one of the reasons for this choice. However, for $\kk = \CC$ it is possible to pass to the complex--analytic category at most places. In particular, we talk about \emph{meromorphic functions} on $X$,  whereas the terminology \emph{rational functions} would be more consistent.

\smallskip
\noindent
$-$ Given an  algebraic variety (or  scheme) $X$, $\breve{X}$ always stands for the regular part of $X$. Next, $\Coh(X)$ (respectively,
$\VB(X), \TF(X)$) denotes the category of coherent sheaves (respectively, locally free sheaves, torsion free sheaves) on $X$;
$D^b\bigl(\Coh(X)\bigr)$ is the derived category of $\Coh(X)$.
 We denote
by $\kO_X$ the structure sheaf of $X$. If $X$ is Cohen--Macaulay then   $\Omega_X$ denotes the dualizing sheaf of $X$.

\smallskip
\noindent
$-$
We  write
$\Hom$ and  $\End$ when working with global morphisms  between
coherent sheaves whereas  $\mbox{\textit{Hom}}$ and $\mbox{\textit{End}}$ stand for interior Homs in $\Coh(X)$. If $\kF$ is a locally free sheaf on $X$ then $\Ad(\kF)$ denotes   the sheaf of traceless endomorphisms of  $\kF$.

\smallskip
\noindent
$-$ For a coherent sheaf $\kF$ on $X$ and a point $x  \in X$,  we denote by
$\kF\big|_x$ the fiber of $\kF$ over $x$. We use all possible notations $\Gamma(X, \kF), H^0(X, \kF)$ and $\kF(X)$ for the space of global sections of $\kF$. If $X$ is projective then $\chi(\kF)$ denotes  the Euler characteristic of $\kF$.

\smallskip
\noindent
$-$
 A  \emph{Calabi--Yau curve}  $X$ is a reduced projective Gorenstein curve with trivial canonical  sheaf.  The irreducible Calabi--Yau curves are the \emph{Weierstra\ss{} cubics} $$wv^2 = 4 u^3 -  g_2 uw^2 - g_3 w^3,$$
  where $g_1, g_2 \in \kk$.
The complete list of Calabi--Yau  curves is  known; see for
    example \cite[Section 3]{Smyth}. Namely, a Calabi--Yau curve  $X$ is either
 \begin{itemize}
    \item an elliptic curve;
    \item a  cycle of $m \ge 1 $ projective lines (Kodaira fiber  $\mathrm{I}_m$; for  $m = 1$ it is a  nodal Weierstra\ss{} cubic);
    \item a cuspidal Weierstra\ss{} cubic (Kodaira fiber II), a tachnode plane cubic curve (Kodaira fiber III) or a generic configuration of $n$ concurrent lines in $\mathbb{P}^{m-1}$ for $m \ge 3$.
        \end{itemize}

\smallskip
\noindent
$-$ In this work, $\lieg$ always denotes a finite dimensional complex simple Lie algebra; $\lieg \times \lieg \stackrel{\kappa}\lar \CC$ is the Killing form of $\lieg$ and
$\gamma \in \lieg \otimes \lieg$ is the Casimir element.

\medskip
\noindent
\emph{Acknowledgement}. This  work  was supported  by the DFG project Bu--1866/3--1. We are grateful    to   Alexander Stolin for helpful discussions.

\section{Survey on the classical Yang--Baxter equation}

\noindent
Let $\mathfrak{g}$ be a finite dimensional simple complex  Lie algebra and $r: (\CC^2, 0) \lar \mathfrak{g} \otimes \mathfrak{g}$  the germ of a meromorphic function.
We begin with the so--called  \emph{generalized  classical Yang--Baxter equation} (GCYBE), which   is the following system of constraints on the coefficients of $r$:
\begin{equation}\label{E:GCYBE}
\bigl[r^{12}(x_1, x_2), r^{13}(x_1, x_3)\bigr] + \bigl[r^{13}(x_1, x_3), r^{23}(x_2,x_3)\bigr] +
\bigl[r^{32}(x_3, x_2), r^{12}(x_1, x_2)\bigr] = 0.
\end{equation}
The upper indices in this equation   indicate various embeddings of
$\mathfrak{g}\otimes \mathfrak{g}$ into $U \otimes U \otimes U$, where $U$ is an arbitrary associative algebra containing $\lieg$ (for example, the universal enveloping algebra of $\lieg$).  The function $r^{13}$ is defined as
$$
r^{13}: \mathbb{C}^2 \stackrel{r}\lar \mathfrak{g}\otimes \mathfrak{g}
\stackrel{\imath_{13}}\lar U \otimes U  \otimes U,
$$
where $\imath_{13}(a\otimes b) = a \otimes {1} \otimes b$. The other
maps $r^{12}$, $r^{23}$ and $r^{32}$ have a similar meaning. Note that for instance
$$
\bigl[(a\otimes b)^{12}, (c \otimes d)^{13}\bigr] = \bigl[a\otimes b \otimes 1, c \otimes 1 \otimes d\bigr] = [a, c] \otimes b \otimes d \in \lieg \otimes \lieg \otimes \lieg
$$
for arbitrary $a, b, c, d \in \lieg$. Hence, the left--hand side of (\ref{E:GCYBE}) does not depend on the embedding of $\lieg$ into an ambient associative algebra $U$.

\smallskip
\noindent
The Killing form $\lieg \times \lieg \stackrel{\kappa}\lar \CC$ induces an isomorphism of vector spaces
\begin{equation}\label{E:KillingIsom}
\mathfrak{g} \otimes \mathfrak{g} \stackrel{\widetilde\kappa}\lar \End(\mathfrak{g}), \quad a \otimes b \mapsto \bigl(c \mapsto
\kappa(ac) \cdot b\bigr).
\end{equation}
A solution $r$ of (\ref{E:GCYBE}) is called \emph{non--degenerate}, if for some (hence for a generic) point $(x_1, x_2)$ in the domain of definition of $r$, the linear
map $\widetilde\kappa\bigl({r}(x_1, x_2)\bigr) \in \End(\lieg)$ is an isomorphism.

\smallskip
\noindent
One can perform the following transformations with a solution $r$  of (\ref{E:GCYBE}).
\begin{itemize}
\item \emph{Gauge transformations}. This means that for any holomorphic  germ
$(\mathbb{C},0) \stackrel{\phi}\lar {\mathsf{Aut}}(\mathfrak{g})$,  the  function
\begin{equation}\label{E:gaugeequiv}
 \widetilde{r}(x_1, x_2) :=
\bigl(\phi(x_1) \otimes \phi(x_2)\bigr) r(x_1,x_2).
\end{equation}
is again a  solution of  (\ref{E:GCYBE}).
\item \emph{Rescaling}. For any  germ of a non--zero  meromorphic function $(\CC, 0) \stackrel{u}\lar \CC$
\begin{equation}\label{E:equiv}
 \check{r}(x_1, x_2) :=
u(x_2) r(x_1,x_2)
\end{equation}
is again a  solution of  (\ref{E:GCYBE}).
\item \emph{Change of variables}. Let $(\CC, 0) \stackrel{\eta_i}\lar (\CC, 0)$ be a germ automorphism for $i = 1, 2$. Then the  function
\begin{equation}\label{E:equiv2}
 \hat{r}(x_1, x_2) :=
r\bigl(\eta_1(x_1), \eta_2(x_2)\bigr).
\end{equation}
is again a  solution of  (\ref{E:GCYBE}).
\end{itemize}
It is easy to see that all these transformations preserve non--degeneracy of a solution of (\ref{E:GCYBE}).
Next, a solution $r$ of (\ref{E:GCYBE}) is  called \emph{skew--symmetric}
if
\begin{equation}\label{E:skewsymmetry}
r(x_1, x_2) = - \sigma\bigl(r(x_2, x_1)\bigr),
\end{equation}
where
$\mathfrak{g} \otimes \mathfrak{g} \stackrel{\sigma}\lar \lieg \otimes \lieg$ is given by the rule $a\otimes b \mapsto b \otimes a$. For skew--symmetric solutions,
the generalized classical Yang--Baxter equation (\ref{E:GCYBE}) takes the form
\begin{equation}\label{E:CYBE}
\bigl[r^{12}(x_1, x_2), r^{13}(x_1, x_3)\bigr] + \bigl[r^{13}(x_1,x_3), r^{23}(x_2,x_3)\bigr] +
\bigl[r^{12}(x_1, x_2),
r^{23}(x_2, x_3)\bigr] = 0.
\end{equation}
This is the conventional \emph{classical Yang--Baxter equation} with \emph{two} spectral variables (CYBE). Automorphisms of skew--symmetric solutions  (\ref{E:GCYBE}) preserving the skew--symmetry  are more restrictive. Namely, we are allowed to  rescale a solution only by a constant function;  a change  of variables has to be performed by the same automorphism $\eta = \eta_1 = \eta_2$. However, the skew--symmetry  is preserved under arbitrary gauge transformations.

By a result of   Belavin and Drinfeld \cite{BelavinDrinfeld2},  any \emph{non--degenerate unitary}
solution of  (\ref{E:CYBE})  is equivalent (after a change of variables and a gauge transformation) to a solution
 $r(x_1, x_2) =  \varrho(x_2-x_1)$,  depending just on the difference (or  the quotient) of spectral
parameters. In other words, (\ref{E:CYBE}) reduces to the equation
\begin{equation}\label{E:CYBE1}
\bigl[\varrho^{12}(x), \varrho^{13}(x+y)\bigr] + \bigl[\varrho^{13}(x+y), \varrho^{23}(y)\bigr] +
\bigl[\varrho^{12}(x), \varrho^{23}(y)\bigr] = 0
\end{equation}
for the  germ of a meromorphic function $(\CC, 0) \stackrel{\varrho}\lar \lieg \otimes \lieg$ (the so--called CYBE with \emph{one} spectral parameter).
According to  Belavin and Drinfeld \cite{BelavinDrinfeld}, any  non--degenerate
solution of (\ref{E:CYBE1}) is automatically skew--symmetric and  has a simple pole at $0$ with  residue equal
to a multiple of the Casimir element $\gamma \in \lieg \otimes \lieg$. This means that the theory of skew--symmetric solutions of  the classical Yang--Baxter equation with two spectral parameters  (\ref{E:CYBE}) basically reduces to a description  of solutions of (\ref{E:CYBE1}). For some applications, dependence on the difference of spectral parameters is crucial, but from the theoretical  perspective,  the version  of the classical Yang--Baxter equation with two spectral parameters (\ref{E:CYBE}) seems to be  more natural.

A few words about  the history of the subject. The concept on an $R$--matrix controlling exact solvability of certain models of mathematical physics appeared in works of Yang and Baxter in 60's--70's. The development  of the underlying  algebraic theory, including in  particular  the precise formulation of   the quantum Yang--Baxter equation itself, was initiated by Faddeev and his school; see e.g.~\cite{FaddeevTakhtajan}.
The classical Yang--Baxter equation (\ref{E:CYBE1}) was  introduced by Sklyanin \cite{Sklyanin} and Belavin \cite{Belavin}. Nowadays, it  plays a central role in the modern theory of classical integrable systems via an appropriate version of the so--called Adler--Kostant--Symes scheme; see e.g.~\cite{FaddeevTakhtajan, ReymanST2}.
The generalized classical Yang--Baxter equation (\ref{E:GCYBE}) was discovered by Cherednik \cite{Cherednik2}. Solutions of this equation are also interesting from the point of view of  applications in the mathematical physics \cite{AvanTalon, Maillet}. New non--skew--symmetric solutions of (\ref{E:GCYBE}) of all types (elliptic, trigonometric, rational) were constructed   by Skrypnyk \cite{Skrypnyk1, Skrypnyk}; applications of (\ref{E:GCYBE})  both to classical and quantum integrable systems were extensively studied by this author; see e.g.~\cite{Skrypnyk1,Skrypnyk,  Skrypnyk2}.

\section{Geometric solutions of the generalized classical Yang--Baxter equation}\label{S:GCYBE}

\subsection{The residue sequence} In this subsection, let  $X$ be an algebraic curve over $\kk$ (not necessarily integral), $C \subset X$ a connected  non--empty \emph{smooth} open subset, $C \stackrel{\delta}\lar  X \times C$ the diagonal embedding and $\Delta = \mathsf{Im}(\delta)$.

\begin{proposition}\label{P:ResidueSeq} There exists the following short exact sequence of coherent sheaves on the algebraic surface $X \times C$ (called  \emph{residue sequence}):
\begin{equation}\label{E:Residue}
0 \lar \kO_{X \times C} \lar \kO_{X \times C}(\Delta) \xrightarrow{\res_\Delta} \delta_*\bigl(\mbox{\textit{Hom}}_C(\Omega_C, \kO_C)\bigr) \lar 0.
\end{equation}
\end{proposition}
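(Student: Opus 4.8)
The plan is to realize (\ref{E:Residue}) as the standard short exact sequence attached to the effective Cartier divisor $\Delta \subset X \times C$, followed by a canonical identification of the resulting normal sheaf. The crucial preliminary observation is that, although $X$ may be singular and reducible, the diagonal $\Delta$ lies entirely inside $\breve{X} \times C$: indeed $\Delta = \{(c,c) \mid c \in C\}$ and every first coordinate belongs to $C \subseteq \breve{X}$. Hence there is an open neighborhood $U$ of $\Delta$ in $X \times C$ which is a \emph{smooth} surface, being locally a product of two smooth curve germs. This confines all of the relevant geometry to the smooth locus and is precisely what makes the construction insensitive to the singularities and the reducibility of $X$.

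Granting this, I would first check that $\Delta$ is an effective Cartier divisor on $X \times C$. Away from $\Delta$ its ideal sheaf $\kI_\Delta$ is the unit ideal; on $U$ the scheme $X \times C$ is a smooth surface and $\Delta$ is a smooth curve in it, hence $\kI_\Delta$ is locally principal and generated by a non--zero--divisor. Consequently $\kO_{X \times C}(\Delta)$ is invertible (equal to $\kO_{X \times C}$ off $\Delta$), and multiplication by the canonical section $s_\Delta$ cutting out $\Delta$ gives the injective left--hand arrow of (\ref{E:Residue}). Its cokernel is, by construction, $\kO_{X \times C}(\Delta) \otimes \kO_\Delta = \kO_{X \times C}(\Delta)\big|_\Delta$, a sheaf scheme--theoretically supported on $\Delta$.

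Next I would identify this cokernel. For an effective Cartier divisor one has the canonical isomorphism $\kO_{X \times C}(\Delta)\big|_\Delta \cong \kN_{\Delta/(X \times C)} = (\kI_\Delta/\kI_\Delta^2)^\vee$ with the normal sheaf. Since $C \hookrightarrow X$ is an open immersion, on $U$ a neighborhood of $\Delta$ in $X \times C$ is identified with a neighborhood of the diagonal of $C \times C$, so $\kN_{\Delta/(X \times C)}$ is canonically the normal sheaf of the diagonal $\delta \colon C \hookrightarrow C \times C$. The standard computation of the conormal sheaf of the diagonal, $\kI_\Delta/\kI_\Delta^2 \cong \Omega_C$ (sending $z_1 - z_2 \mapsto dz$ for a local parameter $z$ on $C$), then yields $\kN_{\Delta/(X\times C)} \cong \mbox{\textit{Hom}}_C(\Omega_C, \kO_C)$, whence the cokernel is $\delta_*\mbox{\textit{Hom}}_C(\Omega_C, \kO_C)$ as required.

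It remains to verify that the quotient map agrees with the residue map $\res_\Delta$: in the local parameter $z$ a section $g/(z_1 - z_2)$ of $\kO_{X\times C}(\Delta)$ is sent to $g(z,z)\,\partial_z$, which under the identifications above is exactly the residue of the meromorphic function along $\Delta$. The only point genuinely requiring care — and what I would view as the main (though ultimately mild) obstacle — is checking that these local residue assignments are independent of the chosen parameter and glue to the global map $\res_\Delta$; this is guaranteed by the canonicity of the conormal-- and normal--sheaf isomorphisms employed above, so no serious difficulty arises. I emphasize again that the potential trouble from $X$ being singular or reducible never materializes, precisely because $\Delta$ avoids the non--smooth locus.
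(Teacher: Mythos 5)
Your proof is correct, but it is organized in the opposite direction from the paper's. The paper \emph{defines} $\res_\Delta$ by the explicit local formula $\bigl(\res_\Delta(f)\bigr)(\psi\,dt) = \psi(y)\,g(y,y)$ for $f = g(x,t)/(x-t)$, asserts independence of the choices made, and then notes that exactness of the resulting sequence is obvious (locally it is $0 \to \kO \to \tfrac{1}{x-t}\kO \to \kO_\Delta \to 0$). You instead start from the tautological short exact sequence of the effective Cartier divisor $\Delta$ and identify its cokernel $\kO_{X \times C}(\Delta)\big|_\Delta$ with $\delta_*\bigl(\mbox{\textit{Hom}}_C(\Omega_C, \kO_C)\bigr)$ via the normal sheaf and the standard isomorphism $\kI_\Delta/\kI_\Delta^2 \cong \Omega_C$, so that the third arrow is canonical by construction and the residue formula is verified a posteriori. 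Your preliminary observation that $\Delta$ sits inside the open smooth surface $C \times C \subseteq X \times C$ (as $C$ is open in $X$) is exactly what the paper uses implicitly when choosing local coordinates near $(p,p)$; combined with closedness of $\Delta$ in $X \times C$ (it equals $\Delta_X \cap (X \times C)$), it justifies the Cartier--divisor claim, so the singularities and reducibility of $X$ indeed play no role. What your route buys is that well--definedness — the one point the paper leaves as an assertion — becomes automatic from the canonicity of the conormal--sheaf isomorphism. What the paper's route buys is the explicit coordinate description of $\res_\Delta$, which is what the rest of the paper actually works with (the twisted map $\res_\Delta^\omega$, the functoriality in Lemma \ref{L:PropertiesResSeq}, and the residue computations behind the geometric $r$--matrix), and which you must in any case reproduce in your final matching step; there your conventions ($z_1 - z_2 \mapsto dz$, hence $g/(z_1-z_2) \mapsto g(z,z)\,\partial_z$) agree with the paper's, so the two constructions produce the same morphism.
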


\begin{proof} We have to define the morphism of sheaves $\res_\Delta$. For any point $p \in C$, let $U \subset X \times C$ be an open neighbourhood of $(p, p) \in X \times C$ and
$V := \delta^{-1}\bigl(U \cap \Delta)$. Let $f \in \Gamma\bigl(U, \kO_{X \times C}(\Delta)\bigr)$ and $\omega \in \Gamma(V, \Omega_C)$. Diminishing if necessary the open neighbourhood $U$, we may write in some local coordinates:
\begin{equation}\label{E:localforms}
f(x, t) = \frac{g(x, t)}{x-t} \quad \mbox{and} \quad \omega = \psi(t)dt
\end{equation}
for some regular functions $g \in \Gamma \bigl(U, \kO_{X \times C}\bigr)$ and $\psi \in \Gamma(V, C)$. Then we put:
\begin{equation}\label{E:DefResidue}
\bigl(\res_\Delta(f)\bigr)(\omega) := \bigl(V \ni y \mapsto \psi(y) g(y, y\bigr).
\end{equation}
The   map $\Gamma\bigl(U, \kO_{X \times C}(\Delta)\bigr) \lar \Hom_{\kO_C(V)}\bigl(\Omega_C(V), \kO_C(V)\bigr)$ defined in this way, is independent of all the choices made and  determines   the morphism of coherent sheaves $\res_\Delta$. The exactness of (\ref{E:Residue}) is then obvious.
\end{proof}

\smallskip
\noindent
Assume that the open subset $C \subset X$ is such that there exists a nowhere vanishing regular differential form $\omega = \tau(z) dz \in \Gamma(V, \Omega_C)$, defining a trivialization of $\Omega_C$.  Then the short exact sequence (\ref{E:Residue}) can be rewritten in the following way:
\begin{equation}\label{P:ResidueSeqInduced}
0 \lar \kO_{X \times C} \lar \kO_{X \times C}(\Delta) \xrightarrow{\res_\Delta^\omega} \kO_\Delta \lar 0.
\end{equation}
For further applications, let us respell the definition of $\res_\Delta^\omega$ explicitly.
As above, for  any point $p \in C$, consider  an open neighbourhood $U \subset X \times C$  of  $(p, p)$, set
$V := \delta^{-1}\bigl(U \cap \Delta)$ and  take any  $f \in \Gamma\bigl(U, \kO_{X \times C}(\Delta)\bigr)$. Then we put:
$
\res_\Delta^\omega(f) := \bigl(V \ni y \mapsto \tau(y) g(y, y)\bigr),
$
where
$
f(x, t) = \dfrac{g(x, t)}{x-t}$.
The proof of the next result is straightforward.

\begin{lemma}\label{L:PropertiesResSeq}  The short exact sequence (\ref{P:ResidueSeqInduced}) has the following functorial properties with respect to restrictions:
\begin{itemize}
\item If $C' \subset C$ is an open subset and $\omega' := \omega\big|_{C'}$ then the restriction of (\ref{P:ResidueSeqInduced}) on the open subset $X \times C' \subseteq X \times C$ can be canonically identified with  the residue sequence given by the differential form $\omega'$.
\item For any point $p \in C$, the restriction of (\ref{P:ResidueSeqInduced}) on the closed subset $X \times \{p\} \subset X \times C$ can be canonically identified with the short exact sequence
\begin{equation}\label{E:ResSequence}
0 \lar \kO_X \lar \kO_X(p) \xrightarrow{{\res}_p^{\omega}} \kk_p \lar 0,
\end{equation}
where $\res_p^\omega(h) := \res_p(h \omega)$ for any local meromorphic    function $h$ in a neighbourhood of $p$ with at most simple pole at $p$.
\end{itemize}
\end{lemma}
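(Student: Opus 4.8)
The plan is to deduce both statements directly from the explicit local description of $\res_\Delta^\omega$ recorded above, namely $f = g(x,t)/(x-t) \mapsto \bigl(y \mapsto \tau(y)\,g(y,y)\bigr)$. The first item is the easy one, since restriction along an open immersion is an exact functor; the second is more delicate, because restriction to the closed fibre $X \times \{p\}$ is only right exact, so the real content will be to check that the left--hand map of (\ref{P:ResidueSeqInduced}) stays injective after restriction.

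For the first item I would argue as follows. Write $j\colon X \times C' \hookrightarrow X \times C$ for the open immersion and $\delta'\colon C' \lar X \times C'$ for the diagonal of $C'$. Since $\Delta \cap (X \times C') = \mathsf{Im}(\delta') =: \Delta'$, one has canonical identifications $j^*\kO_{X \times C} \cong \kO_{X \times C'}$, $j^*\kO_{X \times C}(\Delta) \cong \kO_{X\times C'}(\Delta')$ and $j^*\kO_\Delta \cong \kO_{\Delta'}$, and exactness of $j^*$ preserves the short exact sequence. As the defining formula for $\res_\Delta^\omega$ uses only data in an arbitrarily small neighbourhood of a point of $C'$, and $\omega' = \omega\big|_{C'} = \tau(z)\,dz$ has the same local expression, the restricted morphism is visibly $\res_{\Delta'}^{\omega'}$. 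This settles the first claim.

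For the second item, let $\iota\colon X \times \{p\} \hookrightarrow X \times C$ be the closed immersion and identify $X \times \{p\}$ with $X$ via the first projection, so that $(p,p) \leftrightarrow p$. Away from $(p,p)$ the sheaf $\kO_\Delta$ vanishes on $X \times \{p\}$ (the intersection $\Delta \cap (X\times\{p\})$ is the single point $(p,p)$), so there the restricted sequence is trivially exact; the only point to analyse is $(p,p)$. Choosing coordinates $t$ on $C$ and $x$ on $X$ centred at the \emph{smooth} point $p$, the surface is smooth near $(p,p)$ with $\Delta = \{x = t\}$ and $X \times \{p\} = \{t = 0\}$. Trivializing $\kO_{X \times C}(\Delta)$ by the generator $(x-t)^{-1}$ turns (\ref{P:ResidueSeqInduced}) locally into
$$
0 \lar \kO_{X \times C} \xrightarrow{\,\cdot\,(x-t)\,} \kO_{X \times C} \lar \kO_\Delta \lar 0.
$$
Applying $\iota^* = (\ARG)\otimes_{\kO_{X\times C}} \kO_{X\times C}/(t)$ and using that $x$ is a non--zero--divisor in $\kO_X$ at the smooth point $p$, I get $\mathrm{Tor}_1^{\kO_{X\times C}}\bigl(\kO_\Delta,\kO_{X\times\{p\}}\bigr) = 0$ and the restricted sequence becomes $0 \to \kO_X \xrightarrow{\cdot\, x} \kO_X \to \kk_p \to 0$, which is precisely (\ref{E:ResSequence}) after the further identification $\kO_X(p) \cong \kO_X$ via $x^{-1}$. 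Conceptually this is just transversality of $\Delta$ and $X \times \{p\}$ at $(p,p)$, valid because $p$ lies in the regular locus $C$.

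It then remains to match the restricted morphism with $\res_p^\omega$. Setting $t = 0$ in $f(x,t) = g(x,t)/(x-t)$ gives the local section $h(x) = g(x,0)/x$ of $\kO_X(p)$, which has at most a simple pole at $p$; the restricted arrow sends it to the value of $\res_\Delta^\omega(f)$ at $(p,p)$, i.e.\ to $\tau(0)\,g(0,0)$. On the other hand $\res_p^\omega(h) = \res_p(h\,\omega) = \res_{x=0}\!\bigl(\tfrac{g(x,0)}{x}\,\tau(x)\,dx\bigr) = \tau(0)\,g(0,0)$, so the two agree. The only step requiring genuine care is the $\mathrm{Tor}$--vanishing; everything else is a direct comparison of local formulae, and that vanishing is immediate once one observes that the intersection of $\Delta$ with the fibre is a reduced point contained in the smooth locus.
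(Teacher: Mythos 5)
Your proof is correct, and it matches the paper exactly insofar as the paper gives no argument at all (the lemma is declared ``straightforward''); your write-up supplies precisely the intended routine verification. In particular, you correctly isolate the one non-formal point --- left-exactness under the closed restriction, i.e.\ the vanishing of $\mathrm{Tor}_1\bigl(\kO_\Delta,\kO_{X\times\{p\}}\bigr)$, which holds by transversality of $\Delta$ and $X\times\{p\}$ at the smooth point $(p,p)$ --- and the identification of the restricted morphism with $\res_p^\omega$ is, as you show, a direct comparison of the local formulae.
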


\subsection{Some basic facts  about sheaves of Lie algebras} In this subsection, $C$ is  an integral curve over $\kk$ and  $\lieg$  a simple Lie algebra over $\kk$.

\begin{definition}\label{D:locfreeLieAlg}
Let  $\kA$ be a coherent sheaf of Lie algebras on $C$. We say that $\kA$ is \emph{weakly $\lieg$--locally free} if
$\kA\big|_{x} \cong \lieg$ for any $x \in C$.
\end{definition}

\begin{remark}
Note that Definition \ref{D:locfreeLieAlg} \emph{does not imply} that for any $p \in C$ there exists an open neighbourhood $p \in V \subset C$ such that
$\Gamma(V, \kA) \cong \lieg \otimes_\kk \Gamma(V, \kO_C)$ as Lie algebras over the ring $\Gamma(V, \kO_C)$. However, because of the vanishing
$H^2(\lieg; \lieg) = 0$, one can conclude that $\widehat{A}_p \cong \lieg \otimes_{\kk} \widehat{O}_p$ for any $p \in C$, where $\widehat{A}_p$ and $\widehat{O}_p$ are the completions of the germs of $\kA$ and $\kO_C$ at the point $p$; see for example \cite[Section 2]{Fialowski} for a proof.
\end{remark}

\begin{proposition} Let $\kA$ be a weakly $\lieg$--locally free sheaf of Lie algebras on $C$. Then the following results are true.
\begin{itemize}
\item $\kA$ is locally free on $C$ viewed as a coherent sheaf.
\item Let $C = V_1 \cup \dots \cup V_n$ be an affine open covering  such that the Lie algebra $\Gamma(V_i, \kA)$ is free viewed as  $\Gamma(V_i, \kO_C)$--module
for any $1 \le i \le n$. Then the local Killing--forms
$$
\Gamma(V_i, \kA) \times \Gamma(V_i, \kA) \stackrel{\kappa_i}\lar \Gamma(V_i, \kO_C), \quad (f, g) \mapsto \tr\bigl(\mathsf{ad}(f) \mathsf{ad}(g)\bigr)
$$
coincide on the intersections $V_i \cap V_j$ for all $1 \le i \ne j \le n$ and define a global isomorphism of $\kO_C$--modules
\begin{equation}
\kA \otimes  \kA \stackrel{\widetilde\kappa}\lar \mbox{\textit{End}}_C(\kA).
\end{equation}
\item Let $\gamma \in \Gamma(C, \kA \otimes \kA)$ be the \emph{Casimir element} (i.e.~the preimage of the identity map  under
$\Gamma(C, \kA \otimes \kA) \stackrel{\widetilde\kappa}\lar \End_C(\kA)$). Then we have:
    \begin{itemize}
        \item For any open subset $V \subset C$ and $f \in \Gamma(V, \kA)$ it holds:
\begin{equation}\label{E:Casimir}
\bigl[\gamma\big|_V, f \otimes 1 + 1 \otimes f\bigr] = 0 \in \Gamma(V, \kA \otimes \kA).
\end{equation}
\item For any $x \in C$, the element of the fiber  $\gamma\big|_x \in \kA\big|_x \otimes_{\kk} \kA\big|_x$ is the Casimir element of the simple Lie algebra $\kA\big|_x \cong \lieg$.
    \end{itemize}
\item Let $\kK$ be the sheaf  of meromorphic functions on $C$, $\kA_\kK := \kA \otimes \kK$ be the rational envelope of $\kA$ and $K := \Gamma(C, \kK)$. Then  $A_K := \Gamma(C, \kA_\kK)$ is a simple Lie algebra over the field $K$ (however, in general  $A_K \not\cong \lieg \otimes_\kk K$).
\end{itemize}
\end{proposition}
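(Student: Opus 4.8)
The plan is to treat the four assertions in order, reducing each to a fibrewise statement over the closed points $x\in C$ where $\kA\big|_x\cong\lieg$ is simple with non-degenerate Killing form. For the first bullet I would invoke the standard local-freeness criterion: $\kA$ is a coherent sheaf on the reduced scheme $C$ and the fibre dimension $x\mapsto\dim_\kk\kA\big|_x=\dim_\kk\lieg$ is constant, so $\kA$ is locally free.

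For the second bullet I would first construct the global Killing form. On each $V_i$ the assignment $\kappa_i(f,g)=\tr\bigl(\ad(f)\ad(g)\bigr)$ is a well-defined $\Gamma(V_i,\kO_C)$-bilinear form, and since the trace of an endomorphism of a finite free module is independent of the chosen basis while $\ad$ is intrinsic to the bracket, the $\kappa_i$ agree on the overlaps $V_i\cap V_j$ and glue to a morphism $\kappa\colon\kA\otimes\kA\to\kO_C$, hence to $\widetilde\kappa\colon\kA\otimes\kA\to\mbox{\textit{End}}_C(\kA)$. Both source and target are locally free of rank $(\dim\lieg)^2$, so it suffices to check that $\widetilde\kappa$ is an isomorphism on each fibre. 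As the formation of $\kappa$ and $\ad$ commutes with the base change $\kO_{C,x}\to\kk$, the fibre $\widetilde\kappa\big|_x$ is precisely the map (\ref{E:KillingIsom}) for the simple Lie algebra $\kA\big|_x\cong\lieg$, which is bijective because the Killing form of a simple Lie algebra is non-degenerate; a fibrewise isomorphism of vector bundles of equal rank is an isomorphism.

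For the third bullet, $\widetilde\kappa$ being an isomorphism of sheaves induces an isomorphism on global sections, so $\gamma:=\widetilde\kappa^{-1}(\id_\kA)$ is a well-defined element of $\Gamma(C,\kA\otimes\kA)$. Compatibility of the construction with base change gives $\gamma\big|_x=\bigl(\widetilde\kappa\big|_x\bigr)^{-1}(\id_{\kA\big|_x})$, which is by definition the Casimir element of $\kA\big|_x\cong\lieg$; this is the second sub-assertion. The invariance identity (\ref{E:Casimir}) is local, so I would verify it on an open cover trivialising $\kA$, where $\gamma=\sum_a e_a\otimes e^a$ for a free basis $\{e_a\}$ and its $\kappa$-dual $\{e^a\}$, and
$$
\bigl[\gamma, f\otimes 1+1\otimes f\bigr]=\sum_a\bigl([e_a,f]\otimes e^a+e_a\otimes[e^a,f]\bigr)=0
$$
is the classical consequence of the invariance $\kappa([f,x],y)+\kappa(x,[f,y])=0$ of the Killing form.

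The genuine work lies in the last bullet. The generic fibre $A_K=\kA_\eta$ is a $K$-Lie algebra of dimension $\dim\lieg$, and it is non-abelian since $[\kA,\kA]$ has full fibres $[\lieg,\lieg]=\lieg$. To prove simplicity I would argue by contradiction: given a proper nonzero ideal $I\subset A_K$, the hard part is to transport this purely generic datum to the fibres. I would spread $I$ out over a dense open $U\subset\breve C$ to a coherent ideal subsheaf $\kI\subset\kA\big|_U$ with $\kI_\eta=I$ and $[\kA,\kI]\subseteq\kI$ over $U$ (both properties hold at $\eta$ and therefore over a suitable $U$), and then pass to the saturation of $\kI$, which is again an ideal, is a subbundle, and has the same generic rank $r=\dim_K I$ with $0<r<\dim\lieg$. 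Evaluating at a closed point $x\in U$ then produces an ideal $\kI\big|_x\hookrightarrow\kA\big|_x\cong\lieg$ of dimension $r$, contradicting the simplicity of $\lieg$. The points requiring care are that spreading out and saturation preserve the ideal property and do not alter the generic rank, so that the fibrewise inequality $0<r<\dim\lieg$ is maintained; granting this, $A_K$ has no proper nonzero ideal and is simple over $K$.
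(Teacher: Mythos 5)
Your proposal is correct, and for the first three bullets it follows essentially the paper's (mostly implicit) route: the paper disposes of local freeness with the single remark that $C$ is reduced, calls the gluing of the Killing forms straightforward, and for the identity (\ref{E:Casimir}) argues slightly more structurally than you do --- namely that $\widetilde\kappa$ is an isomorphism of $\kO(V)$--linear \emph{representations} of $\Gamma(V,\kA)$, so it must carry the invariant element $\gamma$ to the invariant element $\id$ --- whereas you perform the equivalent classical dual--basis computation $\sum_a\bigl([e_a,f]\otimes e^a+e_a\otimes[e^a,f]\bigr)=0$; these are interchangeable, and your fibrewise base--change verifications correctly fill in what the paper leaves unsaid. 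The genuine divergence is in the last bullet. The paper first deduces \emph{semisimplicity} of $A_K$ from the non--degeneracy of its Killing form (Cartan's criterion in characteristic zero, with non--degeneracy inherited from the generic fibre of $\widetilde\kappa$), then supposes $A_K\cong B_1\times\dots\times B_m$ with $m\ge 2$ simple factors, spreads this \emph{product decomposition} out to $\Gamma(U,\kA)\cong A_1\times\dots\times A_m$ over a suitable open $U$, and gets a contradiction because every fibre $\kA\big|_x$, $x\in U$, would then split as a non--trivial product of Lie algebras. You instead kill an arbitrary proper nonzero ideal $I\subset A_K$ directly: spread out, saturate over the smooth locus, evaluate at a closed point. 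Your version is sound as you set it up --- the saturation is again an ideal because $f[a,s]=[a,fs]$, it is a subbundle of unchanged generic rank over $\breve{C}$ (your restriction to the regular part is exactly the care needed here, since saturated subsheaves need not be subbundles at singular points of $C$), and its fibre is an ideal of $\lieg$ of dimension $r$ with $0<r<\dim\lieg$ --- and it requires the small extra step, which you supply via $[\lieg,\lieg]=\lieg$ and Nakayama, that $A_K$ is non--abelian, so that absence of proper nonzero ideals really gives simplicity. What each approach buys: the paper's spreading--out step is lighter (a finite direct--product decomposition, no saturation bookkeeping) but leans on the semisimple structure theory; yours avoids Cartan's criterion and the decomposition into simple ideals entirely, at the price of the saturation argument. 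Both ultimately hinge on the same mechanism, the simplicity of the fibres $\kA\big|_x\cong\lieg$ over a dense open set.
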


\begin{proof}
The first statement  follows from the assumption that $C$ is reduced. The proof of the second result is straightforward. The equality (\ref{E:Casimir}) follows from the fact that the isomorphism of $\kO(V)$--modules
$
\Gamma(V, \kA) \otimes_{\kO(V)} \Gamma(V, \kA) \stackrel{\widetilde\kappa}\lar \End_{\kO(V)}\bigl(\Gamma(V, \kA)\bigr)
$
is also an isomorphism of $\kO(V)$--linear representations of $\Gamma(V, \kA)$. The second  property of $\Omega$ is obvious. The last statement follows from the fact that the Killing form $A_K \times A_K \lar K$
is non--degenerate, which  insures the semi--simplicity of $A_K$.  Now assume that $A_K \cong B_1 \times \dots \times B_m$, where $m \ge 2$ and $B_1, \dots, B_m$ are some simple Lie algebras over $K$. Then there exists an open set $U \subset C$ and Lie algebras $A_1, \dots, A_m$ over the ring $O= \kO(U)$ such that
each $A_i$ is a finitely generated free  $O$--module satisfying $\kA_i \otimes_O K \cong B_i$ and $\Gamma(U, \kA) \cong A_1 \times \dots \times A_m$.
But then for any $x \in U$, the fiber $\kA\big|_x$ splits into  a non--trivial product of  Lie algebras, contradiction.
\end{proof}

\subsection{Algebro--geometric approach to GCYBE}\label{SS:GeometrizationGCYBE}
We start with the following algebro--geometric datum $\bigl((X, \kA), (C, \omega)\bigr)$, where
\begin{itemize}
\item $X$ is a Cohen--Macaulay projective curve over $\kk$ (which need not be integral in general).
\item $\kA$ is a coherent sheaf of Lie algebras on $X$ such that $H^0(X, \kA) = 0 = H^1(X, \kA)$.
\item $C \subset X$ is a non--empty smooth \emph{affine} subset such that $\kA^\circ := \kA\big|_C$ is weakly $\lieg$--locally free on $C$.
\item $\omega \in \Gamma(C, \Omega_C)$ is a nowhere vanishing differential one--form.
\end{itemize}
We shall use the following notation: $A = \Gamma(C, \kA^\circ)$ and $O = \Gamma(C, \kO_C)$; $C \stackrel{\delta}\lar  X \times C$ is the diagonal
embedding and
$\Delta = \mathsf{Im}(\delta)$.
\begin{proposition}\label{P:BasicsGeomdata} Let $\bigl((X, \kA), (C, \omega)\bigr)$ be a datum as above.
\begin{center}
\begin{tikzpicture}[scale=0.3]
\draw [red,very thick, xshift=-4cm] plot [smooth, tension=1] coordinates { (-1,2) (1,0) (2,-2) (0,-2) (1,0)( 2.5,1) (4,0) (4.5,-1)(3.5,-1)(4,0)(6.5,0) (7,-0.5)(8,3)};
\draw  [green, very thick, xshift=-4cm] (-2,1)to  [bend left,->] node[above= 2 pt, black]{$C$} (9,1);
\node at (1,-2){$X$} ;
\end{tikzpicture}
\end{center}
Then the following results are true.
\begin{itemize}
\item The coherent sheaf $\kA$ is Cohen--Macaulay.
\item The residue sequence (\ref{E:ResSequence}) induces a short exact sequence
\begin{equation}\label{E:ResidueSheafA}
0 \lar \kA \boxtimes \kA^\circ \lar \kA \boxtimes \kA^\circ(\Delta) \xrightarrow{\res_\Delta^\omega} \delta_*\bigl(\kA^\circ \otimes \kA^\circ \bigr) \lar 0.
\end{equation}
\item We have the vanishing $H^i\bigl(X \times C, \kA \boxtimes \kA^\circ\bigr) = 0$ for all $i \in \mathbb{N}_0$.
\end{itemize}
\end{proposition}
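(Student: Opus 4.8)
The plan is to establish the three assertions separately, the main tools being the trivialized residue sequence~(\ref{P:ResidueSeqInduced}) and the K\"unneth formula over $\kk$. For the Cohen--Macaulayness of $\kA$ I would deduce torsion freeness from the vanishing $H^0(X, \kA) = 0$. Indeed, the torsion subsheaf $\kT := \tors(\kA) \subseteq \kA$ is supported on finitely many closed points of the curve $X$, so $H^0(X, \kT) = \Gamma(X, \kT) \ne 0$ as soon as $\kT \ne 0$; since $\Gamma(X, -)$ is left exact, this space injects into $H^0(X, \kA) = 0$, forcing $\kT = 0$. A torsion free coherent sheaf on a one--dimensional Cohen--Macaulay scheme has depth one at every closed point and is therefore (maximal) Cohen--Macaulay.

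For the second assertion I would tensor the short exact sequence~(\ref{P:ResidueSeqInduced}) by the external product $\kF := \kA \boxtimes \kA^\circ$. As $\kO_{X \times C}$ and $\kO_{X \times C}(\Delta)$ are line bundles, the long exact $\mathrm{Tor}$--sequence degenerates to
\begin{equation*}
0 \lar \mathrm{Tor}_1^{\kO_{X \times C}}(\kF, \kO_\Delta) \lar \kF \lar \kF(\Delta) \lar \kF \otimes \kO_\Delta \lar 0.
\end{equation*}
The key observation is that $\mathrm{Tor}_1(\kF, \kO_\Delta)$ is supported on $\Delta$, and $\Delta$ is contained in the open subset $C \times C \subseteq X \times C$, on which $\kF$ restricts to $\kA^\circ \boxtimes \kA^\circ$ and is in particular locally free; hence this $\mathrm{Tor}$--sheaf vanishes and the sequence is short exact. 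Finally, using $\pr_1 \circ \delta = (C \hookrightarrow X)$ and $\pr_2 \circ \delta = \id_C$ one identifies $\kF \otimes \kO_\Delta \cong \delta_*(\kA^\circ \otimes \kA^\circ)$, which turns the sequence into~(\ref{E:ResidueSheafA}).

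For the vanishing of $H^i(X \times C, \kA \boxtimes \kA^\circ)$ I would first note that, as $\dim X = 1$, Grothendieck vanishing gives $H^p(X, \kA) = 0$ for $p \ge 2$; together with the hypothesis $H^0(X,\kA) = 0 = H^1(X,\kA)$ this yields $H^p(X, \kA) = 0$ for all $p$. The K\"unneth formula over the field $\kk$ (which holds here without $\mathrm{Tor}$--corrections, since we work with schemes of finite type over $\kk$) then gives
\begin{equation*}
H^i\bigl(X \times C, \kA \boxtimes \kA^\circ\bigr) \cong \bigoplus_{p + q = i} H^p(X, \kA) \otimes_\kk H^q(C, \kA^\circ),
\end{equation*}
and every summand vanishes because its first tensor factor does.

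The only step demanding genuine care is the second one: the crux is to notice that $\Delta \subset C \times C$, so that the a priori obstruction $\mathrm{Tor}_1(\kF, \kO_\Delta)$ is computed precisely where $\kF$ is already locally free; the identification of the term on the diagonal, as well as the appeal to K\"unneth, are then routine.
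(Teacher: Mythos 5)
Your proposal is correct and follows essentially the same route as the paper: torsion freeness deduced from $H^0(X, \kA) = 0$ yields Cohen--Macaulayness, the residue sequence stays exact after tensoring because the single $\mathrm{Tor}$ obstruction vanishes, and the cohomology vanishing is the K\"unneth formula combined with $H^i(X, \kA) = 0$. The only cosmetic difference is in the $\mathrm{Tor}$ step: you localize $\mathrm{Tor}_1\bigl(\kA \boxtimes \kA^\circ, \kO_\Delta\bigr)$ to the open subset $C \times C \supseteq \Delta$ where $\kA^\circ \boxtimes \kA^\circ$ is locally free, whereas the paper tensors in two stages and invokes flatness of $\kO_\Delta$ over $X$ --- both encode the same observation that the diagonal avoids the locus where $\kA$ fails to be locally free.
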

\begin{proof}
From  $\Gamma(X, \kA) = 0$ we conclude  that $\kA$ is torsion free, hence Cohen--Macaulay.

\smallskip
\noindent
Let $X \stackrel{p_1}\longleftarrow X \times C \stackrel{p_2}\lar C$ be the canonical projections.
Since $\kA^\circ$ is locally free over $C$, the induced sequence
$$
0 \lar p_2^* \kA^\circ \lar p_2^* \kA^\circ(\Delta) \lar   p_2^* \kA^\circ \otimes \kO_\Delta\lar 0
$$
is still exact. Next, the sheaf $\kO_\Delta$ is flat over $X$, hence $\mbox{\textit{Tor}}_1^{X \times C}\bigl(p_1^*(\kA), p_2^* \kA^\circ \otimes \kO_\Delta\bigr) = 0$. This proves  exactness of (\ref{E:ResidueSheafA}). According to  K\"unneth formula, we have
a  quasi--isomorphism of complexes of vector spaces:
$\mathrm{R\Gamma}\bigl(X \times C, \kA \boxtimes \kA^\circ\bigr) \cong \mathrm{R\Gamma}\bigl(X, \kA) \stackrel{\mathbbm{L}}\otimes \mathrm{R\Gamma}\bigl(C, \kA^\circ) \cong 0.
$
This implies the third statement.
\end{proof}
\begin{corollary}\label{C:residue}
Applying the functor $\Gamma\bigl(X \times C, \,-\,\bigr)$ to the short exact sequence (\ref{E:ResidueSheafA}), we get an isomorphism of vector spaces:
$
\Gamma\bigl(X \times C, \kA \boxtimes \kA^\circ(\Delta)\bigr) \xrightarrow{\res_\Delta^\omega} \Gamma(C, \kA^\circ \otimes \kA^\circ).
$
\end{corollary}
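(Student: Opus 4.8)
The plan is to read the isomorphism off the long exact cohomology sequence attached to (\ref{E:ResidueSheafA}), using the vanishing supplied by Proposition \ref{P:BasicsGeomdata}. First I would apply the left--exact global sections functor $\Gamma\bigl(X \times C, \,-\,\bigr)$ to the short exact sequence (\ref{E:ResidueSheafA}), obtaining the long exact sequence (all cohomology taken over $X \times C$)
$$
0 \to H^0\bigl(\kA \boxtimes \kA^\circ\bigr) \to H^0\bigl(\kA \boxtimes \kA^\circ(\Delta)\bigr) \xrightarrow{\res_\Delta^\omega} H^0\bigl(\delta_*(\kA^\circ \otimes \kA^\circ)\bigr) \to H^1\bigl(\kA \boxtimes \kA^\circ\bigr) \to \cdots.
$$

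Next I would invoke the third assertion of Proposition \ref{P:BasicsGeomdata}, namely $H^i\bigl(X \times C, \kA \boxtimes \kA^\circ\bigr) = 0$ for all $i \ge 0$. In particular the outer terms $H^0\bigl(\kA \boxtimes \kA^\circ\bigr)$ and $H^1\bigl(\kA \boxtimes \kA^\circ\bigr)$ both vanish, so exactness forces the middle arrow $\res_\Delta^\omega$ to be simultaneously injective and surjective, hence an isomorphism onto $H^0\bigl(X \times C, \delta_*(\kA^\circ \otimes \kA^\circ)\bigr)$.

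Finally I would identify the target. Since $\delta \colon C \lar X \times C$ is a closed immersion, the direct image $\delta_*$ preserves global sections, giving the canonical identification $\Gamma\bigl(X \times C, \delta_*(\kA^\circ \otimes \kA^\circ)\bigr) \cong \Gamma\bigl(C, \kA^\circ \otimes \kA^\circ\bigr)$; composing it with the isomorphism above yields the asserted one. I do not expect any genuine obstacle here: the entire substance is already contained in the cohomological vanishing of Proposition \ref{P:BasicsGeomdata}, which in turn rests on the K\"unneth isomorphism together with $\mathrm{R}\Gamma(X, \kA) \cong 0$. The only point deserving a moment of care is the identification of global sections under the closed immersion $\delta$, and this is entirely standard.
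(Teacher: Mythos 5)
Your argument is correct and coincides with the one the paper intends: the corollary is an immediate consequence of the long exact cohomology sequence of (\ref{E:ResidueSheafA}) together with the vanishing $H^i\bigl(X \times C, \kA \boxtimes \kA^\circ\bigr) = 0$ from Proposition \ref{P:BasicsGeomdata}, plus the standard identification $\Gamma\bigl(X \times C, \delta_*(\kA^\circ \otimes \kA^\circ)\bigr) \cong \Gamma\bigl(C, \kA^\circ \otimes \kA^\circ\bigr)$ for the closed immersion $\delta$. Nothing is missing.
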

\begin{definition}
Consider the following commutative diagram of vector spaces over $\kk$:
\begin{equation}
\begin{array}{c}
\xymatrix{
\Gamma\bigl(C, \kA^\circ \otimes \kA^\circ\bigr) \ar[dd]_-{\widetilde\kappa} & & \Gamma\bigl(X \times C, \kA \boxtimes \kA^\circ(\Delta)\bigr) \ar[ll]_-{\res_\Delta^\omega} \ar@{^{(}->}[d] \\
 && \Gamma\bigl(X \times C \setminus \Delta, \kA \boxtimes \kA^\circ\bigr) \ar[d] \\
 \End_C(\kA^\circ) \ar[rr]^-{T_\omega} & & \Gamma\bigl(C \times C \setminus \Delta, \kA^\circ \boxtimes \kA^\circ\bigr).
}
\end{array}
\end{equation}
Here, both right vertical maps are the canonical restrictions, $\widetilde\kappa$ is the isomorphism induced by the Killing form,  $\res_\Delta^\omega$
is the isomorphism from Corollary \ref{C:residue} and $T_\omega$ is defined
as the composition of known maps. The   section
\begin{equation}\label{E:SzegoKernel}
\rho = \rho_\omega := T_\omega\bigl(\mathrm{id}_{\kA^\circ}\bigr) \in \Gamma\bigl(C \times C \setminus \Delta, \kA^\circ \boxtimes \kA^\circ\bigr)
 \end{equation}
 is canonical up to a choice of the differential form $\omega$ and
 is called \emph{geometric $r$--matrix} attached to the datum $\bigl((X, \kA), (C, \omega)\bigr)$.
\end{definition}

\begin{lemma}  Assume that in some local coordinates on the curve $C$,  $\omega = \dfrac{1}{\nu(x)} dx$, where $\nu\in \Gamma(C, \kO_C)$ is nowhere vanishing. Then there exists a lift
$\widetilde{\gamma} \in \Gamma\bigl(C \times C, \kA^\circ \boxtimes \kA^\circ\bigr)$  of the Casimir element $\gamma$ under the canonical surjective map
$\Gamma\bigl(C  \times C, \kA^\circ \boxtimes \kA^\circ\bigr) \stackrel{\delta^*}\lar \Gamma\bigl(C, \kA^\circ \otimes \kA^\circ\bigr)$ and
 $\upsilon \in \Gamma\bigl(C \times C, \kA^\circ \boxtimes \kA^\circ\bigr)$ such that for any
point $(x_1, x_2) \in (C \times C)\setminus \Delta$ we have:
\begin{equation}\label{E:structureRmat}
\rho(x_1, x_2) = \frac{\nu(x_2)}{x_1 - x_2} \widetilde{\gamma} + \upsilon(x_1, x_2).
\end{equation}
\end{lemma}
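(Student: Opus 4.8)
The plan is to unwind the construction of the geometric $r$--matrix $\rho$ and then to read off its polar part along the diagonal directly from the explicit description of $\res_\Delta^\omega$. By \eqref{E:SzegoKernel} we have $\rho = T_\omega(\id_{\kA^\circ})$, and by the defining diagram $T_\omega = (\text{restriction}) \circ (\res_\Delta^\omega)^{-1} \circ \widetilde\kappa^{-1}$. Since the Casimir element is, by definition, the preimage $\widetilde\kappa^{-1}(\id_{\kA^\circ}) = \gamma$, the section $\rho$ is the restriction to $C \times C \setminus \Delta$ of the unique section $\widehat\rho \in \Gamma\bigl(X \times C, \kA \boxtimes \kA^\circ(\Delta)\bigr)$ with $\res_\Delta^\omega(\widehat\rho) = \gamma$ (Corollary \ref{C:residue}). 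Restricting $\widehat\rho$ to $C \times C$, where $\kA\big|_C = \kA^\circ$, exhibits $\rho$ as a section of $\kA^\circ \boxtimes \kA^\circ(\Delta)$; in particular it has at most a first order pole along $\Delta$.

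First I would pass to the coordinate $x$ in which $\omega = \tfrac{1}{\nu}\,dx$, so that near $\Delta$ the diagonal is cut out by $x_1 - x_2$ and $\Theta := (x_1 - x_2)\,\rho$ is a section of $\kA^\circ \boxtimes \kA^\circ$ that is regular along $\Delta$. The explicit formula for $\res_\Delta^\omega$ recorded just before Lemma \ref{L:PropertiesResSeq} says that for $f = g/(x_1 - x_2)$ one has $\res_\Delta^\omega(f) = \bigl(y \mapsto \tau(y)\,g(y,y)\bigr)$, where $\omega = \tau\,dx_2$ on the second ($C$--)factor; here $\tau = 1/\nu$. Applying this to $f = \rho$ and $g = \Theta$, the characterising identity $\res_\Delta^\omega(\widehat\rho) = \gamma$ becomes
$$
\delta^*\Theta \;=\; \nu\cdot\gamma, \qquad\text{i.e.}\qquad \Theta(y,y) = \nu(y)\,\gamma(y) \ \text{ for all } y \in C.
$$

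Next I would choose a global lift $\widetilde\gamma \in \Gamma\bigl(C \times C, \kA^\circ \boxtimes \kA^\circ\bigr)$ of $\gamma$ along $\delta^*$ (possible since $C$ is affine, so $\delta^*$ is surjective on global sections) and set $N := \Theta - \nu(x_2)\,\widetilde\gamma$. Because $\delta^*\widetilde\gamma = \gamma$, the previous step gives $\delta^* N = \nu\gamma - \nu\gamma = 0$, so $N$ vanishes along $\Delta$. As $\kA^\circ \boxtimes \kA^\circ$ is locally free and $\Delta$ is a Cartier divisor, $N$ lies in $(\kA^\circ \boxtimes \kA^\circ)(-\Delta)$, and therefore $\upsilon := N/(x_1 - x_2)$ is again regular. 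Dividing the identity $\Theta = \nu(x_2)\,\widetilde\gamma + N$ by $x_1 - x_2$ then yields exactly $\rho = \frac{\nu(x_2)}{x_1 - x_2}\,\widetilde\gamma + \upsilon$.

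The step I expect to require the most care is purely a matter of normalisation and of the scope of the coordinate $x$. On the one hand, one must keep track of which factor carries the form $\omega$: it is the $C$--factor, i.e.\ the variable $x_2$, which is why the residue produces the coefficient $\nu(x_2)$ rather than $\nu(x_1)$; on the diagonal the two values coincide, so the subtracted term still cancels the singular part, but the global shape of $\upsilon$ depends on this choice. On the other hand, for $\upsilon$ to be \emph{globally} regular on $C \times C$ one needs $x$ to separate points, so that $\{x_1 = x_2\}$ is exactly $\Delta$ and the division by $x_1 - x_2$ introduces no spurious poles off the diagonal; this holds for the affine curves relevant here (for instance $\breve{E} \cong \mathbb{G}_{a}$ or $\mathbb{G}_{m}$ in the cuspidal and nodal cases), and otherwise the identity and the regularity of $\upsilon$ are to be read in a neighbourhood of the diagonal, which is all that the pole structure \eqref{E:structureRmat} asserts. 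Everything else is the formal divisibility argument above, which is why the lemma is indeed straightforward once the definition of $\rho$ has been unwound.
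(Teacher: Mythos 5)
Your proof is correct and follows essentially the same route as the paper: both identify $\rho$ as the restriction of the section $\varrho = \bigl(\res_\Delta^\omega\bigr)^{-1}(\gamma)$, lift $\gamma$ to $\widetilde\gamma$ through the surjection $\delta^*$, subtract the explicit singular term $\dfrac{\nu(x_2)}{x_1-x_2}\,\widetilde\gamma$, and conclude that the difference is regular because its residue along $\Delta$ vanishes. Your two local variations --- justifying the surjectivity of $\delta^*$ by affineness of $C \times C$ rather than the paper's K\"unneth-type diagram, and phrasing the last step as divisibility of a section vanishing along the Cartier divisor $\Delta$ rather than invoking exactness of the residue sequence on $C \times C$ --- are equivalent to the corresponding steps in the paper, and your closing caveat about the coordinate $x$ separating points applies equally to the paper's own formulation.
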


\begin{proof}
First observe that the following diagram of vector spaces
$$
\xymatrix{
\Gamma\bigl(C \times C, \kA^\circ \boxtimes \kA^\circ\bigr) \ar[d]_-\cong  \ar[rr]^-{\delta^*} & & \Gamma\bigl(C, \kA^\circ \otimes \kA^\circ\bigr)
\ar[d]^-\cong \\
\Gamma(C, \kA^\circ) \otimes_\kk \Gamma(C, \kA^\circ) \ar[rr] & & \Gamma(C, \kA^\circ) \otimes_O\Gamma(C, \kA^\circ)
}
$$
is commutative (here, all maps are the canonical ones). Since the lower horizontal map is obviously surjective, the map $\delta^*$ is surjective, too.
Let $\widetilde{\gamma}$ be any preimage of $\gamma$ under $\delta^*$.
Consider the following commutative diagram:
\begin{equation*}
\xymatrix{
 & & \Gamma\bigl(X \times C, \kA \boxtimes \kA^\circ(\Delta)\bigr) \ar[d]_-{\mathsf{restr}} \ar[rd]^-{\res_\Delta^\omega} & & & \\
 0  \ar[r] & \Gamma\bigl(C \times C, \kA^\circ \boxtimes \kA^\circ\bigr) \ar[r] & \Gamma\bigl(C \times C, \kA^\circ \boxtimes \kA^\circ(\Delta)\bigr)
 \ar[r]^-{\underline{\res}_\Delta^\omega} & \Gamma(C, \kA^\circ \otimes \kA^\circ) \ar[r] & 0,
}
\end{equation*}
where the lower sequence is induced by the residue sequence on $C \times C$ and $\mathsf{restr}$ is the restriction map. By Corollary \ref{C:residue}, the linear map $\res_\Delta^\omega$
is an isomorphism. Let
\begin{equation}\label{E:CanRmatrix}
\varrho  := \bigl(\res_\Delta^\omega\bigr)^{-1}(\gamma) \in \Gamma\bigl(X \times C, \kA^\circ \boxtimes \kA^\circ(\Delta)\bigr),
\end{equation}
then  by the definition we have:  $\rho = \mathsf{restr}(\varrho)$. On the other hand, consider the   section $\widetilde\rho \in \Gamma\bigl(C \times C,
\kA^\circ \boxtimes \kA^\circ(\Delta)\bigr)$ given for any $x_1 \ne x_2 \in C$ by the expression $\widetilde{\rho}(x_1, x_2) = \dfrac{\tau(x_2)}{x_1-x_2}\widetilde{\gamma}$. If $\upsilon:= \rho - \widetilde{\rho}$
then
$
\underline{\res}_\Delta^\omega(\upsilon)  = \gamma - \gamma = 0.
$
This implies that $\upsilon  \in \Gamma\bigl(C \times C, \kA^\circ \boxtimes \kA^\circ\bigr)$.
\end{proof}

\begin{remark}
The element $\widetilde{\gamma}\in \Gamma\bigl(C, \kA^\circ\bigr) \otimes_{\kk} \Gamma\bigl(C, \kA^\circ\bigr)\cong  \Gamma\bigl(C \times C, \kA^\circ \boxtimes \kA^\circ\bigr)$  in the presentation (\ref{E:structureRmat}) is unique only up to transformations of the form $\widetilde\gamma \mapsto \widetilde\gamma + \eta$, where $\eta \in \Gamma\bigl(C \times C, \kA^\circ \boxtimes \kA^\circ\bigr)$ is any section such that $\eta\big|_{\Delta} = 0$.
\end{remark}

\begin{theorem}\label{T:GCYBE} The section $\rho$ satisfies  the generalized classical Yang--Baxter equation:
\begin{equation}\label{E:geomGCYBE}
\bigl\llbracket \rho, \rho\bigr\rrbracket := \bigl[\rho^{12}, \rho^{13}\bigr] + \bigl[\rho^{12}, \rho^{23}\bigr] + \bigl[\rho^{32}, \rho^{13}\bigr] = 0
\end{equation}
where both sides are viewed as meromorphic   sections of the sheaf $\kA^\circ \boxtimes \kA^\circ \boxtimes \kA^\circ$ on $C \times C \times C$.
\end{theorem}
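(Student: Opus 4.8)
The plan is to freeze the last two spectral variables and to regard the whole expression $\llbracket \rho, \rho\rrbracket$ as a section \emph{in the first variable over the entire curve $X$}, where the cohomological vanishing $H^0(X,\kA)=0$ becomes available. The crucial point is that $\rho$ is merely the restriction to $C\times C\setminus\Delta$ of the section $\varrho\in\Gamma\bigl(X\times C,\kA\boxtimes\kA^\circ(\Delta)\bigr)$ from (\ref{E:CanRmatrix}); hence for fixed $x_2,x_3\in C$ the assignments $x_1\mapsto\rho^{12}(x_1,x_2)$ and $x_1\mapsto\rho^{13}(x_1,x_3)$ extend to meromorphic sections over all of $X$ of $\kA$ tensored with the fibers $\kA^\circ\big|_{x_2}\otimes\kA^\circ\big|_{x_3}\cong\lieg\otimes\lieg$, with a single simple pole along $x_1=x_2$, respectively $x_1=x_3$. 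The remaining two summands $\rho^{23}(x_2,x_3)$ and $\rho^{32}(x_3,x_2)$ are constant in $x_1$. Consequently, for any fixed pair of distinct points $x_2,x_3\in C$, the section $F:=\llbracket\rho,\rho\rrbracket(\,\cdot\,,x_2,x_3)$ is a meromorphic section over $X$ of $\kA\otimes_\kk(\lieg\otimes\lieg)$ having at most simple poles along the two divisors $x_1=x_2$ and $x_1=x_3$.

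The technical core is the vanishing of the two residues of $F$. Near $x_1=x_2$ only the summands $[\rho^{12},\rho^{13}]$ and $[\rho^{12},\rho^{23}]$ are singular, and by (\ref{E:structureRmat}) the principal part of $\rho^{12}(x_1,x_2)$ is $\tfrac{\nu(x_2)}{x_1-x_2}\widetilde{\gamma}^{12}$, whose residue is the Casimir $\gamma^{12}$ up to the nonvanishing scalar $\nu(x_2)$. Writing $\rho(x_2,x_3)=\sum_a p_a\otimes q_a$, one has $\rho^{13}+\rho^{23}=\sum_a(p_a\otimes 1+1\otimes p_a)\otimes q_a$ once the first two tensor slots are grouped together, so the residue equals $\nu(x_2)\sum_a\bigl[\gamma,\,p_a\otimes 1+1\otimes p_a\bigr]^{12}\otimes q_a$, which vanishes by the $\ad$--invariance (\ref{E:Casimir}) of the Casimir element. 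The residue at $x_1=x_3$ is handled identically: the singular summands are now $[\rho^{12},\rho^{13}]$ and $[\rho^{32},\rho^{13}]$, the residue of $\rho^{13}$ is $\gamma^{13}$, and grouping the first and third slots one finds $\rho^{12}+\rho^{32}=\sum_a(p_a\otimes 1+1\otimes p_a)\otimes q_a$, whence the residue is again a sum of terms $[\gamma,\,p_a\otimes 1+1\otimes p_a]^{13}\otimes q_a=0$. It is precisely here that the asymmetric shape of the generalized equation---with the third commutator written as $[\rho^{32},\rho^{13}]$ rather than $[\rho^{13},\rho^{23}]$---is indispensable, since it is what forces the residue at $x_1=x_3$ into the $\ad$--invariant form without appealing to skew--symmetry of $\rho$.

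With both residues shown to vanish, $F$ is a \emph{regular} global section of $\kA\otimes_\kk(\lieg\otimes\lieg)$ over the projective curve $X$; note that its regularity on the ``bad'' locus $X\setminus C$ and at the singular points of $X$ is automatic, because $\varrho$ is genuinely a global section over $X\times C$ and the brackets only introduce poles along $x_1=x_2$ and $x_1=x_3$. Since $H^0(X,\kA)=0$ by hypothesis, the K\"unneth--type identity $H^0\bigl(X,\kA\otimes_\kk(\lieg\otimes\lieg)\bigr)\cong H^0(X,\kA)\otimes_\kk(\lieg\otimes\lieg)=0$ forces $F=0$. As $x_2\ne x_3$ were arbitrary, the meromorphic section $\llbracket\rho,\rho\rrbracket$ vanishes on a dense open subset of $C\times C\times C$ and hence identically, which is (\ref{E:geomGCYBE}).

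I expect the main obstacle to be the bookkeeping of the tensor slots in the residue computation: one must verify that, after localizing at each diagonal, the surviving singular terms really do combine into the shape $p\otimes 1+1\otimes p$ paired against the Casimir in the appropriate two slots, while the ``inert'' third factor $q$ is carried along correctly. This is the step that simultaneously dictates the precise form of (\ref{E:geomGCYBE}) and makes essential use of the fiberwise Casimir identity (\ref{E:Casimir}) together with the exact pole structure (\ref{E:structureRmat}) of $\rho$.
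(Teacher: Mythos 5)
Your proof is correct, and it rests on the same three pillars as the paper's argument---the extension of $\rho$ through the distinguished section $\varrho$ of (\ref{E:CanRmatrix}), the simple--pole structure (\ref{E:structureRmat}) with residues killed by the $\ad$--invariance (\ref{E:Casimir}) of the fiberwise Casimir, and the cohomological vanishing $H^0(X,\kA)=0$---but you implement them by a genuinely different, slicewise route. The paper works globally on the threefold $X\times C\times C$: it views $\llbracket \varrho,\varrho\rrbracket$ as a section of $\kA\boxtimes\kA^\circ\boxtimes\kA^\circ(\Sigma)$ with $\Sigma=\Sigma_{12}+\Sigma_{13}+\Sigma_{23}$, sets up a sheaf--level residue sequence along $\Theta=\Sigma_{12}$ (done in detail, with $\Sigma_{13}$ and $\Sigma_{23}$ declared similar), and concludes with the K\"unneth vanishing $\Gamma\bigl(X\times C\times C,\kA\boxtimes\kA^\circ\boxtimes\kA^\circ\bigr)=0$. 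You instead freeze $x_2\neq x_3$ and treat $F=\llbracket\rho,\rho\rrbracket(\,\cdot\,,x_2,x_3)$ as a meromorphic section of $\kA\otimes_\kk\bigl(\kA\big|_{x_2}\otimes\kA\big|_{x_3}\bigr)$ over the single projective curve $X$. This buys two simplifications: the divisor $\Sigma_{23}$ never produces a pole in the variable $x_1$, so only two residues (at $x_2$ and at $x_3$) need to be checked rather than three, and the K\"unneth formula on the triple product is replaced by the trivial observation that $H^0(X,\kA)\otimes_\kk V=0$ for a fixed finite--dimensional $V$. The small price is the final density step---pointwise vanishing for each fixed pair $(x_2,x_3)$ must be promoted to identical vanishing of the meromorphic section, which is immediate since the pairwise--distinct locus is dense. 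Your residue computations are accurate: at $x_1=x_2$ the grouping $\rho^{13}+\rho^{23}=\sum_a(p_a\otimes 1+1\otimes p_a)\otimes q_a$ in slots $(1,2)$, and at $x_1=x_3$ the grouping $\rho^{12}+\rho^{32}$ in slots $(1,3)$, match the paper's computation of $\underline{\res}_\Theta^\omega\bigl(\llbracket\rho,\rho\rrbracket\bigr)$ along $\Sigma_{12}$, and you correctly identify that the asymmetric term $\bigl[\rho^{32},\rho^{13}\bigr]$ in (\ref{E:geomGCYBE})---rather than any skew--symmetry of $\rho$---is what forces the second residue into $\ad$--invariant form; your implicit use of the restriction of $\varrho$ to the slice $X\times\{x_2\}$ is exactly the mechanism the paper itself exploits in Lemma \ref{L:rmatrandmaps}, so the extension claim is fully justified by the paper's own constructions.
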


\begin{proof} We first explain the main steps of the proof.
\begin{itemize}
\item Let $\varrho \in \Gamma\bigl(X \times C, \kA \boxtimes \kA^\circ\bigr)$ be the section given by (\ref{E:CanRmatrix}). We shall  prove that
\begin{equation}\label{E:geomGCYBE2}
\bigl\llbracket \varrho, \varrho\bigr\rrbracket := \bigl[\varrho^{12}, \varrho^{13}\bigr] + \bigl[\varrho^{12}, \varrho^{23}\bigr] + \bigl[\varrho^{32}, \varrho^{13}\bigr] = 0,
\end{equation}
where both sides are viewed as meromorphic   sections of the sheaf $\kA \boxtimes \kA^\circ \boxtimes \kA^\circ$ on $X \times C \times C$. The equality (\ref{E:geomGCYBE}) is obtained by restricting (\ref{E:geomGCYBE2}) on $C \times C \times C$.
\item  Let $\Sigma_{12}, \Sigma_{13}, \Sigma_{23} \subset X \times C \times C$ be the divisors given by the rules
$$\Sigma_{12} := \bigl\{(t_1, t_1, t_2)\big| (t_1, t_2) \in C \times C\bigr\}$$ etc. We put $\Sigma  = \Sigma_{12} + \Sigma_{13} + \Sigma_{23} = \Sigma_{12} \cup \Sigma_{13} \cup \Sigma_{23}$ (viewing $\Sigma$ as a divisor respectively as a subvariety of $X \times C \times C$). Then we have:
$$\llbracket \varrho, \varrho\rrbracket \in \Gamma\bigl(X\times C \times C, \kA \boxtimes \kA^\circ \boxtimes \kA^\circ(\Sigma)\bigr).$$
In other words, $\llbracket \varrho, \varrho\rrbracket$ is a rational section of $\kA \boxtimes \kA^\circ \boxtimes \kA^\circ$ having at most simple poles along the divisors $\Sigma_{12}, \Sigma_{13}$ and  $\Sigma_{23}$.
\item Computing the residues, we show that the section $\llbracket \varrho, \varrho\rrbracket$ has \emph{no poles} meaning that
$$\llbracket \varrho, \varrho\rrbracket \in \mathsf{Im}\Bigl(\Gamma\bigl(X\times C \times C, \kA \boxtimes \kA^\circ \boxtimes \kA^\circ\bigr) \lar  \Gamma\bigl(X\times C \times C, \kA \boxtimes \kA^\circ \boxtimes \kA^\circ(\Sigma)\bigr)\bigr).$$
\item By K\"unneth formula, $\Gamma\bigl(X \times C \times C, \kA \boxtimes \kA^\circ \boxtimes \kA^\circ\bigr) = 0$, implying  (\ref{E:geomGCYBE}).
\end{itemize}
Now we proceed with  the details. To simplify the notation, we shall write: $T := C \times C$; let  $T \stackrel{\pi}\lar C$ be the canonical projection on the first component and  $Y = X \times T$. Next, consider the map  $T \stackrel{\sigma}\lar Y$ given by the rule: $(t_1, t_2) \mapsto
(t_1, t_1, t_2)$; let  $\Theta := \mathsf{Im}(\sigma) = \Sigma_{12}$. Finally,  put  $\kB := \kA \boxtimes \kA^\circ \boxtimes \kA^\circ$.

\smallskip
\noindent
Consider  the following short  exact sequence of coherent sheaves on $Y$
$$
0 \lar \kO_Y \lar \kO_Y(\Theta) \xrightarrow{\res_\Theta} \sigma_*\bigl(\mbox{\textit{Hom}}_T(\pi^*\Omega_C, \kO_T)\bigr) \lar 0,
$$
which is an analogue of the residue sequence (\ref{E:Residue}).
The morphism  $\res_\Theta$ is given by the following rule: if $f = f(x, z_1, z_2) = \dfrac{g(x, z_1, z_2)}{x-z_1}$ is a local section of $\kO_Y(\Sigma)$ and $\alpha = \psi(z_1, z_2) dz_1$ is a local section of $\pi^*\Omega_C$ (in some local coordinates) then
$$
\bigl(\res_{\Sigma}(f)\bigr)(\alpha) = \bigl(T \ni (t_1, t_2) \mapsto \psi(t_1, t_2) g(t_1, t_1, t_2)\bigr).
$$
The nowhere vanishing differential form $\omega = \dfrac{1}{\nu(z)} dz \in \Gamma\bigl(C, \Omega_C\bigr)$ yields a trivialization  $\kO_C \stackrel{\omega}\lar \Omega_C$. We get  an induced short exact sequence of coherent sheaves
\begin{equation}\label{E:res}
0 \lar \kO_Y \lar \kO_Y(\Theta) \stackrel{\res_\Theta^\omega}\lar \sigma_*\bigl(\kO_T\bigr) \lar 0,
\end{equation}
where $\res_\Theta^\omega(f) = \Bigl(T \ni (t_1, t_2) \mapsto \dfrac{g(t_1, t_1, t_2)}{\tau(t_1)}\Bigr)$ for a local function $f$ as above.
Arguing as  in the proof of Proposition \ref{P:BasicsGeomdata}, we get a short exact sequence
$$
0 \lar \kB(\Sigma - \Theta) \lar \kB(\Sigma) \stackrel{\res_\Theta^\omega}\lar \kB(\Sigma - \Theta\bigr)\big|_{\Theta} \lar 0,
$$
Note that $\kB(\Sigma - \Theta\bigr)\big|_{\Theta} \cong \overline\kB(2 \Delta)$, where $\overline\kB = (\kA^\circ \otimes \kA^\circ) \boxtimes \kA^\circ$ and $\Delta \subset T$ is the diagonal. Consider the following commutative diagram:
\begin{equation*}
\xymatrix{
& \Gamma\bigl(T, \overline\kB(2 \Delta)\bigr) \ar@{^{(}->}[r]&   \Gamma\bigl(T \setminus \Delta, \overline\kB\bigr) \ar[r]^-\cong & \bigl(A \otimes_O A \otimes_\kk A\bigr)\otimes_\kk S\\
\Gamma\bigl(X \times T, \kB(\Sigma)\bigr) \ar[ru]^-{\res_{\Theta}^\omega} \ar[rd]_-{\mathsf{restr}} & &  & & \\
& \Gamma\bigl(C \times T, \kB(\Sigma)\bigr) \ar[uu]_-{\underline{\res}_{\Theta}^\omega} \ar@{^{(}->}[r]& \Gamma\bigl(C \times T\setminus \Sigma , \kB \bigr) \ar[r]^-\cong &
 \bigl(A \otimes_\kk A \otimes_\kk A\bigr)\otimes_\kk R,
}
\end{equation*}
where $R := \Gamma\bigl(C \times C \times C\setminus \Sigma, \kO_{C \times C \times C}\bigr)$ and $S := \Gamma\bigl(C \times C \setminus \Delta, \kO_{C \times C}\bigr)$. Our goal is to show that $\res_{\Theta}^\omega\bigl(\llbracket \varrho, \varrho\rrbracket\bigr) = 0$. It is equivalent to the statement that $\underline{\res}_{\Theta}^\omega\bigl(\llbracket \rho, \rho\rrbracket\bigr) = 0$.

\smallskip
\noindent
Recall that $\rho \in \Gamma\bigl(C \times C \setminus \Delta, \kA^\circ \boxtimes \kA^\circ\bigr)$ admits a presentation (\ref{E:structureRmat}). Then we get: $\llbracket \rho, \rho\rrbracket =$
$$
\;\left[
\frac{\nu(x_2)}{x_1 - x_2} \widetilde{\gamma}^{12} + \upsilon^{12}(x_1, x_2), \frac{\nu(x_3)}{x_1 - x_3} \widetilde{\gamma}^{13} + \upsilon^{13}(x_1, x_3)
\right]+
$$
$$
\;\left[
\frac{\nu(x_2)}{x_1 - x_2} \widetilde{\gamma}^{12} + \upsilon^{12}(x_1, x_2), \frac{\nu(x_3)}{x_2 - x_3} \widetilde{\gamma}^{23} + \upsilon^{23}(x_2, x_3)
\right]+
$$
$$
\left[
\frac{\nu(x_2)}{x_3 - x_2} \widetilde{\gamma}^{32} + \upsilon^{32}(x_3, x_2), \frac{\nu(x_3)}{x_1 - x_3} \widetilde{\gamma}^{13} + \upsilon^{13}(x_1, x_3)
\right],
$$
where the entire expression is viewed as an element of $\bigl(A \otimes_\kk A \otimes_\kk A\bigr)\otimes_\kk R$. Then  $\underline{\res}_{\Theta}^\omega\bigl(\llbracket \rho, \rho\rrbracket\bigr)$ viewed as an element of $\bigl((A \otimes_O A) \otimes_\kk A\bigr)\otimes_\kk R,$
is given by the formula:
$$\underline{\res}_{\Theta}^\omega\bigl(\llbracket \rho, \rho\rrbracket\bigr) =
\nu(z_2) \Bigl[\gamma^{12}, \overline\gamma^{13} + \overline\gamma^{23}\Bigr] + \Bigl[\gamma^{12}, \overline{\upsilon}^{13}(z_1, z_2) + \overline{\upsilon}^{23}(z_1, z_2)\Bigr].
$$
Here we apply the canonical morphism $A \otimes_\kk A \otimes_\kk A \lar (A \otimes_O A) \otimes_\kk A$ to define all the terms. Now, the equality
(\ref{E:Casimir}) implies that both summands of the above expression for  $\underline{\res}_{\Theta}^\omega\bigl(\llbracket \rho, \rho\rrbracket\bigr)$ are actually zero.

In a similar way, one can prove that the section $\llbracket \varrho, \varrho\rrbracket$ has no poles along the  other two divisors $\Sigma_{13}$ and $\Sigma_{23}$, which finishes the proof of the identity (\ref{E:geomGCYBE}).
\end{proof}

\begin{remark}
The idea to construct solutions of CYBE from an algebro--geometric data was suggested for the first time by Cherednik \cite{Cherednik}, who  stated
a version of  Theorem \ref{T:GCYBE} in the case of smooth curves and briefly outlined  an idea of the proof.
\end{remark}

\begin{definition} Let $x \ne y \in C$. Then we have  two canonical maps of vector spaces
\begin{itemize}
\item  the \emph{residue isomorphism} $\Gamma\bigl(X, \kA(x)\bigr) \stackrel{\res_x^\omega}\lar \kA\big|_x$
\item the \emph{evaluation map} $\Gamma\bigl(X, \kA(x)\bigr) \stackrel{\ev_y}\lar \kA\big|_y$
\end{itemize}
defined as follows. Tensoring  (\ref{E:ResSequence}) with $\kA$, we get a short exact sequence
\begin{equation}\label{E:sheafresidA}
0 \lar \kA \lar \kA(x) \stackrel{\res_x^\omega}\lar \kA\big|_x \lar 0.
\end{equation}
Since $H^0(X, \kA) = 0 =  H^1(X, \kA)$, applying to (\ref{E:sheafresidA}) the functor of global sections $\Gamma\bigl(X, \,-\,\bigr)$, we get the residue isomorphism $\res_x^{\omega}$ (we use  the same notation for a morphism of sheaves and the map of global sections). The
map $\ev_y$ is defined via the commutative diagram
$$
\xymatrix{
& \Gamma\bigl(X, \kA(x)\bigr) \ar[rd]^-{\ev'_y} \ar[ld]_{\ev_y}     &  \\
\kA\big|_y \ar@{^{(}->}[rr]^{\cong} & & \kA(x)\big|_y
}
$$
where $\ev'_y$ assigns to a global section of $\kA(x)$ its value in the fiber over $y$.
\end{definition}

\begin{lemma}\label{L:rmatrandmaps} Let $\bigl((X, \kA), (C, \omega)\bigr)$ be a geometric datum as in the beginning of this subsection.
For any $x_1 \ne x_2 \in C$, consider the linear map $\kA\big|_{x_2} \xrightarrow{\rho^{\sharp}(x_1, x_2)} \kA\big|_{x_1}$ making the following diagram  of vector spaces
\begin{equation}\label{E:rmatrixandmaps}
\begin{array}{c}
\xymatrix{
 & \Gamma\bigl(X, \kA(x_2)\bigr) \ar[ld]_-{\res_{x_2}^\omega} \ar[rd]^-{\ev_{x_1}}& \\
 \kA\big|_{x_2} \ar[rr]^-{\rho^{\sharp}(x_1, x_2)} & & \kA\big|_{x_1}
}
\end{array}
\end{equation}
commutative.
Then the  following results are true.
\begin{itemize}
\item
The  tensor $\rho(x_1, x_2) \in \kA\big|_{x_1} \otimes \kA\big|_{x_2}$ (which is the value of the geometric r--matrix $\rho$  at the point $(x_1, x_2)$) is the image of the linear map $\rho^{\sharp}(x_1, x_2)$ under the isomorphism
$\Hom_{\kk}\bigl(\kA\big|_{x_2}, \kA\big|_{x_1}\bigr) \lar \kA\big|_{x_1} \otimes \kA\big|_{x_2}$ induced by the Killing form on $\kA\big|_{x_2}$.
\item There exists a non--empty open subset $C^\circ \subset C$ such that for all  $x_1 \ne x_2 \in C^\circ$, the linear map $\rho^{\sharp}(x_1, x_2)$ is an isomorphism. In particular, the geometric $r$--matrix $\rho$ given by (\ref{E:geomGCYBE}) leads to  \emph{non--degenerate} solution of the generalized classical Yang--Baxter equation (\ref{E:GCYBE}).
\end{itemize}
\end{lemma}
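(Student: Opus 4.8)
The plan is to prove both bullet points by tracing everything back to the single global section $\varrho = (\res_\Delta^\omega)^{-1}(\gamma)$ of~(\ref{E:CanRmatrix}), out of which $\rho$ is cut by restriction, and then reading off its behaviour fibrewise. The whole content of the first statement is that the fibrewise maps $\res_{x_2}^\omega$ and $\ev_{x_1}$ recombine to recover the value of $\varrho$ on a slice; the second statement is then a leading--order estimate near the diagonal, using the presentation~(\ref{E:structureRmat}) that we already have.

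For the first claim I would fix $x_2\in C$ and restrict $\varrho\in\Gamma\bigl(X\times C,\kA\boxtimes\kA^\circ(\Delta)\bigr)$ to the slice $X\times\{x_2\}$. Since $\kA^\circ$ is locally free, on this slice $p_2^\ast\kA^\circ$ becomes the constant space $\kA|_{x_2}$ and $\kO(\Delta)$ becomes $\kO_X(x_2)$, so $\varrho|_{X\times\{x_2\}}$ lives in $\Gamma\bigl(X,\kA(x_2)\bigr)\otimes_\kk\kA|_{x_2}$; using the Killing form on $\kA|_{x_2}$ to self--dualise the second factor, this is a linear map $\beta_{x_2}\colon\kA|_{x_2}\lar\Gamma\bigl(X,\kA(x_2)\bigr)$. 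The key step is the functoriality of the residue sequence (Lemma~\ref{L:PropertiesResSeq}): restricting~(\ref{E:ResidueSheafA}) to $X\times\{x_2\}$ recovers the sequence~(\ref{E:sheafresidA}) at $x=x_2$ tensored with $\kA|_{x_2}$, and residue commutes with this restriction, so $(\res_{x_2}^\omega\otimes\id)(\beta_{x_2})=\gamma|_{x_2}$. Because $\gamma|_{x_2}$ is the Casimir of $\kA|_{x_2}\cong\lieg$, it corresponds under the Killing form to $\id_{\kA|_{x_2}}$; hence $\res_{x_2}^\omega\circ\beta_{x_2}=\id$, i.e.\ $\beta_{x_2}=(\res_{x_2}^\omega)^{-1}$. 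Since $\rho=\mathsf{restr}(\varrho)$, the value $\rho(x_1,x_2)$ is obtained by evaluating the slice $\varrho|_{X\times\{x_2\}}$ at $x_1$, which is the tensor attached to $\ev_{x_1}\circ\beta_{x_2}=\ev_{x_1}\circ(\res_{x_2}^\omega)^{-1}=\rho^{\sharp}(x_1,x_2)$. This is exactly the asserted identification.

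For the second claim I would feed the presentation~(\ref{E:structureRmat}) through the (linear) tensor--to--map correspondence of the first claim, obtaining $\rho^{\sharp}(x_1,x_2)=\frac{\nu(x_2)}{x_1-x_2}\,\widetilde\gamma^{\sharp}(x_1,x_2)+\upsilon^{\sharp}(x_1,x_2)$, where $\widetilde\gamma^{\sharp},\upsilon^{\sharp}$ are the maps attached to $\widetilde\gamma,\upsilon$ and $\upsilon^{\sharp}$ is regular on $C\times C$. On the diagonal $\widetilde\gamma^{\sharp}$ specialises to the map attached to the Casimir $\gamma$, namely $\id$. Fixing $x_0\in C$ and a local trivialisation of $\kA$ near $x_0$, we get $\det\rho^{\sharp}(x_1,x_2)=\bigl(\tfrac{\nu(x_2)}{x_1-x_2}\bigr)^{N}\det\bigl(\widetilde\gamma^{\sharp}+\tfrac{x_1-x_2}{\nu(x_2)}\upsilon^{\sharp}\bigr)$ with $N=\dim\lieg$, and as $(x_1,x_2)\to(x_0,x_0)$ with $x_1\neq x_2$ the bracketed matrix tends to $\id$ while $\nu(x_0)\neq 0$; so $\det\rho^{\sharp}\neq 0$ on a punctured neighbourhood of the diagonal. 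Choosing a product neighbourhood $U_1\times U_2$ of $(x_0,x_0)$ on which this holds and putting $C^\circ:=U_1\cap U_2$ gives a non--empty open set with $\rho^{\sharp}(x_1,x_2)$ invertible for all $x_1\neq x_2\in C^\circ$. Non--degeneracy of $\rho$ as a solution of~(\ref{E:GCYBE}) is then immediate from the first claim, since $\widetilde\kappa\bigl(\rho(x_1,x_2)\bigr)$ is precisely $\rho^{\sharp}(x_1,x_2)$ under the Killing--form isomorphism. (Alternatively, one may reduce invertibility of $\rho^{\sharp}$ to that of $\ev_{x_1}$, whose kernel is $\Gamma\bigl(X,\kA\otimes\kO_X(x_2-x_1)\bigr)$, and invoke upper semicontinuity of $h^0$ in the flat family $p_X^\ast\kA\otimes\kO(D_2-D_1)$ over $C\times C$ together with $H^0(X,\kA)=0$ on the diagonal.)

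The main obstacle is the bookkeeping in the first claim: one must verify that the several appearances of the Killing form are mutually compatible---its use to define $\gamma$ as the preimage of $\id$, to self--dualise the fibre $\kA|_{x_2}$ in identifying $\beta_{x_2}$, and to convert $\rho^{\sharp}$ back into a tensor---and that the residue morphism genuinely commutes with restriction to the slice $X\times\{x_2\}$, so that the Casimir $\gamma|_{x_2}$ is recovered exactly. Once the identification $\beta_{x_2}=(\res_{x_2}^\omega)^{-1}$ is in hand, the remaining work---the evaluation at $x_1$ and the leading--order determinant estimate---is routine.
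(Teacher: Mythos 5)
Your proof is correct; on the first bullet it coincides with the paper's argument (the paper disposes of that claim in one line, as a consequence of the functoriality of the residue sequence, Lemma \ref{L:PropertiesResSeq} --- your slice computation identifying $\beta_{x_2}=(\res_{x_2}^\omega)^{-1}$ and then evaluating at $x_1$ is exactly what that citation compresses), while on the second bullet your main argument takes a genuinely different route. The paper works with the evaluation map directly: fixing $x_1$, it forms the sheaf $p^*\bigl(\kA(-x_1)\bigr)\otimes\kO_{X\times C}(\Delta)$, flat over $C$ with fibres $\kA(x-x_1)$ of Euler characteristic zero, and semicontinuity from $\Gamma(X,\kA)=0$ at $x=x_1$ yields an open $C^\circ\ni x_1$ with $\Gamma\bigl(X,\kA(x_2-x_1)\bigr)=0$ for all $x_2\in C^\circ$; applying $\Gamma(X,\,-\,)$ to the sequence (\ref{E:evaluation}) then makes $\ev_{x_1}$ injective, hence bijective since both sides have dimension $\dim\lieg$, so $\rho^{\sharp}(x_1,x_2)=\ev_{x_1}\circ(\res_{x_2}^\omega)^{-1}$ is invertible. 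This is precisely your parenthetical alternative, run in a one--parameter family. Your main argument instead reads invertibility off the leading term of (\ref{E:structureRmat}): the function $\det\bigl(\widetilde\gamma^{\sharp}+\tfrac{x_1-x_2}{\nu(x_2)}\upsilon^{\sharp}\bigr)$ is regular near the diagonal and equals $1$ on it (the Casimir of the fibre corresponds to the identity), so $\det\rho^{\sharp}\neq 0$ off the diagonal nearby. Your route is more elementary --- no flatness or semicontinuity --- and handles both variables at once; the paper's route buys the sharper intermediate fact that $\ev_{x_1}$ itself is an isomorphism, which is reused later (e.g.\ in the proof of Theorem \ref{T:CYBEandWeierModel}).

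One small caution: your step ``choose a product neighbourhood $U_1\times U_2$'' is automatic in the analytic topology (legitimate for $\kk=\CC$ under the paper's conventions) but not in the Zariski topology, where an open subset of $C\times C$ containing $(x_0,x_0)$ need not contain a product of opens: its complement could be, say, the graph of $x\mapsto x+1$, which avoids the diagonal yet meets every $U_1\times U_2$. The paper's own proof carries the same quantifier slack --- its $C^\circ$ depends on $x_1$ and the stated ``for all $x_1\neq x_2\in C^\circ$'' is not literally delivered either --- and in both cases what survives in the algebraic category is non--degeneracy at some pair $(x_1,x_2)$, hence (the non--degeneracy locus being open) at a generic pair, which is all that the paper's definition of a non--degenerate solution requires.
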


\begin{proof}
The first result is a consequence of the functoriality of the residue sequence (\ref{P:ResidueSeqInduced}); see Lemma \ref{L:PropertiesResSeq}. To prove the second statement, let $x_1 \ne x_2 \in C$ be arbitrary points. Consider the short exact sequence
\begin{equation}\label{E:evaluation}
0 \lar \kA(x_2 -x_1) \lar \kA(x_2) \lar \kA\big|_{x_1} \lar 0
\end{equation}
 as well as the  coherent sheaf $\kC : = p^*\bigl(\kA(-x_1)\bigr) \otimes \kO_{X \times C}(\Delta)$ on $X \times C$, where $X \times C \stackrel{p}\lar X$ is the canonical projection. Obviously, $\kC$ is flat over $C$ and $\kB\big|_{X \times \{x\}} \cong \kA\bigl(x - x_1)$ for all $x \in C$. Therefore,
$
\chi\Bigl(\kB\big|_{X \times \{x\}}\Bigr) = \chi\Bigl(\kB\big|_{X \times \{x_1\}}\Bigr) = \chi(\kA) = 0
$
for all $x \in C$. Since $\Gamma\bigl(X, \kB\big|_{X \times \{x_1\}}\bigr) = \Gamma(X, \kA) = 0$, by semi--continuity there exists
an open subset $C^\circ \subseteq C$ containing $x_1$ such that $\Gamma\bigl(X, \kB\big|_{X \times \{x\}}\bigr)  = 0$ for all $x \in C^\circ$. Applying to (\ref{E:evaluation}) the functor of global sections $\Gamma(X, \,-\,)$, we conclude that the linear map $\ev_{x_1}$ is an isomorphism, which implies  the statement.
\end{proof}

\begin{remark}
Assume that a geometric datum $\bigl((X, \kA), (C, \omega)\bigr)$ is such that there exists  an $O$--linear isomorphism of Lie algebras $\Gamma(C, \kA) \stackrel{\xi}\lar \lieg \otimes_\kk O$. This trivialization $\xi$ allows to present the geometric $r$--matrix $\rho$  as  a meromorphic  function
$r = \rho^\xi: C \times C\setminus\Delta \lar \lieg \otimes \lieg$, which is a non--degenerate solution of the generalized classical Yang--Baxter equation (\ref{E:GCYBE}). This tensor--valued function can be written in the form:
$$
r(x_1, x_2) = \frac{\nu(x_2)}{x_1-x_2} \widetilde{\gamma} + \upsilon(x_1, x_2),
$$
where $C \times C \stackrel{\upsilon}\lar \lieg \otimes \lieg$ is a regular function.
\begin{itemize}
\item Let us take any other  $O$--linear isomorphism of Lie algebras $\Gamma(C, \kA^\circ) \stackrel{\zeta}\lar \lieg \otimes_\kk O$. Then we get an automorphism  $\phi$ of $\lieg \otimes_\kk O$ defined via the following commutative diagram of Lie algebras:
    $$
    \xymatrix{
    & \Gamma(C, \kA^\circ) \ar[ld]_-\xi \ar[rd]^-\zeta & \\
 \lieg \otimes_\kk O \ar[rr]^-\phi & &    \lieg \otimes_\kk O
    }
    $$
This automorphism $\phi$ establishes  a gauge equivalence between the solutions $\rho^\xi$ and $\rho^\zeta$ of (GCYBE).
\item If we replace the differential form $\omega$ by any multiple $\widetilde{\omega} = \dfrac{1}{\alpha} \omega$, where $\alpha$ is some meromorphic function on $C$, then the corresponding solution $r(x_1, x_2)$ will be transformed into the equivalent solution $\alpha(x_2) r(x_1, x_2)$.
\end{itemize}
\end{remark}

\begin{lemma}
Let $\bigl((X, \kA), (C, \omega)\bigr)$ be a geometric datum as at the beginning of this section and $Z$ be the irreducible component of $X$ containing $C$. Then the \emph{geometric genus} of $Z$ is at most one.
\end{lemma}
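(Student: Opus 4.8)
The plan is to push the hypotheses on $\kA$ onto the smooth projective model of $Z$ and there play Riemann--Roch against the Killing self--duality; the one--form $\omega$ will play no role. Let $\nu\colon \widetilde{Z}\lar Z$ be the normalization and $g$ the genus of $\widetilde{Z}$, which by definition is the geometric genus of $Z$. Since $C$ is a \emph{smooth} open subset of $Z$, the map $\nu$ is an isomorphism over $C$, so $C$ lifts to an open $\widetilde{C}\subset\widetilde{Z}$ whose complement is finite. Writing $\iota\colon Z\hookrightarrow X$ for the closed embedding, I would first build a vector bundle $\kE$ of rank $d=\dim\lieg$ on $\widetilde{Z}$ extending $\kA^{\circ}=\kA\big|_{C}$, namely $\kE:=\bigl(\nu^{*}\kA_{Z}\bigr)/\tors$, where $\kA_{Z}:=\bigl(\iota^{*}\kA\bigr)/\tors$ is the torsion--free restriction of $\kA$ to $Z$.

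The first key step is the vanishing $H^{1}(\widetilde{Z},\kE)=0$. The adjunction surjection $\kA\twoheadrightarrow \iota_{*}\bigl(\iota^{*}\kA\bigr)$ on the curve $X$, together with $H^{2}=0$, shows that $H^{1}(Z,\iota^{*}\kA)=H^{1}\bigl(X,\iota_{*}\iota^{*}\kA\bigr)$ is a quotient of $H^{1}(X,\kA)=0$. Passing to the torsion--free quotient $\kA_{Z}$ only drops $H^{1}$ further, since the torsion subsheaf is supported on points; finally, the canonical injection $\kA_{Z}\hookrightarrow\nu_{*}\kE$ has finite cokernel and $R^{>0}\nu_{*}=0$, so $H^{1}(\widetilde{Z},\kE)=H^{1}(Z,\nu_{*}\kE)$ is again a quotient of a vanishing group. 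Hence $H^{1}(\widetilde{Z},\kE)=0$, and Riemann--Roch gives $\chi(\kE)=\deg\kE+d(1-g)=h^{0}(\widetilde{Z},\kE)\ge 0$.

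The second key step is $\deg\kE\le 0$. The Lie bracket of $\kA$ pulls back along $\nu$ and descends to a regular $\kO_{\widetilde{Z}}$--bilinear bracket on the torsion--free sheaf $\kE$, making $\kE$ a sheaf of Lie algebras on $\widetilde{Z}$; consequently $\kappa_{\kE}(f,h):=\tr\bigl(\ad(f)\ad(h)\bigr)$ is a regular symmetric form $\kE\otimes\kE\lar\kO_{\widetilde{Z}}$. Over $\widetilde{C}$ the fibres of $\kE$ are isomorphic to the simple algebra $\lieg$, so $\kappa_{\kE}$ is non--degenerate there, and its determinant is a nonzero global section of $(\det\kE)^{\otimes -2}$; a line bundle with a nonzero section has nonnegative degree, whence $\deg\kE\le 0$. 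Combining the two steps, $0\le \deg\kE+d(1-g)\le d(1-g)$, and since $d=\dim\lieg>0$ this forces $g\le 1$.

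I expect the main obstacle to be the cohomological bookkeeping of the first step: one must verify that each reduction — restricting $\kA$ to the component $Z$, killing torsion, and pulling back along the normalization — only \emph{destroys} $H^{1}$ (always exhibiting the new $H^{1}$ as a quotient of a vanishing one) rather than creating it, and that the descended bracket on $\kE$ is genuinely regular so that $\kappa_{\kE}$ is defined everywhere. Once these functorialities are in place, the numerical conclusion is immediate, and no information beyond $H^{1}(X,\kA)=0$ and the fibrewise simplicity of $\kA$ on $C$ is required.
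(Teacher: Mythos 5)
Your proof is correct, and it follows the same skeleton as the paper's: restrict $\kA$ to the component $Z$, pull back along the normalization and kill torsion to get a locally free sheaf of Lie algebras $\kE$ on the smooth model, transfer the vanishing $H^1(X,\kA)=0$ through these reductions (each step exhibits the new $H^1$ as a quotient of a vanishing group, exactly as you argue), and then play Riemann--Roch against the generic non--degeneracy of the Killing form. The one genuine divergence is the final numeric step. The paper promotes the Killing form to a morphism $\kB \stackrel{\bar\kappa}\lar \kB^\vee$, observes it is a monomorphism with torsion cokernel, deduces $H^1(Y,\kB^\vee)=0$ from $H^1(Y,\kB)=0$, and adds the two Riemann--Roch inequalities $0\le \chi(\kB)=\deg\kB+(1-g)\,\mathrm{rk}(\kB)$ and $0\le \chi(\kB^\vee)=-\deg\kB+(1-g)\,\mathrm{rk}(\kB)$; the degrees cancel and $g\le 1$ falls out. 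You instead take the determinant of the Killing pairing, a nonzero global section of $(\det\kE)^{\otimes -2}$, to get $\deg\kE\le 0$ directly, and combine it with the single inequality $\chi(\kE)\ge 0$. The two mechanisms are equivalent in substance -- both rest on the Killing form giving a generically non--degenerate map $\kE\lar\kE^\vee$ -- but yours needs only one cohomological vanishing, at the price of checking that the bracket descends to $\kE$ (torsion is a Lie ideal, since $fs=0$ forces $f[s,x]=[fs,x]=0$, so this is fine) and that $\kappa_\kE$ is regular everywhere, which holds because $\kE$ is locally free on the smooth curve. A side benefit of your write--up is that it makes explicit the bookkeeping the paper compresses into ``one can deduce'': the surjectivity of the unit $\kA \lar \iota_*\iota^*\kA$, the harmlessness of killing torsion, and the finiteness of $\nu$ giving $H^1(\widetilde{Z},\kE)\cong H^1(Z,\nu_*\kE)$.
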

\begin{proof} We first show the following

\smallskip
\noindent
Claim. Assume that we have a smooth connected projective curve $Y$ and a coherent sheaf of Lie algebras $\kB$ on $Y$ such that
\begin{itemize}
\item $\kB$ is locally free and $H^1(Y, \kB) = 0$;
\item there exists a non--empty open subset $Y^\circ \subset Y$ such that $\kB\big|_y$ is a semi--simple Lie algebra for all $y \in Y^\circ$.
\end{itemize}
Then the genus of $Y$ is at most one.

\smallskip
\noindent
\emph{Proof of the claim}.
Indeed, since $\kB$ is locally free, for any open subset $V \subseteq Y$, we have a well--defined Killing--form $\Gamma(V, \kB) \times \Gamma(V, \kB) \stackrel{\kappa}\lar
\Gamma(V, \kO_Y)$. It defines a morphism of  sheaves $\kB \stackrel{\bar\kappa}\lar \kB^\vee$, which is moreover an isomorphism when restricted to $Y^\circ$. Since $\kB$ and $\kB^\vee$ have  the same rank, we conclude that the morphism $\bar\kappa$ is a monomorphism, whose  cokernel  is a torsion sheaf. From  $H^1(Y, \kB) = 0$ it follows that $H^1(Y, \kB^\vee) = 0$, too. By the Riemann--Roch formula, we get two inequalities:
$$
0 \le \chi(\kB) = \mathsf{deg}(\kB) + (1-g)\mathsf{rk}(\kB) \quad \mbox{and} \quad 0 \le \chi(\kB^\vee) = - \mathsf{deg}(\kB) + (1-g)\mathsf{rk}(\kB).
$$
Adding them, we conclude that $g  \le 1$, as claimed.

\smallskip
Now we are prepared to prove the statement of the lemma. Consider the torsion free sheaf of Lie algebras $\kC := \kA\big|_{Z}$ on the integral projective curve $Z$.
From the vanishing $H^1(X, \kA) = 0$ one can deduce that $H^1(Z, \kC) = 0$, too. Let $Y \stackrel{\pi}\lar Z$ be the normalization of $Z$ and
$\kB := \pi^*\kC/\mathsf{tor}(\pi^*\kC\bigr)$. Then the pair $(Y, \kB)$ satisfies the assumptions of the claim we started with, finishing the proof of the lemma.
\end{proof}

\section{On the skew--symmetry of the geometric r--matrix}

\noindent
In Section \ref{S:GCYBE}, we introduced a certain geometric datum $\bigl((X, \kA), (C, \omega)\bigr)$ and then assigned to it a section
$\rho \in \Gamma\bigl(C \times C \setminus \Delta, \kA^\circ \boxtimes \kA^\circ \bigr)$.

\begin{definition} The geometric $r$--matrix $\rho$ is called \emph{skew--symmetric}, if
$$
\sigma(\rho) = - \imath^*(\rho) \in \Gamma\bigl(C \times C \setminus \Delta, \kA^\circ \boxtimes \kA^\circ \bigr),
$$
where $\imath$ is the automorphism of the surface $C \times C$ permuting both components and $\sigma$ is an automorphism of the coherent sheaf $\kA^\circ \boxtimes \kA^\circ$ permuting  both factors on the level of appropriate local sections.
\end{definition}

\noindent
Note that after passing to a local  trivialization of $\kA$, we obtain the conventional notion
of skew--symmetry given by (\ref{E:skewsymmetry}).
The goal of  this section is to  answer the following natural

\smallskip
\noindent
\textbf{Question}.
What are  additional assumptions on the geometric datum $\bigl((X, \kA), (C, \omega)\bigr)$, which  insure that  the corresponding geometric $r$--matrix $\rho$ is skew--symmetric?

\subsection{Criterion of skew--symmetricity}
Let $x \ne y \in C$ and $U = C \setminus \{x, y\}$. Consider the following $\kk$--bilinear pairing
$$
\varpi: \;
\Gamma\bigl(X, \kA(x)\bigr) \times \Gamma\bigl(X, \kA(y)\bigr) \stackrel{\mathsf{can}}\lar \Gamma(U, \kA) \times \Gamma(U, \kA) \stackrel{\kappa}\lar  \Gamma(U, \kO_C)
$$

\begin{proposition}\label{P:skewsymm} We have: $\rho^{12}(x, y) = - \rho^{21}(y, x)$ if and only if
\begin{equation}\label{E:critskewsymm}
\res_x\bigl(\varpi(a, b) \omega\bigr) + \res_y\bigl(\varpi(a, b)\omega\bigr) = 0 \; \mbox{\rm for all} \; (a, b) \in \Gamma\bigl(X, \kA(x)\bigr) \times \Gamma\bigl(X, \kA(y)\bigr).
\end{equation}
\end{proposition}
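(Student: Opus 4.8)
The plan is to reduce the tensorial identity $\rho^{12}(x,y) = -\rho^{21}(y,x)$ to a statement about the linear maps $\rho^{\sharp}$ of Lemma \ref{L:rmatrandmaps}, and then to recognize the latter as exactly the vanishing of the residue sum (\ref{E:critskewsymm}). First I would unwind the skew-symmetry condition. Since $\rho(x,y) \in \kA\big|_x \otimes \kA\big|_y$ and $\sigma$ permutes the two factors, pairing $\rho(x,y) + \sigma\bigl(\rho(y,x)\bigr)$ with functionals $\xi \otimes \eta$, where $\xi \in (\kA\big|_x)^{\ast}$ and $\eta \in (\kA\big|_y)^{\ast}$, shows that skew-symmetry is equivalent to $(\xi \otimes \eta)\bigl(\rho(x,y)\bigr) = -(\eta \otimes \xi)\bigl(\rho(y,x)\bigr)$ for all such $\xi, \eta$. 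By the first part of Lemma \ref{L:rmatrandmaps}, the tensor $\rho(x,y)$ is the image of $\rho^{\sharp}(x,y)$ under the Killing-form isomorphism $\Hom_\kk\bigl(\kA\big|_y, \kA\big|_x\bigr) \to \kA\big|_x \otimes \kA\big|_y$. Writing $\xi = \kappa(w, \ARG)$ with $w \in \kA\big|_x$ and $\eta = \kappa(v, \ARG)$ with $v \in \kA\big|_y$, a short dual-basis computation identifies the two sides with $\kappa\bigl(w, \rho^{\sharp}(x,y)v\bigr)$ and $-\kappa\bigl(v, \rho^{\sharp}(y,x)w\bigr)$. Hence skew-symmetry is equivalent to
$$
\kappa\bigl(w, \rho^{\sharp}(x,y)v\bigr) + \kappa\bigl(v, \rho^{\sharp}(y,x)w\bigr) = 0 \quad \mbox{for all } w \in \kA\big|_x, \ v \in \kA\big|_y,
$$
that is, to the relation $\rho^{\sharp}(y,x) = -\rho^{\sharp}(x,y)^{\ast}$ between the two maps with respect to the Killing forms on the fibers.

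Next I would invoke the defining diagram (\ref{E:rmatrixandmaps}). As $\res_{x}^{\omega}$ and $\res_{y}^{\omega}$ are isomorphisms, I may substitute $w = \res_x^\omega(a)$ and $v = \res_y^\omega(b)$, letting $a$ and $b$ range over $\Gamma\bigl(X, \kA(x)\bigr)$ and $\Gamma\bigl(X, \kA(y)\bigr)$; the commutativity of (\ref{E:rmatrixandmaps}) then gives $\rho^{\sharp}(x,y)v = \ev_x(b)$ and $\rho^{\sharp}(y,x)w = \ev_y(a)$. The criterion above thus becomes
$$
\kappa\bigl(\res_x^\omega(a), \ev_x(b)\bigr) + \kappa\bigl(\res_y^\omega(b), \ev_y(a)\bigr) = 0 \quad \mbox{for all } a, b.
$$

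Finally I would match these two Killing-form pairings with the two residues in (\ref{E:critskewsymm}) by a local computation. Near $x$ the section $a$ has at most a simple pole and $b$ is regular, so $\varpi(a,b) = \kappa(a,b)$ has at most a simple pole at $x$; expanding in a local coordinate and using $\res_x^\omega(\ARG) = \res_x(\ARG\,\omega)$ gives $\res_x\bigl(\varpi(a,b)\omega\bigr) = \kappa\bigl(\res_x^\omega(a), \ev_x(b)\bigr)$, and symmetrically $\res_y\bigl(\varpi(a,b)\omega\bigr) = \kappa\bigl(\res_y^\omega(b), \ev_y(a)\bigr)$. Summing these two equalities reproduces the left-hand side of (\ref{E:critskewsymm}), and since $a, b$ range over all global sections the desired equivalence follows.

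The main obstacle is bookkeeping rather than conceptual. One must track carefully which tensor slot carries the Killing-form identification in $\rho(x,y)$ as opposed to $\rho(y,x)$ — this is where the asymmetry between $x$ and $y$, and the sign, are genuinely encoded — and one must keep the normalization of $\res^\omega$ against $\omega$ consistent in the Laurent expansions, so that the factor coming from $\omega$ cancels the same factor hidden in $\res_x^\omega(a)$ and $\res_y^\omega(b)$ and no spurious scalar survives. Once these points are settled, the residue computations themselves are routine.
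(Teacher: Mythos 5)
Your proposal is correct and follows essentially the same route as the paper: both reduce skew--symmetry via Lemma \ref{L:rmatrandmaps} to the adjointness relation $\kappa_x\bigl(\res_x^\omega(a), \ev_x(b)\bigr) + \kappa_y\bigl(\res_y^\omega(b), \ev_y(a)\bigr) = 0$ and then identify each Killing--form pairing with the corresponding residue of $\varpi(a,b)\omega$. The only (cosmetic) difference is that you verify this last identification by a local Laurent--expansion computation, whereas the paper encodes the same compatibility of the fiberwise Killing form with $\res_x^\omega$ and $\ev_x$ in a commutative diagram of quasi--coherent sheaves.
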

\begin{proof} It follows from Lemma \ref{L:rmatrandmaps} that the equality $\rho^{12}(x, y) + \rho^{21}(y, x) = 0$ can be rewritten in the following form:
\begin{equation}\label{E:unitarity1}
\kappa_y\bigl(\bigl(\rho^{\sharp}(x, y)\bigr)(\alpha), \beta\bigr) + \kappa_x\bigl(\alpha, \bigl(\rho^{\sharp}(y, x)\bigr)(\beta)\bigr) = 0 \; \mbox{\rm for all} \;
(\alpha, \beta) \in \kA\big|_x \times \kA\big|_y,
\end{equation}
where $\kappa_x$ (respectively,  $\kappa_y$)  is  the Killing forms of the Lie algebra $\kA\big|_x$ (respectively, $\kA\big|_y$).  Given $(\alpha, \beta) \in \kA\big|_x \times \kA\big|_y$, let $(a, b) \in \Gamma\bigl(X, \kA(x)\bigr) \times \Gamma\bigl(X, \kA(y)\bigr)$ be the element, uniquely determined by the properties  $\alpha = \res^\omega_x(a)$ and $\beta = \res^\omega_y(b)$. Taking into account that the Killing form $\kappa_y$ is symmetric, we can rewrite (\ref{E:unitarity1}) as follows:
\begin{equation}\label{E:unitarity2}
 \kappa_x\bigl(\res^\omega_x(a), \ev_x(b)\bigr) +  \kappa_y\bigl(\res^\omega_y(b), \ev_y(b)\bigr) = 0 \; \mbox{\rm for all} \;
(a, b) \in \Gamma\bigl(X, \kA(x)\bigr) \times \Gamma\bigl(X, \kA(y)\bigr).
\end{equation}
Let $U \stackrel{\jmath}\lar X$ be the canonical embedding. Then the following diagram
$$
\xymatrix{
\kA(x) \otimes \kA(y) \ar[rr] \ar[d]_-{\res_x^\omega \otimes \ev_x} & & \jmath_*\bigl(\kA\big|_U) \otimes \jmath_*\bigl(\kA\big|_U) \ar[d] \\
\kA\big|_x \otimes \kA\big|_x \ar[d]_-{\widetilde{\kappa}_x} & & \jmath_*\bigl(\kA\big|_U  \otimes \kA\big|_U\bigr) \ar[d]^-{\widetilde{\kappa}_U} \\
\CC_x & & \jmath_*\bigl(\kO_U\bigr) \ar[ll]_-{\res_x^\omega}
}
$$
of quasi--coherent sheaves on $X$ is commutative. From this diagram we conclude that $\kappa_x\bigl(\res^\omega_x(a), \ev_x(b)\bigr) = \res_x\bigl( \varpi(a, b) \omega\bigr)$, implying the statement.
\end{proof}

\subsection{Geometric r--matrices  arising from Calabi--Yau curves}
Belavin--Drinfeld trichotomy result \cite[Theorem 1.1]{BelavinDrinfeld} on the classification of non--degenerate skew--symmetric  solutions of (\ref{E:CYBE}) suggests to restrict our attention on those geometric
data $\bigl((X, \kA), (C, \omega)\bigr)$, for which  $X$ is a  Calabi--Yau curve and $C$ is a smooth part of an irreducible component of $X$. This case is ``deterministic'' since  there exists a unique (up to a scalar) natural choice of the differential form $\omega$:  a generator of the vector space
$\Gamma(X, \Omega_X) \cong \kk$.

\smallskip
Let $E = E_{g_2, g_3} := \overline{V\bigl(u^2 - 4v^3 + g_2 v + g_3\bigr)} \subset \PP^2$ be a Weierstra\ss{} cubic curve, where $g_2, g_3 \in \kk$. Weierstra\ss{} cubics are precisely the integral Calabi--Yau curves.
The curve $E_{g_2, g_3}$ is singular  if and only
if $g_2^3  =  27 g_3^2$. In this case, $E$ has a unique singular point $s = (\lambda, 0) = (\lambda: 0: 1)$ with
 \begin{equation*}
 \lambda =
 \left\{
 \begin{array}{cl}
  \dfrac{3g_3}{2g_2} & s \mbox{\rm \; is a node}, \\
  0 & s \mbox{\rm \; is a cusp}.
 \end{array}
 \right.
 \end{equation*}
We  may put:
\begin{equation}\label{E:regular}
\omega := \frac{du}{12 v^2 - g_2} = \frac{dv}{2 u}.
\end{equation}
 In the case $E$ is smooth, the differential form $\omega$ is nowhere vanishing. If $E$ is singular, $\omega$ is a regular
  Rosenlicht one--form \cite[Section II.6]{BarthHulekPetersVen}.
Let $\PP^1 \stackrel{\pi}\lar  E$ be the normalization map.
 Denote by   $\Omega^m_{\PP^1}$ and $\Omega^m_{E}$
the sheaves  of meromorphic differential  1--forms on $\PP^1$ and $E$
respectively; note  that
$\Omega^m_E = \pi_*\bigl(\Omega^m_{\PP^1}).
$
 Then the canonical sheaf  of $E$  can be realized as   the subsheaf $\Omega_E$ of $\Omega^m_E$ such that for
any open subset $U \subseteq E$ one has
$$
\Omega_E(U) = \left\{\alpha   \in \Omega^{m}_{\PP^1}(V)\left|
\mbox{for all\;} p \in U\; \mbox{and}\;  f \in \kO_{E}(U):\;
\sum\limits_{q:\; \pi(q) = p} \res_{q}\bigl((f\circ \pi)\alpha\bigr) = 0
\right.\right\},
$$
where $V = \pi^{-1}(U) \subseteq \PP^1$. Then the  differential form $\omega$, given by (\ref{E:regular}),  generates  $\Gamma(E, \Omega_E)$.

\smallskip
\noindent
Alternatively,  we can choose homogeneous coordinates $(z_0: z_1)$  on $\PP^1$ such that
\begin{equation}\label{E:PreimageSing}
\pi^{-1}(s) =
 \left\{
 \begin{array}{cl}
  \bigl\{0, \infty\bigr\} & E \mbox{\rm \; is nodal}, \\
  \bigl\{\infty\bigr\}  &   E \mbox{\rm \; is cuspidal},
 \end{array}
 \right.
\end{equation}
where $0 = (1: 0)$ and $\infty = (0: 1)$. Then  $z = \dfrac{z_1}{z_0}$ is a local coordinate in a neighbourhood of $0$ and we can define a generator of
$\Gamma(E, \Omega_E)$ by the formula:
\begin{equation}\label{E:DiffForm}
\omega =
 \left\{
 \begin{array}{cl}
  \dfrac{dz}{z}  & E \mbox{\rm \; is nodal}, \\
   dz   &   E \mbox{\rm \; is cuspidal}.
 \end{array}
 \right.
\end{equation}
\begin{theorem}\label{T:CYBEtorsfree} Let $E$ be a Weierstra\ss{} cubic, $\omega$ a generator of $\Gamma(E, \Omega_E)$  and  $\kA$ be a coherent sheaf of Lie algebras on $E$ satisfying the following conditions:
\begin{enumerate}
\item $H^0(E, \kA) = 0 = H^1(E, \kA)$.
\item $\kA$ is weakly $\lieg$--locally free over $\breve{E}$.
\end{enumerate}
If $E$ is singular, we impose a third condition on the germ $A_s$ of the sheaf $\kA$ at the singular point $s$.
Consider the $\CC$--bilinear pairing
\begin{equation}\label{E:canpairing}
\kappa^\omega: \quad A_K \times A_K \stackrel{\kappa}\lar K \stackrel{\res_s^\omega}\lar \CC,
\end{equation}
where $\kappa$ is the Killing form of $A_K$ and
$$\res_s^\omega(f) = \sum\limits_{q \in \PP^1: \pi(q) = s} \res_q(f\omega)$$ for $f \in K$. We require that
\begin{enumerate}
  \setcounter{enumi}{2}
  \item  $A_s$ is a coisotropic Lie subalgebra of $A_K$ with respect to the pairing $\kappa^\omega$, i.e.
  $$\kappa^\omega(f, g) = 0 \;\mbox{\rm for all}\; f, g \in A_s.$$
\end{enumerate}
Then the geometric $r$--matrix $\rho \in \Gamma\bigl(\breve{E} \times \breve{E}\setminus \Delta, \kA \boxtimes \kA\bigr)$, corresponding to the datum $\bigl((E, \kA), (\breve{E}, \omega)\bigr)$, is a skew--symmetric solution of the classical Yang--Baxter equation
$$
\bigl[\rho^{12}, \rho^{13}\bigr] + \bigl[\rho^{12}, \rho^{23}\bigr] + \bigl[\rho^{13}, \rho^{23}\bigr] = 0.
$$
\end{theorem}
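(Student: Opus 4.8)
The plan is to combine the generalized Yang--Baxter identity already proved for $\rho$ with the skew--symmetry criterion of Proposition \ref{P:skewsymm}, and then to invoke the reduction of GCYBE to CYBE for skew--symmetric solutions. First, Theorem \ref{T:GCYBE} applied to the datum $\bigl((E,\kA),(\breve E,\omega)\bigr)$ shows that $\rho$ satisfies the generalized classical Yang--Baxter equation (\ref{E:GCYBE}), and Lemma \ref{L:rmatrandmaps} furnishes non--degeneracy. It therefore remains only to establish that $\rho$ is skew--symmetric: once this is known, the skew--symmetric form of (\ref{E:GCYBE}) is precisely the classical Yang--Baxter equation (\ref{E:CYBE}), which is the asserted identity.

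The heart of the matter is thus the verification of the residue condition (\ref{E:critskewsymm}) of Proposition \ref{P:skewsymm}. Fix $x\ne y \in \breve E$ and take arbitrary $a \in \Gamma\bigl(E,\kA(x)\bigr)$ and $b \in \Gamma\bigl(E,\kA(y)\bigr)$. Since $\kA$ is locally free on $\breve E$ and $a,b$ have at most simple poles at $x$ and $y$ respectively, the function $\varpi(a,b)=\kappa(a,b)$ is regular on $\breve E \setminus \{x,y\}$, so $\kappa(a,b)\,\omega$ is a meromorphic one--form on the complete curve $E$ whose only possible poles lie over $x$, $y$ and the singular point $s$. The decisive input is the residue theorem: pulling back along the normalization $\pi\colon \PP^1 \lar E$ and summing residues on $\PP^1$ yields
$$
\res_x\bigl(\varpi(a,b)\omega\bigr) + \res_y\bigl(\varpi(a,b)\omega\bigr) + \res_s^\omega\bigl(\kappa(a,b)\bigr) = 0,
$$
where the last term is the Rosenlicht residue $\res_s^\omega(f) = \sum_{q:\,\pi(q)=s}\res_q(f\omega)$ from (\ref{E:canpairing}). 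Hence the criterion (\ref{E:critskewsymm}) is equivalent to the vanishing of the residue at $s$.

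This is exactly where condition (3) enters. Since $x,y \ne s$, the sections $a$ and $b$ are regular at $s$, so their germs lie in $A_s \subset A_K$; by the very definition of the pairing we get $\res_s^\omega\bigl(\kappa(a,b)\bigr) = \kappa^\omega(a,b)$, and the coisotropy of $A_s$ gives $\kappa^\omega(a,b)=0$. Thus the residue at $s$ vanishes and (\ref{E:critskewsymm}) holds for all admissible $(a,b)$; in the smooth case there is no singular point and the residue identity already reads $\res_x\bigl(\varpi(a,b)\omega\bigr)+\res_y\bigl(\varpi(a,b)\omega\bigr)=0$, which explains why condition (3) is imposed only for singular $E$. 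By Proposition \ref{P:skewsymm} the matrix $\rho$ is skew--symmetric, and together with Theorem \ref{T:GCYBE} this gives the classical Yang--Baxter equation. The step I expect to require the most care is the precise form of the residue theorem at $s$: one must check that $\kappa(a,b)\,\omega$ is a genuine meromorphic differential on $E$ (using that $\omega$ generates $\Gamma(E,\Omega_E)$ as a Rosenlicht form) and that grouping the residues on $\PP^1$ by fibers of $\pi$ reproduces exactly the Rosenlicht residue $\res_s^\omega$, so that the global residue theorem transfers correctly from $\PP^1$ to $E$.
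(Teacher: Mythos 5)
Your proposal is correct and follows essentially the same route as the paper's own proof: invoke Theorem \ref{T:GCYBE} for GCYBE, then verify the criterion (\ref{E:critskewsymm}) of Proposition \ref{P:skewsymm} via the residue theorem on $E$, with the residue at $s$ killed exactly by the coisotropy condition (3). Your closing caveat about checking that the Rosenlicht residue $\res_s^\omega$ agrees with the fiberwise sum of residues on $\PP^1$ is precisely how the paper defines $\res_s^\omega$, so nothing is missing.
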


\begin{remark}
Note that the condition (3) is automatically satisfied provided the sheaf $\kA$ is locally free.
\end{remark}

\begin{proof}
According to Theorem \ref{T:GCYBE}, the section $\rho$ satisfies GCYBE. Hence, it is sufficient to prove skew--symmetry of $\rho$. Let $x \ne y \in \breve{E}$ and $U := E \setminus \{x, y\}$. Consider the following canonical map
$$
\varpi: \Gamma\bigl(E, \kA(x)\bigr) \times \Gamma\bigl(E, \kA(y)\bigr) \lar \Gamma(U, \kA)  \times \Gamma(U, \kA) \lar A_K \times A_K
\stackrel{\kappa}\lar K.
$$
Then for any $(f, g) \in \Gamma\bigl(E, \kA(x)\bigr) \times \Gamma\bigl(E, \kA(y)\bigr)$ we have: $\varpi(f, g) \in K \cap \Gamma\bigl(\breve{E} \setminus \{x, y\}, \kO_E\bigr)$. By  the  residue theorem, we have the identity:
$$
0 = \sum\limits_{q \in E} \res_q\bigl(\varpi(f, g)\omega\bigr) = \res_s\bigl(\varpi(f, g)\omega\bigr) + \res_x\bigl(\varpi(f, g)\omega\bigr) + \res_y\bigl(\varpi(f, g)\omega\bigr),
$$
since $\res_q\bigl(\varpi(f, g)\omega\bigr) = 0$ for $q \notin \bigl\{s, x, y\bigr\}$. However, $\res_s\bigl(\varpi(f, g)\omega\bigr) = 0$ since
$A_s \subset A_K$ is coisotropic. As a consequence:
$$
\res_x\bigl(\varpi(f, g)\omega\bigr) + \res_y\bigl(\varpi(f, g)\omega\bigr) = 0 \; \mbox{for all} \; (f, g) \in \Gamma\bigl(E, \kA(x)\bigr) \times \Gamma\bigl(E, \kA(y)\bigr),
$$
what is equivalent to the skew--symmetry $\rho^{12}(x, y) = - \rho^{21}(y, x)$ due to Proposition \ref{P:skewsymm}.
\end{proof}

\begin{remark}\label{R:Yangian}
Let $E$ be a singular Weierstra\ss{} curve, $\PP^1 \stackrel{\nu}\lar E$ its
 normalization, $q \in \PP^1$ such that $\nu(q) = s$ and $\kI$ be the ideal sheaf of $q$.
Consider the sheaf of Lie algebras
$$
\kA := \nu_\ast
\left(\mbox{\it sl}\left(
\begin{array}{cccc}
\kI & \kI & \dots & \kI\\
\kI & \kI & \dots & \kI \\
\vdots & \vdots & \ddots & \vdots \\
\kI & \kI & \dots & \kI\\
\end{array}
\right)
\right).
$$
Obviously, we have: $H^0(E, \kA) = 0 = H^1(E, \kA)$ and $\Gamma(\breve{E}, \kA) \cong \lieg \otimes
\Gamma\bigl(\breve{E}, \kO_E\bigr)$.
Moreover,
\begin{itemize}
\item If $E$ is cuspidal, then $A_s$ is a coisotropic Lie subalgebra of $A_K$ and the corresponding solution of (\ref{E:CYBE}) is the solution of Yang: $r(x, y) = \dfrac{1}{y-x} \gamma$.
\item If $E$ is nodal then $A_s$ is not coisotropic in  $A_K$. The corresponding geometric $r$--matrix is $r(x, y) = \dfrac{x}{y-x} \gamma$. It satisfies the generalized classical Yang--Baxter equation
    (\ref{E:GCYBE}) but does not satisfy (\ref{E:CYBE}).
\end{itemize}
\end{remark}

\begin{theorem}\label{T:CYBEfromVB}
Let $X$ be a  Calabi--Yau curve and $\kA$ a coherent sheaf of Lie algebras on $X$ such that
\begin{enumerate}
\item $H^0(X, \kA) = 0 = H^1(X, \kA)$.
\item The sheaf $\kA$ is weakly $\lieg$--locally free on $X$.
\end{enumerate}
Let $\omega$ be a generator of $\Gamma(X, \Omega_X)$ and $C = Z \cap \breve{X}$, where $Z$ is an irreducible component of $X$ and $\breve{X}$ is the regular part of $X$. The the geometric r--matrix $\rho \in \Gamma\bigl(C \times C\setminus \Delta, \kA \boxtimes \kA\bigr)$, attached to the datum
$\bigl((X, \kA), (C, \omega)\bigr)$, is a skew--symmetric solution
of \textrm{CYBE}.
\end{theorem}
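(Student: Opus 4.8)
The plan is to reduce everything to the residue criterion of Proposition~\ref{P:skewsymm} and then to discharge that criterion by the residue theorem on the complete curve $X$. By Theorem~\ref{T:GCYBE}, the section $\rho$ already satisfies the generalized equation (\ref{E:geomGCYBE}), so all that remains is to prove that $\rho$ is skew--symmetric. By Proposition~\ref{P:skewsymm}, skew--symmetry is equivalent to the identity
\begin{equation*}
\res_x\bigl(\varpi(f,g)\,\omega\bigr) + \res_y\bigl(\varpi(f,g)\,\omega\bigr) = 0
\end{equation*}
for all $x \ne y \in C$ and all $(f,g) \in \Gamma\bigl(X, \kA(x)\bigr) \times \Gamma\bigl(X, \kA(y)\bigr)$, where $\varpi(f,g) = \kappa\bigl(f|_U, g|_U\bigr)$ on $U := C \setminus \{x,y\}$. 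Thus the whole theorem comes down to this single residue identity, and the argument will parallel the proof of Theorem~\ref{T:CYBEtorsfree}.

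The decisive structural difference from Theorem~\ref{T:CYBEtorsfree} is that here $\kA$ is weakly $\lieg$--locally free on \emph{all} of $X$ rather than only on $\breve{X}$. Since $X$ is reduced, this forces $\kA$ to be genuinely locally free, and the fibrewise Killing forms glue to a global pairing of $\kO_X$--modules $\kappa \colon \kA \otimes \kA \lar \kO_X$. Consequently, for $f \in \Gamma\bigl(X, \kA(x)\bigr)$ and $g \in \Gamma\bigl(X, \kA(y)\bigr)$, the function $h := \kappa(f,g)$ is a \emph{global} section of $\kO_X(x+y)$, i.e.~a meromorphic function on the whole of $X$ whose only poles are (at worst simple) at the smooth points $x, y \in C$. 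By construction $h|_U = \varpi(f,g)$, so the identity above says precisely that the meromorphic $1$--form $h\,\omega$ has vanishing total residue over $\{x,y\}$.

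I would then invoke the residue theorem for the complete Gorenstein curve $X$ (with Rosenlicht residues at the singular points): $\sum_{p \in X} \res_p(h\,\omega) = 0$. The form $h\,\omega$ is a meromorphic section of $\Omega_X$ whose poles lie only at $x$ and $y$, so a priori only these two smooth points together with the singular points of $X$ can contribute. The crux is that at every singular point $s$ the function $h$ is regular, whence $h\,\omega$ is a \emph{regular} section of the dualizing sheaf $\Omega_X$ near $s$; taking the regular function $1$ in the defining property of $\Omega_X$ then shows that the Rosenlicht residue $\sum_{q:\,\nu(q)=s} \res_q(h\,\omega)$ vanishes, where $\widetilde{X} \stackrel{\nu}\lar X$ is the normalization. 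Hence the residue theorem collapses to $\res_x(h\,\omega) + \res_y(h\,\omega) = 0$, which is exactly the identity we need; by Proposition~\ref{P:skewsymm}, $\rho$ is skew--symmetric and therefore solves CYBE.

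I expect the only real obstacle to lie in the control of $h$ at the singular locus of $X$. In Theorem~\ref{T:CYBEtorsfree} the corresponding vanishing $\res_s\bigl(\varpi(f,g)\omega\bigr) = 0$ had to be imposed by hand through the coisotropy hypothesis (3) on the germ $A_s$; here the analogous statement must instead be extracted for free from local freeness. Concretely, one has to check carefully that $\kappa(f,g)$ really extends across the nodes and cusps of $X$ as a regular function --- so that $h\,\omega$ is a bona fide regular dualizing form there --- and that the Rosenlicht residue of a regular section of $\Omega_X$ indeed vanishes. Once these two facts are secured, the global residue theorem does the rest.
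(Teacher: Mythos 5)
Your proposal is correct and follows essentially the same route as the paper: the authors likewise observe that GCYBE is already settled by Theorem \ref{T:GCYBE}, reduce skew--symmetry to the criterion of Proposition \ref{P:skewsymm}, and discharge it via the residue theorem on $X$, with the vanishing at singular points coming from the fact that weak $\lieg$--local freeness on all of the reduced curve $X$ forces $\kA$ to be locally free there. If anything, you spell out more carefully than the paper does (which only remarks that local freeness ``plays a crucial role'') why $\kappa(f,g)$ is regular at the singular points and why a regular Rosenlicht section of $\Omega_X$ has vanishing residue.
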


\begin{proof}
  Here we basically give a replica of the proof of Theorem \ref{T:CYBEtorsfree}. We know that  $\rho$ satisfies GCYBE, thus it suffices to prove the skew--symmetry of $\rho$. Let $x \ne y \in C$ and $U := X \setminus \{x, y\}$. Consider the following canonical map
$$
\varpi: \Gamma\bigl(X, \kA(x)\bigr) \times \Gamma\bigl(X, \kA(y)\bigr) \lar \Gamma(U, \kA)  \times \Gamma(U, \kA) \lar A_K \times A_K
\stackrel{\kappa}\lar K.
$$
Then for any $(f, g) \in \Gamma\bigl(X, \kA(x)\bigr) \times \Gamma\bigl(X, \kA(y)\bigr)$ we have: $\varpi(f, g) \in  \Gamma\bigl(X \setminus \{x, y\}, \kO_X\bigr) \subset K$. The residue theorem implies that
$$
0 = \sum\limits_{q \in X} \res_q\bigl(\varpi(f, g)\omega\bigr) =  \res_x\bigl(\varpi(f, g)\omega\bigr) + \res_y\bigl(\varpi(f, g)\omega\bigr),
$$
since $\res_q\bigl(\varpi(f, g)\omega\bigr) = 0$ for  any smooth point $q \notin \bigl\{x, y\bigr\}$ as well as for any singular point of $X$ (at this place, the assumption that the sheaf $\kA$ is locally free, plays a crucial role). By Proposition \ref{P:skewsymm} we get:  $\rho^{12}(x, y) = - \rho^{21}(y, x)$.
\end{proof}

\begin{remark}\label{R:AdP}
Let $X$ be a Calabi--Yau curve and $\kF$ a simple locally free sheaf on $X$  (i.e.~$\End_X(\kF) = \kk$) of rank $n \ge 2$ and multi--degree $\mathbbm{d}$.  Consider the  sheaf  $\kA:= {\mbox{\textit{Ad}}}_X(\kF)$ of  traceless endomorphisms of $\kF$,
defined through  the  short exact sequence
\begin{equation}\label{E:shortExactAd}
0 \lar \kA \lar \mbox{\emph{End}}_X(\kF) \stackrel{\tr}{\lar} \kO_X \lar 0,
\end{equation}
where $\tr$ is the  trace map. Clearly, $\kA$ is $\mathfrak{sl}_n(\CC)$--locally free and $H^0(X, \kA) = 0 = H^1(X, \kA)$. Moreover, at least for elliptic curves and Kodaira fibers of types $\mathrm{I}_m \; (m \ge 1)$, $\mathrm{II}, \mathrm{III}$  and $\mathrm{IV}$ it is known that the  isomorphy class of $\kA$ is determined  by \emph{purely discrete} data: the rank $n$ and the multi--degree of  $\mathbbm{d}$ of  $\kF$; see \cite[Proposition 2.19]{BH} and references therein.
\end{remark}

\subsection{Minimal model of a geometric datum} The setting of Theorem \ref{T:CYBEfromVB} seems to be more general than in the case of Theorem
\ref{T:CYBEtorsfree}, because of the occurrence of non--integral Calabi--Yau curves. However,  the contrary happens to be true.
\begin{theorem}
Let $X$ be a singular Calabi--Yau curve and $Y$ an irreducible component of $X$. Then the following results are true.
\begin{itemize}
\item There exists a Weierstra\ss{} curve $E$ and a morphism $X \stackrel{\mu}\lar E$ (called \emph{minimal model} of $X$ relative to $Y$) collapsing all irreducible components of $X$ different to  $Y$ to the singular point of $E$ and mapping $Y$ surjectively on $E$.
\begin{center}
\begin{tikzpicture}[scale=0.3]
    \draw [thick] (2,3)
    to [out=270,in=10] (0,0)
      to [out=-10,in=90]
	 ( 2,-3);
\node[point] at (0,0){};
   \draw[thick,->](-6,0) to node[above]{$\mu$} (- 2,0);
 \draw[thick] (-10, -3 ) to (-10, 3);
 \draw[red,thick] (-12.5, 2) to  (-7.5,-2);
 \draw[red, thick] (-7.5, 2) to  (-12.5,-2);
\end{tikzpicture}
\end{center}
    \item If $\omega$ is a generator of  $\Gamma(X, \Omega_X)$, then $\mu_*\bigl(\omega\big|_Y\bigr)$ is a generator of $\Gamma(E, \Omega_E)$.
\item The canonical adjunction morphism
$
\kO_E \stackrel{\zeta}\lar R\mu_*\bigl(\kO_X\bigr)
$
in the derived category $D^b\bigl(\Coh(E)\bigr)$ is an isomorphism.
\end{itemize}
\end{theorem}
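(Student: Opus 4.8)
The plan is to realize $\mu$ as the contraction of $X$ attached to a degree--three linear system supported on $Y$, to identify its image with a Weierstra\ss{} cubic, and then to read off the two remaining assertions from the geometry of this contraction. If $X$ is irreducible it is itself a nodal or cuspidal Weierstra\ss{} cubic, and one takes $E = X$, $\mu = \id$, so all three statements are trivial. Assume then that $X$ is reducible and put $Z := \overline{X \setminus Y}$. Fix a smooth point $p \in \breve{X} \cap Y$ lying off $Y \cap Z$, and set $\mathcal{L} := \kO_X(3p)$, a line bundle of degree $3$ along $Y$ and degree $0$ along every other component. Since $\Omega_X \cong \kO_X$, Serre duality gives $h^1(\mathcal{L}) = h^0\bigl(\kO_X(-3p)\bigr)$; a global section of $\kO_X(-3p)$ restricts to $0$ on $Y \cong \PP^1$ and, by connectedness of $X$, then vanishes on all of $Z$, so $h^0\bigl(\kO_X(-3p)\bigr)=0$, whence $h^0(\mathcal{L}) = \chi(\mathcal{L}) = 3$. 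The constant section shows that $|\mathcal{L}|$ is base--point free, so it defines a morphism $\phi : X \lar \PP^2$; as $\mathcal{L}$ is trivial along each component of $Z$, its sections are constant there, so $\phi$ contracts the components of $Z$, while $\phi|_Y$ restricts to a birational morphism onto its image (the Weierstra\ss{} embedding attached to $3p$). I let $E := \phi(X) \subset \PP^2$ be the scheme--theoretic image and $\mu : X \lar E$ the induced morphism.

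Next I would identify $E$. Being the scheme--theoretic image of the reduced curve $X$, the sheaf $\kO_E$ embeds into $\phi_*\kO_X$ and is therefore reduced; since $Y$ dominates $E$, the cubic $E$ is integral, and as $Y \cong \PP^1$ maps birationally onto $E$ the normalization of $E$ is $\PP^1$. Hence $E$ is an integral plane cubic of arithmetic genus one and geometric genus zero, that is, a nodal or cuspidal Weierstra\ss{} cubic, with a unique singular point $s$. Because $E$ is normal away from $s$ and $\phi|_Y$ is generically injective, every point of $Y \cap Z$ necessarily maps to $s$; thus $\mu$ collapses $Z$ onto $s$ and restricts to a surjection $Y \lar E$, the type of $s$ (node versus cusp) being dictated by the local configuration of $X$ along $Y \cap Z$. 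This proves the first assertion.

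For the third assertion, the adjunction $\zeta : \kO_E \lar \mu_*\kO_X$ is injective, since $E$ is integral and $\mu$ is dominant, with cokernel a skyscraper at $s$ (as $\mu$ is an isomorphism over $E \setminus \{s\}$). The contracted fibre is $\mu^{-1}(s) = Z$, a configuration of rational curves of arithmetic genus zero, so $H^1(Z, \kO_Z) = 0$ and, by the theorem on formal functions, $R^1\mu_*\kO_X = 0$. The Leray spectral sequence then gives $\chi(\mu_*\kO_X) = \chi(\kO_X) = 0 = \chi(\kO_E)$, so $\coker(\zeta)$ has length zero and $\zeta$ is an isomorphism; therefore $\kO_E \xrightarrow{\;\cong\;} R\mu_*\kO_X$ in $D^b(\Coh(E))$.

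For the second assertion I would use the Rosenlicht description of the dualizing sheaves. As $\omega$ trivializes $\Omega_X \cong \kO_X$, its restriction $\omega|_Y$ is a nonzero rational one--form on $Y \cong \PP^1$ with at worst simple poles along $Y \cap Z$, whose residues obey the Rosenlicht relations cutting out $\Gamma(X, \Omega_X)$. Under $\mu|_Y$ these points are identified to $s$, and those relations become precisely the single Rosenlicht relation at $s$ defining $\Gamma(E, \Omega_E)$ inside the meromorphic forms on the normalization $\PP^1$ of $E$; hence $\mu_*(\omega|_Y) \in \Gamma(E, \Omega_E)$, and it is nonzero because $\omega|_Y \neq 0$. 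Since $\Gamma(E, \Omega_E) \cong \kk$ is one--dimensional, $\mu_*(\omega|_Y)$ is a generator. The main obstacle is the vanishing $R^1\mu_*\kO_X = 0$ underlying the third assertion: it rests on the contracted fibre $\mu^{-1}(s) = Z$ being a configuration of rational curves of arithmetic genus zero, which one verifies from the structure of the Calabi--Yau singularities (equivalently, that deleting one branch from an elliptic $m$--fold point yields an ordinary, seminormal point) using the explicit classification recalled in the introduction.
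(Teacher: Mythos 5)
Your proposal is correct in substance but takes a genuinely different route from the paper. The paper disposes of the first assertion by citing Silverman, calls the second a straightforward computation, and proves the third via Grothendieck duality: the counit $R\mu_*\mu^{!}(\kO_E) \lar \kO_E$ together with $\mu^{!}(\Omega_E) \cong \Omega_X$ produces a morphism $\mu_*(\kO_X) \lar \kO_E$, which is an isomorphism because $\mu_*(\kO_X)$ is torsion free over $\kO_E$ and $\Gamma\bigl(E, \mu_*(\kO_X)\bigr) \cong \kk$; the vanishing of $R^1\mu_*(\kO_X)$ is then extracted from a Grothendieck--group comparison of $R\Gamma(X, \kO_X)$ with $R\Gamma(E, \kO_E)$. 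You instead construct $\mu$ by hand from the linear system $\bigl|\kO_X(3p)\bigr|$ and kill $R^1\mu_*(\kO_X)$ by the theorem on formal functions, using $H^1(Z, \kO_Z) = 0$ --- which does hold, since deleting one branch of an elliptic $m$--fold point (or of a node, in the cycle case) leaves a seminormal configuration of rational curves with $p_a(Z) = 0$ --- and then conclude that $\zeta$ is an isomorphism from the Euler--characteristic count $\chi(\mu_*\kO_X) = \chi(\kO_X) = 0 = \chi(\kO_E)$. Your argument is more elementary (no appeal to $\mu^{!}$ or to duality), at the price of leaning harder on the explicit classification of Calabi--Yau curves; both routes are valid, and your explicit construction of $\mu$ replaces the paper's citation with an actual proof.

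Three local slips should be repaired, none of which invalidates the strategy. First, the constant section of $\kO_X(3p)$ has zero divisor $3p$, so it only rules out base points away from $p$; freeness at $p$ needs $h^0\bigl(\kO_X(2p)\bigr) = 2 < 3$, which follows by the same Serre--duality argument (similarly, birationality of $\phi\big|_Y$ onto its image uses injectivity of the restriction $H^0(X, \kO_X(3p)) \lar H^0\bigl(Y, \kO_X(3p)\big|_Y\bigr)$, again a connectedness argument). Second, your reason that $Y \cap Z$ maps to $s$ (normality of $E \setminus \{s\}$ plus generic injectivity of $\phi\big|_Y$) only works when $Y \cap Z$ contains at least two points, i.e.~for cycles; when $Y \cap Z$ is a single point $q$ (tacnode, concurrent lines) one needs the infinitesimal argument: functions constant on $Z$ restrict on $Y$ to functions whose differential vanishes at $q$, so $\kO_{E, \mu(q)}$ lands in $\kk + \mathfrak{m}_q^2 \subset \kO_{Y,q}$, which is impossible at a smooth point of $E$, where $\mu\big|_Y$ (being the normalization) is a local isomorphism. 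Third, and most substantively, the claim that $\omega\big|_Y$ has \emph{at worst simple poles} along $Y \cap Z$ is false outside the cycle case: at a tacnode or an elliptic $m$--fold point the generator of $\Omega_X$ restricts to $Y$ with a pole of order \emph{two} at each point of $Y \cap Z$ --- consistently with the fact that the Rosenlicht generator $dz$ of $\Omega_E$ at a cusp itself has a double pole with vanishing residue at $\pi^{-1}(s)$. After replacing ``simple poles and residue relations'' by ``poles of order at most two, with the Rosenlicht conditions at the singular point of $X$ going over into the Rosenlicht condition at $s$'', your one--dimensionality argument for $\Gamma(E, \Omega_E)$ closes the proof as intended.
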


\begin{proof} The list of Calabi--Yau curves is actually known, see e.g.~\cite[Section 3]{Smyth}. For the proof of the first statement, see \cite[Proposition 3.8]{Silverman}. The second result is a straightforward computation.

To prove the third statement, consider the right adjoint functor $D^b\bigl(\Coh(E)\bigr) \stackrel{\mu^{!}}\lar D^b\bigl(\Coh(X)\bigr)$ to the functor $R\mu_*$, which exists according   to the Grothendieck duality \cite{RD, Neeman}. We have another canonical adjunction morphism
$
R\mu_* \mu^{!}\bigl(\kO_E\bigr) \stackrel{\xi}\lar \kO_E,
$
which is compatible with restrictions on open sets in $E$. Since both curves $E$ and $X$ are Calabi--Yau, and $\mu^{!}(\Omega_E) \cong \Omega_X$, we get a morphism of sheaves
$
\mu_* \bigl(\kO_X\bigr) \stackrel{\bar\xi}\lar \kO_E,
$
which is an isomorphism when restricted to the open set $\breve{E}$. On the other hand, we have another  canonical morphism of sheaves
$\kO_E \stackrel{\bar\zeta}\lar \mu_*\bigl(\kO_X\bigr)$. It is clear that $\mu_*\bigl(\kO_X\bigr)$ is a sheaf of Cohen--Macaulay rings, finite as
a module over $\kO_E$. Therefore, $\mu_*\bigl(\kO_X\bigr)$ is torsion free viewed a $\kO_E$--module, thus $\bar\xi$ is a monomorphism. Since
$\Gamma\bigl(E, \mu_*\bigl(\kO_X\bigr)\bigr) \cong \kk$, we conclude that $\bar\xi$ is an isomorphism. But it implies  that $\bar\zeta$ is an isomorphism, too.

Since the fiber of the morphism $\mu$ over the point $s$  has dimension one, the cohomology sheaves $R^{i}\mu_*\bigl(\kO_X\bigr)$ are automatically zero  unless
$i = 0$ or $1$. We have proved that $R^{0}\mu_*\bigl(\kO_X\bigr) \cong \kO_E$.  The sheaf $\kT := R^{1}\mu_*\bigl(\kO_X\bigr)$ is automatically torsion.
 Assume that $\kT$ is non--zero. Then its   support must be  the singular point $s$. We have an exact triangle
$$
\kO_E \stackrel{\zeta}\lar R\mu_*\bigl(\kO_X\bigr) \lar \kT[-1] \lar \kO_E[1]
$$
in the derived category $D^b\bigl(\Coh(E)\bigr)$. Applying the derived functor of global sections $R\Gamma\bigl(E, \,-\,\bigr)$, we get an exact triangle
$$
R\Gamma\bigl(E, \kO_E\bigr) \xrightarrow{R\Gamma(\zeta)} R\Gamma\bigl(E, R\mu_*\bigl(\kO_X\bigr)\bigr) \lar R\Gamma(E,\kT)[-1] \lar R\Gamma\bigl(E, \kO_E\bigr)[1]
$$
in the derived category $D^b\bigl(\mathsf{Vect}(\kk)\bigr)$ of vector spaces over $\kk$. We have a canonical isomorphism of complexes $R\Gamma\Bigl(E, R\mu_*\bigl(\kO_X\bigr)\Bigr) \cong R\Gamma\bigl(X,  \kO_X\bigr)$ and $H^i\bigl(X, \kO_X\bigr) \cong \kk \cong  H^i\bigl(E, \kO_E\bigr)$ for $i = 0, 1$. It implies that the class
 of the complex $R\Gamma(E,\kT)$ in the Grothendieck group of $D^b\bigl(\mathsf{Vect}(\kk)\bigr)$ is zero. On the other hand,
$R^i \Gamma(E, \kT) = H^i(E, \kT) = 0$ for $i \ne 0$, whereas $\Gamma(E, \kT) \ne 0$, contradiction. Hence, the morphism $\zeta$ is an isomorphism, as claimed.
\end{proof}

\begin{proposition}\label{P:minimalmodelsheaves}
Let $X \stackrel{\mu}\lar E$ be the minimal model of a Calabi--Yau curve $X$ (with respect to some irreducible component). Then the following results are true.
\begin{itemize}
\item The functor $\Perf(E) \stackrel{L\mu^*}\lar D^b\bigl(\Coh(X)\bigr)$ is fully faithful.
\item Let $\kF$ be a coherent sheaf on $X$ such that $H^0(X, \kF) = 0 = H^1(X, \kF)$. Then $R^1 \mu_*(\kF) = 0$ and $H^0(X, \kG) = 0 = H^1(X, \kG)$ for
$\kG := \mu_*(\kF)$.
\end{itemize}
\end{proposition}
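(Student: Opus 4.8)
The plan is to derive both statements from the key output of the preceding theorem, namely that the canonical adjunction morphism $\kO_E \xrightarrow{\zeta} R\mu_*(\kO_X)$ is an isomorphism in $D^b\bigl(\Coh(E)\bigr)$, together with two geometric facts already recorded there: that $\mu$ restricts to an isomorphism over $\breve{E}$, and that the fibres of $\mu$ have dimension at most one. Since $\mu$ is a morphism of projective curves it is proper, so the derived pushforwards $R^q\mu_*$ are well behaved and the Leray spectral sequence is available.

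For the first assertion I would invoke the standard criterion that a left adjoint is fully faithful once the induced map on $\Hom$-spaces is bijective, which here is a direct computation. As $L\mu^*$ is left adjoint to $R\mu_*$, for $P, Q \in \Perf(E)$ I would write
$$
\Hom_{D^b(\Coh(X))}\bigl(L\mu^* P, L\mu^* Q\bigr) \cong \Hom_{D^b(\Coh(E))}\bigl(P, R\mu_* L\mu^* Q\bigr) \cong \Hom_{D^b(\Coh(E))}(P, Q),
$$
where the first isomorphism is the adjunction and the second uses the projection formula $R\mu_*\bigl(L\mu^* Q \dtens \kO_X\bigr) \cong Q \dtens R\mu_*(\kO_X)$ (valid because $Q$ is perfect) combined with $R\mu_*(\kO_X) \cong \kO_E$. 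This is the usual Bondal--Orlov style argument and is entirely routine given $\zeta$.

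For the second assertion the main tool is the Grothendieck spectral sequence $E_2^{p,q} = H^p\bigl(E, R^q\mu_*(\kF)\bigr) \Rightarrow H^{p+q}(X, \kF)$. First I would observe that $R^q\mu_*(\kF) = 0$ for $q \ge 2$ by the one-dimensionality of the fibres, and that $R^1\mu_*(\kF)$ is supported on the singular point $s$, since $\mu$ is an isomorphism over $\breve{E}$; hence $R^1\mu_*(\kF)$ is a skyscraper sheaf, so $H^p\bigl(E, R^1\mu_*(\kF)\bigr) = 0$ for $p \ge 1$ while $H^0\bigl(E, R^1\mu_*(\kF)\bigr)$ is just its stalk at $s$. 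Because $E$ is a curve we also have $H^p(E, \kG) = 0$ for $p \ge 2$. The spectral sequence thus has only the two rows $q = 0, 1$, and the sole potentially nonzero differential $d_2 \colon E_2^{0,1} \to E_2^{2,0}$ lands in $H^2(E, \kG) = 0$, so it degenerates at $E_2$. This produces the edge isomorphism $H^0(X, \kF) \cong H^0(E, \kG)$ together with the short exact sequence
$$
0 \lar H^1(E, \kG) \lar H^1(X, \kF) \lar H^0\bigl(E, R^1\mu_*(\kF)\bigr) \lar 0.
$$

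From $H^0(X, \kF) = 0$ the edge isomorphism gives $H^0(E, \kG) = 0$, and from $H^1(X, \kF) = 0$ the exact sequence gives \emph{simultaneously} $H^1(E, \kG) = 0$ and $H^0\bigl(E, R^1\mu_*(\kF)\bigr) = 0$; since the latter is the stalk of the skyscraper sheaf $R^1\mu_*(\kF)$ at $s$, its vanishing forces $R^1\mu_*(\kF) = 0$. I expect no genuine obstacle; the only delicate point is precisely this step, where a single hypothesis $H^1(X,\kF) = 0$ must yield both the cohomology vanishing for $\kG$ and the vanishing of $R^1\mu_*(\kF)$ — which is exactly what the degeneration of the two-row spectral sequence (equivalently, the surjectivity of the edge map onto $H^0\bigl(E, R^1\mu_*(\kF)\bigr)$, guaranteed by $H^2(E, \kG) = 0$) delivers.
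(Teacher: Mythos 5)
Your proposal is correct and takes essentially the same approach as the paper: the first part is the paper's projection--formula argument (using $R\mu_*(\kO_X) \cong \kO_E$) made slightly more explicit via the adjunction, and your two--row Leray spectral sequence is just a repackaging of the paper's truncation triangle $\kT[-2] \lar \kG[0] \lar R\mu_*(\kF) \lar \kT[-1]$ together with $R\Gamma\bigl(E, R\mu_*(\kF)\bigr) \cong R\Gamma(X, \kF) \cong 0$. Your five--term exact sequence delivers exactly the same vanishings as the paper's long exact hypercohomology sequence, with $R^1\mu_*(\kF) = 0$ deduced in both cases from the vanishing of global sections of a torsion (skyscraper) sheaf supported at $s$.
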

\begin{proof}
By the projection formula, for any complex $\kH^{\bul}$ from $\Perf(E)$ we have:
$$
R\mu_* \bigl(L\mu^*(\kH^{\bul})\bigr) \cong \kH^{\bul} \stackrel{\mathbb{L}}\otimes R\mu_* \bigl(\kO_X\bigr) \cong \kH^{\bul},
$$
which  implies the first statement. To prove the second result, let $\kT := R^1 \mu_*(\kF)$. Then $\kT$ is a torsion sheaf and we have an exact triangle
\begin{equation}\label{E:triangle}
\kT[-2] \lar \kG[0] \stackrel{\imath}\lar R\mu_*(\kF) \lar \kT[-1],
\end{equation}
where the morphism of sheaves $\kG \xrightarrow{\mathcal{H}^0(\imath)}  R^0\mu_*(\kF)$ is an isomorphism. Because of the vanishing $H^0(X, \kF) = 0 = H^1(X, \kF)$ we have: $R\Gamma\bigl(E, R\mu_*(\kF)\bigr) \cong 0$ in $D^b\bigl(\mathsf{Vect}(\kk)\bigr)$. Applying the functor $R\Gamma(E, \,-\,)$ to  the exact triangle (\ref{E:triangle}), we conclude that  $H^1(E, \kG) = 0$ and $H^0(E, \kT) = 0$. The last vanishing implies that $\kT \cong 0$.
\end{proof}

\begin{theorem}\label{T:CYBEandWeierModel}
Let $X$ be a Calabi--Yau curve, $Y$ an irreducible component of $X$ and $C = Y \cap \breve{X}$, where $\breve{X}$ is the regular part of $X$. Let  $X \stackrel{\mu}\lar E$ be the  minimal model of $X$ relative to $Y$ and $\widetilde\omega$ the restriction of a generator of $\Gamma(X, \Omega_X)$ on $C$.  Let $\kB$ be a sheaf of Lie algebras on $X$ such that
\begin{enumerate}
\item $H^0(X, \kB) = 0 = H^1(X, \kB)$.
\item $\kB$ is weakly $\lieg$--locally free on $X$.
\end{enumerate}
Let $\kA := \mu_*(\kB)$ and $\omega = \mu_*(\widetilde\omega)$. Then the geometric $r$--matrices, corresponding to the data $\bigl((X, \kB), (C, \widetilde\omega)\bigr)$ and
$\bigl((E, \kA), (\breve{E}, \omega)\bigr)$, lead to the same solution of (\ref{E:CYBE}).
\end{theorem}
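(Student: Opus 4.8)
The plan is to prove equality of the two geometric $r$--matrices by identifying, point by point, the linear maps $\rho^\sharp(x_1,x_2)$ attached to the two data through Lemma \ref{L:rmatrandmaps}. First I would record that the minimal model $\mu$ restricts to an isomorphism $C \to \breve{E}$: indeed $\mu$ collapses every irreducible component of $X$ different from $Y$ to the singular point $s$ of $E$ and contracts the points of $Y$ lying over $s$, so away from these loci it is an isomorphism, and $C = Y \cap \breve{X}$ is precisely the part of $Y$ on which $\mu$ maps isomorphically onto $\breve{E}$. Under this isomorphism the sheaf $\kA^\circ = \kA\big|_{\breve E} = (\mu\big|_C)_*(\kB\big|_C)$ is identified with $\kB\big|_C$, the fibers satisfy $\kB\big|_x \cong \kA\big|_{\mu(x)}$ for $x \in C$, and since $\omega = \mu_*(\widetilde\omega)$ (compatibly with $\mu_*(\omega\big|_Y)$ being a generator of $\Gamma(E,\Omega_E)$, as established for minimal models) the form $\widetilde\omega$ is carried to $\omega$. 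Consequently the two $r$--matrices already live in one and the same space $\Gamma\bigl(C\times C\setminus\Delta,\,\kA^\circ\boxtimes\kA^\circ\bigr) \cong \Gamma\bigl(\breve E\times\breve E\setminus\Delta,\,\kA\boxtimes\kA\bigr)$, and it remains to compare them.

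Next I would identify the spaces of global sections entering the construction. Fix $x \in C$ with image $\bar x = \mu(x)\in\breve E$. Since $x$ avoids the exceptional locus of $\mu$, we have $\kO_X(x) \cong \mu^*\kO_E(\bar x)$, so the projection formula gives
$$
R\mu_*\bigl(\kB(x)\bigr)\cong R\mu_*(\kB)\otimes\kO_E(\bar x).
$$
By Proposition \ref{P:minimalmodelsheaves} applied to $\kB$ we have $R^1\mu_*(\kB)=0$ and $\mu_*(\kB)=\kA$, whence $R\mu_*\bigl(\kB(x)\bigr)\cong \kA(\bar x)$; in particular $R^1\mu_*\bigl(\kB(x)\bigr)=0$, and the Leray spectral sequence degenerates to a canonical isomorphism
$$
\Gamma\bigl(X,\kB(x)\bigr)\;\cong\;\Gamma\bigl(E,\kA(\bar x)\bigr).
$$

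The decisive step is then to check that this identification intertwines the residue and evaluation maps of diagram (\ref{E:rmatrixandmaps}). Both the residue isomorphism $\res_x^{\widetilde\omega}$ and the evaluation map $\ev_{y}$ are built from the residue sequence (\ref{E:ResSequence}) and depend only on the behaviour of sections in arbitrarily small neighbourhoods of the smooth points $x$ and $y$. Since $\mu$ is an isomorphism there and carries $\widetilde\omega$ to $\omega$, the functoriality of the residue sequence (Lemma \ref{L:PropertiesResSeq}) shows that $\res_x^{\widetilde\omega}$ on $X$ corresponds to $\res_{\bar x}^{\omega}$ on $E$ under the above isomorphisms of global sections and of fibers, and likewise $\ev_{x}$ corresponds to evaluation on $E$. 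Thus the two commutative diagrams (\ref{E:rmatrixandmaps}) defining $\rho^\sharp(x_1,x_2)$ for the two data are identified, so the linear maps agree for all $x_1\neq x_2$ in a common open subset $C^\circ$; by Lemma \ref{L:rmatrandmaps} the two $r$--matrices coincide as sections, and each is a solution of (\ref{E:CYBE}) by Theorem \ref{T:CYBEtorsfree} (resp.\ Theorem \ref{T:CYBEfromVB}).

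The main obstacle I anticipate is precisely the clean identification $\Gamma\bigl(X,\kB(x)\bigr)\cong\Gamma\bigl(E,\kA(\bar x)\bigr)$ together with the vanishing $R^1\mu_*\bigl(\kB(x)\bigr)=0$: this is what allows the residue construction on $X$ to descend to $E$, and it rests squarely on the higher direct image vanishing of Proposition \ref{P:minimalmodelsheaves}. Once this is secured, the compatibility of the residue and evaluation maps is a purely local matter at smooth points where $\mu$ is an isomorphism, and is essentially formal.
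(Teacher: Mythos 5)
Your proposal is correct and follows the same overall strategy as the paper: reduce the comparison of the two geometric $r$--matrices, via Lemma \ref{L:rmatrandmaps}, to showing that the canonical identification of $\Gamma\bigl(X,\kB(\tilde{x})\bigr)$ with $\Gamma\bigl(E,\kA(x)\bigr)$ intertwines the residue and evaluation maps, the key input being the vanishing of higher direct images from Proposition \ref{P:minimalmodelsheaves}. The one place where you genuinely diverge is in how that identification is produced. The paper chooses \emph{generic} points $\tilde{x}\ne\tilde{y}$ with $H^i\bigl(X,\kB(\tilde{x}-\tilde{y})\bigr)=0=H^i\bigl(X,\kO_X(\tilde{x}-\tilde{y})\bigr)$ so that Proposition \ref{P:minimalmodelsheaves} applies to the twisted sheaves, and then pushes forward the residue and evaluation short exact sequences, verifying commutativity of the explicit diagrams (\ref{E:DiagResid}) and (\ref{E:DiagEv}). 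You instead invoke the projection formula: since $\tilde{x}$ avoids the exceptional locus, $\kO_X(\tilde{x})\cong\mu^*\kO_E(x)$, whence $R\mu_*\bigl(\kB(\tilde{x})\bigr)\cong\kA(x)$, which yields the isomorphism of section spaces and the vanishing $R^1\mu_*\bigl(\kB(\tilde{x})\bigr)=0$ in one stroke, \emph{at every} point of $C$ rather than only generically. This is a genuine streamlining. What the paper's route buys in exchange is that the intertwining of $\res$ and $\ev$ is checked explicitly at the level of exact sequences, whereas you assert it as ``purely local and essentially formal''; that assertion is correct (the projection--formula isomorphism is compatible with restriction to $\breve{E}$, where $\mu$ is an isomorphism carrying $\widetilde\omega$ to $\omega$, so Lemma \ref{L:PropertiesResSeq} applies), but a careful write--up should record this compatibility --- which is exactly the content of the paper's diagrams. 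Finally, note that your conclusion holds a priori only on the locus where $\rho^\sharp$ is defined and the residue map is invertible; since both $r$--matrices are regular sections of $\kA^\circ\boxtimes\kA^\circ$ on $C\times C\setminus\Delta$ agreeing on a dense open subset, they coincide everywhere, so this is harmless.
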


\begin{proof}
Let $\tilde{x} \ne  \tilde{y}\in C$ are such that $H^i\bigl(X, \kB(\tilde{x} - \tilde{y})\bigr) = 0 = H^i\bigl(X, \kO_X(\tilde{x} - \tilde{y})\bigr)$ for $i = 0, 1$; let
  $x := \mu(\tilde{x})$ and $y := \mu(\tilde{y})$. Then the following diagram  of coherent sheaves on $E$
$$
\xymatrix{
0 \ar[r] & \kO_E \ar[r] \ar[d]_{\cong} & \kO_E(x) \ar[r]^-{\res_x^\omega} \ar[d]^-{\cong} & \CC_x  \ar[r] \ar[d]^-{\cong} &  0 \\
0 \ar[r] & \mu_*\bigl(\kO_X\bigr) \ar[r]  & \mu_*\bigl(\kO_X(\widetilde{x})\bigr) \ar[r]^-{\mu_*(\res_{\widetilde{x}}^{\widetilde\omega})}  & \mu_*\bigl(\CC_{\tilde{x}}\bigr)  \ar[r]  &  0 \\
}
$$
is commutative, where all vertical maps are the canonical ones (the exactness of the lower sequence follows from the fact that $R^1 \mu_*(\kO_X) = 0$). Analogously, because of the vanishings $R^1 \mu_*(\kB) = 0 = R^1 \mu_*\bigl(\kB(\tilde{x} - \tilde{y})\bigr)$, we get   commutative diagrams
\begin{equation}\label{E:DiagResid}
\begin{tabular}{c}
\xymatrix{
0 \ar[r] & \kA \ar[r] \ar[d]_{\cong} & \kA(x) \ar[r]^-{\res_x^\omega} \ar[d]^-{\cong} & \kA\big|_x  \ar[r] \ar[d]^-{\cong} &  0 \\
0 \ar[r] & \mu_*(\kB) \ar[r]  & \mu_*\bigl(\kB(\tilde{x})\bigr) \ar[r]^-{\mu_*(\res_{\widetilde{x}}^{\widetilde\omega})}  & \mu_*\bigl(\kB\big|_{\tilde{x}}\bigr)  \ar[r]  &  0 \\
}
\end{tabular}
\end{equation}
and
\begin{equation}\label{E:DiagEv}
\begin{tabular}{c}
\xymatrix{
0 \ar[r] & \kA(x-y) \ar[r] \ar[d]_{\cong} & \kA(x) \ar[r]^-{\ev_y} \ar[d]^-{\cong} & \kA\big|_y  \ar[r] \ar[d]^-{\cong} &  0 \\
0 \ar[r] & \mu_*\bigl(\kB(\tilde{x}-\tilde{y})\bigr) \ar[r]  & \mu_*\bigl(\kB(\tilde{x})\bigr) \ar[r]^-{\mu_*(\ev_{\tilde{y}})}  & \mu_*\bigl(\kB\big|_{\tilde{y}}\bigr)  \ar[r]  &  0.
}
\end{tabular}
\end{equation}
Applying the functor of global sections $\Gamma(E, \,-\,)$ to both diagrams (\ref{E:DiagResid}) and (\ref{E:DiagEv}), we get a commutative diagram of vector spaces
$$
\xymatrix{
\kA\big|_x \ar[d]_-\cong  & &  \ar[ll]_-{\res_x^\omega} \Gamma\bigl(E, \kA(x)\bigr) \ar[rr]^-{\ev_y} \ar[d]^-\cong  & & \kA\big|_y \ar[d]^-\cong \\
\kB\big|_{\tilde{x}}  & &  \ar[ll]_-{\res_{\tilde{x}}^{\widetilde\omega}} \Gamma\bigl(X, \kB(\tilde{x})\bigr) \ar[rr]^-{\ev_{\tilde{y}}}  & & \kB\big|_{\tilde{y}}
}
$$
where all vertical isomorphisms are the canonical ones. The statement of the theorem follows now from Lemma \ref{L:rmatrandmaps}.
\end{proof}

\begin{remark}\label{E:CYnonreduced}
So far, a Calabi--Yau curve by definition was assumed to be \emph{reduced}. However, this assumption can be weakened. Namely,   it would be
 sufficient to consider
a Gorenstein projective curve $X$ with trivial dualizing sheaf, which has at least one reduced component $Y$ (e.g.~the  plane cubic $PV(uv^2) \subset \PP^2_\kk$).
If $\kF$ is a simple locally free sheaf on $X$ then the sheaf $\kB:= {\mbox{\textit{Ad}}}_X(\kF)$ of  traceless endomorphisms of $\kF$ has vanishing cohomology.
Let $X \stackrel{\mu}\lar E$ be the minimal model of $X$ relative to $Y$
\begin{center}
\begin{tikzpicture}[scale=0.3]
    \draw [thick] (2,3)
    to [out=270,in=10] (0,0)
      to [out=-10,in=90]
	 ( 2,-3);
\node[point] at (0,0){};
   \draw[thick,->](-6,0) to node[above]{$\mu$} (- 2,0);
 \draw[thick] (-10, -2.5 ) to (-10, 2.5);
 \draw[red,  very thick] (-7, 0) to  (-13,-0);
\end{tikzpicture}
\end{center}
and $\omega$ be a generator of the vector space $\Gamma\bigl(E, \Omega_E\bigr)$.
Let $\widetilde{\omega}$ be a meromorphic 1--form on $C = Y \cap \breve{X}$ such that $\omega = \mu_*(\widetilde\omega)$ and  $\kA := \mu_*(\kB)$.
Then $\bigl((X, \kB), (C, \widetilde\omega)\bigr)$ is a well--defined geometric datum in the sense of Subsection \ref{SS:GeometrizationGCYBE}.
Moreover, $\kO_E \cong \mu_*\bigl(\kO_X\bigr)$  and $H^i(E, \kA) = 0$ for $i = 0, 1$, since the proof of Proposition \ref{P:minimalmodelsheaves} does not use the assumption that $X$ is reduced.
By Theorem \ref{T:CYBEandWeierModel},
the $r$--matrices, corresponding to the geometric data  $\bigl((X, \kB), (C, \widetilde\omega)\bigr)$ and
$\bigl((E, \kA), (\breve{E}, \omega)\bigr)$, are equivalent. Moreover, the obtained solution of GCYBE is also skew--symmetric. Indeed, for any open subset  $U \subset E$,  the following  diagram of vector spaces
$$
\xymatrix{
\Gamma(U, \kA) \times \Gamma(U, \kA) \ar@{^{(}->}[r] \ar[dd]_-{\cong} & A_K \times A_K  \ar[r]^-{\kappa}  & K \\
                                                     & & \Gamma\bigl(U, \kO_E\bigr) \ar[d]^-{\cong} \ar@{_{(}->}[u]\\
\Gamma\bigl(\mu^{-1}(U), \kB\bigr) \times \Gamma\bigl(\mu^{-1}(U), \kB\bigr)  \ar[rr]^-{\kappa}  & & \Gamma\bigl(\mu^{-1}(U), \kO_X\bigr),
}
$$
is commutative, implying that $\mathsf{Im}\bigl(\Gamma(U, \kA) \times \Gamma(U, \kA) \stackrel{\kappa}\lar K\bigr) = \Gamma(U, \kO_E)$.
Hence, $A_s$ is a coisotropic Lie subalgebra of $A_K$ with respect to the pairing $\kappa^\omega$ given by (\ref{E:canpairing}).
 By Theorem \ref{T:CYBEtorsfree},
the geometric $r$--matrix corresponding to $\bigl((E, \kA), (\breve{E}, \omega)\bigr)$ is skew--symmetric. It is an interesting open problem to classify  simple locally
free sheaves on non--reduced Calabi--Yau curves and compute the corresponding solutions of CYBE.
\end{remark}

\subsection{Geometric $r$--matrices and bialgebra structures}\label{SS:Liebialgebras} In this subsection, let $E$ be a Weierstra\ss{} cubic, $p \in E$ its ``infinite point'' and
$s \in E$ its singular point provided $E$ is singular.  Let  $\kA$  be a coherent sheaf of Lie algebras on $E$, satisfying all  properties  from  Theorem \ref{T:CYBEtorsfree}. Consider the smooth affine curve
$$
E^\circ :=
 \left\{
 \begin{array}{cl}
  E \setminus \{p\}  & E \mbox{\rm \; is smooth}, \\
  E \setminus \{s\} &   E \mbox{\rm \; is singular}
 \end{array}
 \right.
$$
and put $A:= \Gamma\bigl(E^\circ, \kA\bigr)$. According to Theorem \ref{T:CYBEtorsfree}, the geometric $r$--matrix
$\rho \in \Gamma\bigl(E^\circ \times E^\circ\setminus \Delta, \kA \boxtimes \kA\bigr)$ is a skew--symmetric solution of the classical Yang--Baxter equation.

\begin{proposition}
The $\kk$--linear map
$
A \stackrel{\theta}\lar A \otimes_\kk A$ given by the formula $f \mapsto \bigl[f\otimes 1 + 1 \otimes f, \rho\bigr]$, is  skew--symmetric and  satisfies the so--called \emph{co--Jacobi identity}
\begin{equation}\label{E:coJacobi}
\tau \circ \bigl((\theta \otimes \mathsf{id}) \otimes \theta\bigr)= 0, \quad \mbox{\rm where}\quad \tau(a_1 \otimes a_2 \otimes a_3) =
a_1 \otimes a_2 \otimes a_3 +
a_3 \otimes a_1 \otimes a_2 + a_2 \otimes a_3 \otimes a_1
\end{equation}
for any simple tensor $a_1 \otimes a_2 \otimes a_3 \in A \otimes_\kk A \otimes_\kk A$. In other words,
the pair $(A, \theta)$ is a \emph{Lie bialgebra}.
\end{proposition}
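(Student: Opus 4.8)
The plan is to verify the three conditions defining a Lie bialgebra cobracket: that $\theta$ is well defined (its values are genuinely regular, i.e.\ lie in $A\otimes_\kk A$ and not merely in a space of sections with poles along the diagonal), that it is skew--symmetric, and that it satisfies the co--Jacobi identity. The compatibility (cocycle) condition $\theta([f,g]) = \ad_f\theta(g) - \ad_g\theta(f)$ needs no separate argument: $\theta$ is a coboundary, $\theta(\ARG) = \ad_{(\ARG)}(\rho)$ at the level of meromorphic sections, and since $\ad$ is a Lie--algebra action one has $\theta([f,g]) = \ad_{[f,g]}(\rho) = [\ad_f,\ad_g](\rho) = \ad_f\theta(g)-\ad_g\theta(f)$, all terms being regular by well--definedness.

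First I would check well--definedness. A priori $\bigl[f\otimes 1 + 1\otimes f, \rho\bigr]$ is only a section of $\kA\boxtimes\kA$ over $E^\circ\times E^\circ$ with a possible simple pole along the diagonal $\Delta$, inherited from the pole of $\rho$. By the local presentation (\ref{E:structureRmat}) the polar part of $\rho$ equals $\tfrac{\nu(x_2)}{x_1-x_2}\widetilde\gamma$, and the restriction $\widetilde\gamma\big|_\Delta$ is the Casimir element $\gamma$. Hence the residue of $\theta(f)$ along $\Delta$ is a multiple of $\bigl[f\otimes 1 + 1\otimes f, \gamma\bigr]\big|_\Delta$, which vanishes by (\ref{E:Casimir}). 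Thus $\theta(f)$ extends to a regular section; since $E^\circ$ is affine we have $\Gamma\bigl(E^\circ\times E^\circ, \kA\boxtimes\kA\bigr)\cong A\otimes_\kk A$, so indeed $\theta(f)\in A\otimes_\kk A$. Skew--symmetry is then an immediate consequence of the skew--symmetry of $\rho$ from Theorem \ref{T:CYBEtorsfree}. Writing $\sigma$ for the flip of the two tensor factors and $\imath$ for the transposition of the two spectral points, that theorem gives $\sigma\,\imath^{*}(\rho) = -\rho$. The flip on $A\otimes_\kk A$ is realized on sections by $\sigma\,\imath^{*}$, the element $f\otimes 1 + 1\otimes f$ is invariant under $\sigma\,\imath^{*}$, and $\sigma\,\imath^{*}$ is an automorphism of the sheaf of Lie algebras; applying it to $\theta(f)$ gives $\sigma\bigl(\theta(f)\bigr) = \bigl[f\otimes 1 + 1\otimes f, \sigma\,\imath^{*}(\rho)\bigr] = -\theta(f)$, so $\theta(f)\in\Lambda^2 A$.

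The main step is the co--Jacobi identity, and this is where I expect the real work to lie. One expands the cyclic symmetrization $\tau\circ(\theta\otimes\id)\circ\theta$ applied to $f$, viewed as a section of $\kA\boxtimes\kA\boxtimes\kA$ on the triple product in spectral variables $(x_1,x_2,x_3)$. Each of the two applications of $\theta$ contributes a bracket against a copy of $\rho$; after cyclic symmetrization these two copies get distributed over the three slots, producing all the pairings $\rho^{12},\rho^{13},\rho^{23}$. Using the Jacobi identity in $A$ repeatedly, together with the Casimir invariance (\ref{E:Casimir}) to move the diagonal adjoint action of $f$ past the Casimir parts of the various $\rho^{ij}$, the whole expression collapses, up to sign, to
\begin{equation*}
\bigl[f^{(1)} + f^{(2)} + f^{(3)},\ \bigl[\rho^{12},\rho^{13}\bigr] + \bigl[\rho^{12},\rho^{23}\bigr] + \bigl[\rho^{13},\rho^{23}\bigr]\bigr],
\end{equation*}
where $f^{(j)}$ denotes $f$ placed in the $j$--th slot (evaluated at $x_j$). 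By Theorem \ref{T:CYBEtorsfree} the inner bracket is exactly the left--hand side of CYBE and therefore vanishes identically, so the co--Jacobiator is zero. The delicate part is precisely this collapse: organizing the roughly dozen nested--bracket terms produced by the two coproducts, keeping track of which spectral parameter each tensor leg carries, and checking that every term outside the adjoint action on the CYBE element is cancelled by the Jacobi identity and by (\ref{E:Casimir}). Once this is done, combining well--definedness, skew--symmetry, co--Jacobi, and the automatic cocycle identity shows that $(A,\theta)$ is a Lie bialgebra.
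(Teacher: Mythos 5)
Your proposal is correct and takes essentially the same route as the paper: you establish regularity of $\theta(f)$ along $\Delta$ from the local presentation (\ref{E:structureRmat}) together with the Casimir invariance (\ref{E:Casimir}) (your residue formulation is an equivalent repackaging of the paper's splitting of the polar term), deduce skew--symmetry from Theorem \ref{T:CYBEtorsfree}, and reduce the co--Jacobi identity to the standard coboundary computation collapsing the co--Jacobiator to the adjoint action on the CYBE left--hand side --- precisely the argument the paper delegates to \cite[Lemma 2.1.3]{ChariPressley}. No gaps beyond the level of detail the paper itself leaves to that citation.
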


\begin{proof}
For any $f \in A$ we have:
$
g:= \theta(f) =  \bigl[f\otimes 1 + 1 \otimes f, \rho\bigr] \in \Gamma\bigl(E^\circ \times E^\circ\setminus \Delta, \kA \boxtimes \kA\bigr).
$
We first have to show  that   $g$ has no pole along
$\Delta$, i.e.~$g\in \Gamma\bigl(E^\circ \times E^\circ, \kA \boxtimes \kA\bigr)$. For any $x_1 \ne x_2 \in
E^\circ$ we have:
$
g(x_1, x_2) = \Bigl[f(x_1)\otimes 1 + 1 \otimes f(x_2), \dfrac{\nu(x_2)}{x_1-x_2}\widetilde{\gamma} + \upsilon(x_1, x_2)\Bigr],
$
where a local presentation (\ref{E:structureRmat}) of $\rho$ is used. Next, we have:
$$
g(x_1, x_2) =
 \dfrac{\nu(x_2)}{x_1-x_2} \cdot\Bigl(\Bigl[f(x_1) \otimes 1 + 1 \otimes f(x_1), \widetilde{\gamma}\Bigr] +
\Bigl[1 \otimes \bigl(f(x_1)-f(x_2)\bigr), \widetilde{\gamma}\Bigr]\Bigr) + $$
$$\Bigl[f(x_1)\otimes 1 + 1 \otimes f(x_2), \upsilon(x_1, x_2)\Bigr].
$$
Hence, the section $g$ is indeed regular, as claimed. The co--Jacobi identity (\ref{E:coJacobi}) can be proven along the same lines as in \cite[Lemma 2.1.3]{ChariPressley}.
\end{proof}

\begin{remark}\label{R:ellipticsolutions}
Let $E$ be an elliptic curve. All indecomposable (in particular, all simple) locally free sheaves  on $E$ were described by Atiyah \cite{Atiyah}.
According to his classification, a simple
locally free sheaf  $\kF$ on $E$ is uniquely determined by its rank $n$, degree $d$ and determinant
$
\mathsf{det}(\kF) \cong \kO_E\bigl([q] + (d-1)[p]\bigr)\in \Pic^d(E) \cong E;
$
in this case we automatically have: $\mathsf{gcd}(n, d) = 1$. It follows from \cite{Atiyah}, that the sheaf of Lie algebras $\kA_{(n,d)} = \Ad(\kF)$ does not depend on the moduli parameter $q \in E$ and is determined by the pair $(n, d)$, which can be without  loss of generality normalized by the condition  $0 \le  d < n$.  It is  easy  to show, that $\kA_{(n, d)} \cong \kA_{(n, n-d)}$ as sheaves of Lie algebras, see \cite[Proposition 2.14]{BH}.

The geometric $r$--matrix, attached to the pair $(E, \kA_{n,d})$, gives precisely an elliptic solution of Belavin; see \cite[Theorem 5.5]{BH}. In particular, all elliptic solutions of (\ref{E:CYBE1}) can be  realized geometrically. The Lie
algebra $A_{(2,1)} = \Gamma\bigl(E^\circ, \kA_{(2,1)}\bigr)$  (expressed in terms of generators and relations) appeared for the first time  in a work of Golod \cite{Golod}.
Combining  \cite[Proposition 5.1]{BH} and \cite[Theorem 3.1]{SkrypnykEllAlg}, one gets a description of the Lie algebra $A_{(n,d)}$ in terms of generators and relations for general mutually prime $(n, d)$.
Poisson structures, related with the Lie bialgebra structure of $A_{(n,d)}$ as well as applications to the theory of classical integrable systems
 were studied  in works of Reyman and Semenov--Tian--Shansky \cite{ReymanST} and Hurtubise and Markman \cite{HurtubiseMarkman}.  See also \cite{KurokiTakebe} for yet another appearance  of this Lie bialgebra in conformal field theory. In
 \cite[Section 5]{GinzburgKapranovVasserot}, Ginzburg, Kapranov and Vasserot studied   $A_{(n, d)}$ with tools   of the geometric representation theory.
\end{remark}

\begin{remark} Let $(E, \kA)$ be a pair from Theorem \ref{T:CYBEtorsfree}. Assume additionally that $E$ is singular and $A_K \cong \lieg \otimes_\kk K$.
In this case, it can be shown\footnote{We are thankful to Oksana Yakimova for explaining this fact to us.}, that $A \cong \lieg \otimes_\kk \Gamma\bigl(E^\circ, \kO_E\bigr).$ Let $\PP^1 \stackrel{\nu}\lar E$ be the normalization map.
Then the following results are true.

\smallskip
\noindent
1.~Assume $E$ is nodal. We can choose homogeneous  coordinates on $E$ in such a way that $\nu^{-1}(s) = \bigl\{(0:1), (1: 0)\bigr\}$. Then we can put
$\omega = \dfrac{dz}{z}$ and write $\Gamma\bigl(E^{\circ}, \kO_E\bigr)\cong \CC[z, z^{-1}]$.  Then there exists an isomorphism
$A \stackrel{\xi}\lar  \lieg[z, z^{-1}]$ and the corresponding skew--symmetric solution $r = \rho^\xi$ of (\ref{E:CYBE}) takes  the form
\begin{equation}\label{E:genquasitrig}
r(x_1, x_2) = \frac{x_2}{x_1 - x_2} \gamma + v(x_1, x_2),
\end{equation}
where $v \in \CC[x_1^\pm, x_2^\pm]$. In other words, under these assumptions, we get a bialgebra structure on the Lie algebra $\lieg[z, z^{-1}]$.
By the  same argument as in  \cite[Theorem 19]{KPSST}, one can prove that $r$ is gauge--equivalent to a trigonometric solution of (\ref{E:CYBE1}).

Now, assume $X$ is a cycle of projective lines, $\kF$ a simple vector bundle on $X$ and $Y$ an irreducible component of $X$. Let
$X \stackrel{\mu}\lar E$ be the minimal model of $X$ relative to $Y$ and $\kA:= \mu_*\bigl(\Ad(\kF)\bigr)$. In this case it  can be  shown  that
$v \in \CC[x_1, x_2]$, i.e.~the solution $r$ is \emph{quasi--trigonometric} in the sense of
\cite{KPSST}.

\smallskip
\noindent
2.~If  $E$ is cuspidal, then we can choose homogeneous  coordinates on $E$ in such a way that $\nu^{-1}(s) = (1: 0)$. Then take
$\omega = dz$ and write $\Gamma\bigl(E^{\circ}, \kO_E\bigr)\cong \CC[z]$.  Similarly to the previous case,  there exists an isomorphism
$A \stackrel{\xi}\lar  \lieg[z]$ and the corresponding skew--symmetric solution $r = \rho^\xi$ of (\ref{E:CYBE}) has the form
\begin{equation}\label{E:genrat}
r(x_1, x_2) = \frac{1}{x_1 - x_2} \gamma + v(x_1, x_2),
\end{equation}
where $v \in \CC[x_1, x_2]$. In other words, $r$ is a rational skew--symmetric solution of (\ref{E:CYBE}) in the sense of Stolin \cite{Stolin}.

All Lie bialgebra structures on the Lie algebras $\lieg\llbracket z\rrbracket$ and $\lieg[z]$ were classified  by Montaner, Stolin and Zelmanov  in \cite{MSZ}. It is an interesting open problem to extend their methods  to  the case of the Lie algebra $\lieg[z, z^{-1}]$ as well as  to relate their classification to the geometric approach to CYBE, developed in this paper.
\end{remark}

\begin{remark} Let  $X$ be a  Kodaira cycle of projective lines, $C$ the regular part of its irreducible component  and $\kF$ a simple locally free sheaf on $X$ of rank $n \ge 2$. According to  Polishchuk \cite{Polishchuk1}, the geometric $r$--matrices  arising  from a datum  $\bigl((X, \Ad(\kF)), (C, \omega)\bigr)$ satisfy  a \emph{stronger} version
of CYBE:
\begin{equation}\label{E:sCYBE}
\left\{
\begin{array}{l}
\pi^{\otimes 3}\bigl(r(x_1,x_2)^{12} r(x_2,x_3)^{23} - r(x_1,x_3)^{13} r(x_1,x_2)^{12}  -
r(x_2,x_3)^{23} r(x_1,x_3)^{13}\bigr) = 0  \\
\pi^{\otimes 3}\bigl(r(x_1,x_2)^{12} r(x_1,x_3)^{13} + r(x_1,x_3)^{13} r(x_2,x_3)^{23} - r(x_2,x_3)^{23}
r(x_1,x_2)^{12}\bigr) = 0,
\end{array}
\right.
\end{equation}
where $\mathfrak{gl}_n(\CC)  \stackrel{\pi}\longrightarrow \mathfrak{pgl}_n(\CC)$ is the canonical projection. Assume that $r$ is a quasi--constant (quasi--)trigonometric solution of CYBE (i.e.~the function $v$ in (\ref{E:genquasitrig}) is constant). It has been proven by Schedler in \cite[Theorem 3.4]{Schedler} that the constraints  (\ref{E:sCYBE}) uniquely determine the ``continuous part'' of a solution for  a given   Belavin--Drinfeld triple. Already this observation shows that the class of pairs $\bigl((X, \Ad(\kF)), (C, \omega)\bigr)$ is too narrow to realize all quasi--trigonometric solutions of CYBE in a geometric way.
\end{remark}

\section{Geometrization of the rational solutions}
The goal of this section is to show that any non--degenerated skew--symmetric rational solution of the classical Yang--Baxter (\ref{E:CYBE}) arises from a pair $(E, \kA)$ as in Theorem \ref{T:CYBEtorsfree}, where $E$ is the cuspidal Weierstra\ss{} cubic.

\subsection{Lie algebra over the affine cuspidal curve from a rational solution of CYBE}
In what follows, $\lieg$ is a finite dimensional simple Lie algebra over $\CC$ of dimension $d$ and
$\CC^2  \stackrel{r}\lar \lieg \otimes \lieg$ a non--degenerate skew--symmetric  solution of CYBE of the form (\ref{E:genrat}).
\begin{itemize}
\item Consider the following symmetric non--degenerate  pairing of the Lie algebra $\lieg\llbrace z\rrbrace$:
\begin{equation}\label{E:RatPairing1}
\lieg\llbrace z\rrbrace \times \lieg\llbrace z\rrbrace \stackrel{\bar\kappa}\lar \CC, \quad \bar\kappa(az^l, bz^k) := \res_0\bigl(\kappa(a, b)z^{l+k}dz\bigr) \; \mbox{\rm for any}\; a, b \in \lieg.
\end{equation}
\item Let $e_1, \dots, e_d$ be some orthonormal basis of $\lieg$ with respect to the Killing form $\kappa$. Obviously, we have:
$\gamma = \sum_{l=1}^d e_l \otimes e_l$. Moreover, we have a formal power series expansion
\begin{equation}\label{E:expRatSol}
r(x, y) = \frac{1}{y - x} \gamma + v(x, y) = \sum\limits_{k=0}^\infty \sum\limits_{l = 1}^d \bigl(y^{-k-1} e_l + p_{k, l}(y)\bigr) \otimes x^k e_l,
\end{equation}
where $p_{k, l} \in \lieg[z]$ for any $k \in \NN_0$ and $1 \le l \le d$. Clearly, $p_{k, l} = 0$ for any $k \ge t+1$, where $t = \mathsf{deg}_x(v)$.
\item For any pair $(k, l)$ as above, we put $f_{k, l}:= e_l z^{-k-1} + p_{k, l} \in \lieg[z, z^{-1}]$ and
denote
\begin{equation}
W:= \bigl\langle f_{k, l} \; \big| \; k \in \NN_0, 1 \le l \le d\bigr\rangle_{\CC} \subset \lieg[z, z^{-1}] \subset \lieg\llbrace z\rrbrace.
\end{equation}
\end{itemize}
In order to show, how a rational solution $r$ defines a sheaf of Lie algebras on the cuspidal Weierstra\ss{} curve,  we begin with
 the following statement.
\begin{theorem} Let $r$ be a non--degenerate skew--symmetric rational solution of (\ref{E:CYBE}). Then the  following results are true.
\begin{itemize}
\item We have a direct sum decomposition
\begin{equation}\label{E:LagrDecRational}
\lieg\llbrace z\rrbrace = \lieg\llbracket z\rrbracket \dotplus W,
\end{equation}
i.e.~$\lieg\llbracket z\rrbracket +  W = \lieg\llbrace z\rrbrace$ and $\lieg\llbracket z\rrbracket \cap W = 0$.
\item The vector space $W$ is a Lagrangian Lie subalgebra of $\lieg\llbrace z\rrbrace$, i.e.~$W^\perp = W$ with respect to the pairing $\bar\kappa$. In other words, the decomposition (\ref{E:expRatSol}) is a
\emph{Manin triple}.
\item Let $S := \CC\bigl[z^{-2}, z^{-3}\bigr]$. Then we have: $S \cdot W = W$. Moreover, $W$ is a finitely generated torsion free $S$--module such that
$\CC\llbrace z\rrbrace \otimes_S W \cong \lieg\llbrace z\rrbrace$.
\end{itemize}
\end{theorem}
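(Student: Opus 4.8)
The plan is to treat the three assertions in order, since each builds on the previous one. Throughout I write $p_{k,l}=\sum_j p_{k,l}^{(j)}z^j$ with $p_{k,l}^{(j)}\in\lieg$, and $d=\dim\lieg$. For the decomposition (\ref{E:LagrDecRational}) I would consider the reduction map $W\to\lieg\llbrace z\rrbrace/\lieg\llbracket z\rrbracket$. Since $f_{k,l}=e_l z^{-k-1}+p_{k,l}$ with $p_{k,l}\in\lieg[z]$, this map sends $f_{k,l}$ to the class of $e_l z^{-k-1}$, and $\{e_l z^{-k-1}\}_{k\ge0,\,1\le l\le d}$ is a basis of $z^{-1}\lieg[z^{-1}]\cong\lieg\llbrace z\rrbrace/\lieg\llbracket z\rrbracket$. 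Hence the reduction map is a linear isomorphism, which gives simultaneously $\lieg\llbracket z\rrbracket\cap W=0$ and $\lieg\llbracket z\rrbracket+W=\lieg\llbrace z\rrbrace$, and shows that the $f_{k,l}$ form a basis of $W$.

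For the Lagrangian property I first prove that $W$ is isotropic. Extracting the coefficient of $z^{-1}$ in $\kappa\bigl(f_{k,l}(z),f_{m,n}(z)\bigr)$ yields $\bar\kappa(f_{k,l},f_{m,n})=\kappa(p_{m,n}^{(k)},e_l)+\kappa(p_{k,l}^{(m)},e_n)$. Skew-symmetry of $r$ says exactly that $v(x,y)=-\sigma\bigl(v(y,x)\bigr)$ (the $\gamma$-part being automatically skew), and comparing coefficients this reads $\kappa(p_{k,l}^{(m)},e_n)=-\kappa(p_{m,n}^{(k)},e_l)$; the two summands cancel, so $W$ is isotropic. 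Since $\lieg\llbracket z\rrbracket$ is Lagrangian for $\bar\kappa$ and $W$ is an isotropic complement to it, $W$ is automatically maximal isotropic: for $u\in W^\perp$ its $\lieg\llbracket z\rrbracket$-component is orthogonal both to $\lieg\llbracket z\rrbracket$ and to $W$, hence to all of $\lieg\llbrace z\rrbrace$, hence zero, so $u\in W$ and $W=W^\perp$. That $W$ is moreover a Lie subalgebra is the classical correspondence between non-degenerate solutions of CYBE and Manin triples $\bigl(\lieg\llbrace z\rrbrace,\lieg\llbracket z\rrbracket,W\bigr)$; this is the one step that genuinely uses the Yang--Baxter equation rather than mere skew-symmetry, and I would invoke it (cf.\ \cite{ReymanST}, \cite[Lecture 6]{EtingofSchiffmann}, \cite[Lemma 2.1.3]{ChariPressley}) or reprove it by the same residue bookkeeping.

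The third assertion is the new point. As $S$ is generated as a $\CC$-algebra by $z^{-2},z^{-3}$ and contains $1$, the claim $S\cdot W=W$ reduces to $z^{-2}W\subseteq W$ and $z^{-3}W\subseteq W$. From the explicit shape of the basis one has, for $n=2,3$, the identity $z^{-n}f_{m,n'}=f_{m+n,n'}+\bigl(z^{-n}p_{m,n'}-p_{m+n,n'}\bigr)$, so since $f_{m+n,n'}\in W$ it suffices to show that each Laurent-polynomial correction $z^{-n}p_{m,n'}-p_{m+n,n'}$ lies in $W$. Using the already established equality $W=W^\perp$, this membership is equivalent to the vanishing of the residues $\bar\kappa\bigl(z^{-n}p_{m,n'}-p_{m+n,n'},\,f_{k,l}\bigr)$ for all $k,l$. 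These residues split into a part linear in $v$, killed by the skew-symmetry relation above, and a part quadratic in $v$, which cannot be handled by skew-symmetry and must be extracted from the quadratic relations encoded in CYBE. I expect this verification to be the main obstacle of the theorem; it is exactly where rationality is indispensable, since the polynomiality of $v$ is what makes the corrections honest Laurent polynomials of bounded degree and the whole check finite.

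Granting $S\cdot W=W$, the remaining finiteness statements are routine. Torsion-freeness is immediate, as $W\subseteq\lieg\llbrace z\rrbrace$, a vector space over $\CC\llbrace z\rrbrace\supseteq S$. For finite generation I would filter $W$ by pole order, $F_mW=W\cap z^{-m}\lieg\llbracket z\rrbracket$; the symbol of $f_{m-1,l}$ is $e_l z^{-m}$, so $\mathrm{gr}\,W\cong z^{-1}\lieg[z^{-1}]$ as a graded module over $\mathrm{gr}\,S=S$, which is generated in pole-orders $1$ and $2$. Thus $\mathrm{gr}\,W$, and hence $W$, is finitely generated over $S$, with generators $\{f_{0,l},f_{1,l}\}_{1\le l\le d}$. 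Finally every $f_{k,l}$ lies in $\lieg[z,z^{-1}]$, whose $S$-rank is $d$, giving $\rk_S W\le d$; and the $\CC\llbrace z\rrbrace$-linear map $\CC\llbrace z\rrbrace\otimes_S W\to\lieg\llbrace z\rrbrace$ induced by the inclusion is surjective, because the matrix expressing $\{z f_{0,l}\}_l$ in the basis $\{e_n\}$ has the form $I+zN(z)$ with $N\in\Mat_d(\CC\llbracket z\rrbracket)$ and is therefore invertible over $\CC\llbrace z\rrbrace$. Comparing dimensions yields $\rk_S W=d$ and the isomorphism $\CC\llbrace z\rrbrace\otimes_S W\cong\lieg\llbrace z\rrbrace$.
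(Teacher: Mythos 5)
Your first two bullets are correct and essentially coincide with the paper's own proof: the direct sum decomposition via the principal parts of the $f_{k,l}$, the computation $\bar\kappa\bigl(f_{k,l},f_{m,n}\bigr)=\kappa\bigl(p_{m,n}^{(k)},e_l\bigr)+\kappa\bigl(p_{k,l}^{(m)},e_n\bigr)$ and its vanishing being equivalent to skew--symmetry, maximality of the isotropic $W$ from the decomposition together with $\lieg\llbracket z\rrbracket^\perp=\lieg\llbracket z\rrbracket$ and non--degeneracy of $\bar\kappa$, and the Lie subalgebra property via the standard Manin--triple correspondence (the paper cites the same sources). The genuine gap is in the third bullet, which you yourself identify as ``the new point'': you reduce $S\cdot W=W$ to the vanishing of the residues $\bar\kappa\bigl(z^{-n}p_{m,n'}-p_{m+n,n'},\,f_{k,l}\bigr)$ for $n=2,3$, observe that these contain contributions quadratic in $v$, and then defer the verification (``I expect this verification to be the main obstacle''). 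That deferred verification \emph{is} the content of the theorem, and it is not established; moreover your side claim that the $v$--linear part is killed by skew--symmetry alone does not hold up --- skew--symmetry relates $\kappa\bigl(p_{m,n'}^{(k+n)},e_l\bigr)$ and $\kappa\bigl(p_{m+n,n'}^{(k)},e_l\bigr)$ to other coefficients of $v$ rather than cancelling them against each other, so the linear and quadratic terms must be treated together, and no direct CYBE bookkeeping is offered (or known) that closes this.

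The paper takes a structural route precisely because of this. From $W=W^\perp$ and the fact that $p_{k,l}=0$ for $k\ge t+1$ (so that $z^{-t-1}\lieg\bigl[z^{-1}\bigr]\subseteq W$), it first derives the sandwich $z^{-t-1}\lieg\bigl[z^{-1}\bigr]\subseteq W\subseteq z^{t-1}\lieg\bigl[z^{-1}\bigr]$, which yields stability and finite generation only over the much smaller ring $T=\CC\bigl[z^{-2t},z^{-2t-1},\dots\bigr]$, not over $S$. To upgrade $T$ to $S$, it passes to the completion $\widehat{W}$, recognizes $\lieg\llbrace z^{-1}\rrbrace=\lieg[z]\dotplus\widehat{W}$ as the Lagrangian decomposition of Stolin's theory of rational solutions, and invokes Stolin's theorem: there exists $\varphi\in\Aut_{\CC[z]}\bigl(\lieg[z]\bigr)$ with $\widehat{W}':=\varphi(\widehat{W})\subseteq z\,\lieg\llbracket z^{-1}\rrbracket$. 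Since then $z^{-1}\lieg\llbracket z^{-1}\rrbracket=\bigl(z\,\lieg\llbracket z^{-1}\rrbracket\bigr)^\perp\subseteq\widehat{W}'\subseteq z\,\lieg\llbracket z^{-1}\rrbracket$, multiplication by $z^{-2},z^{-3}$ visibly preserves $\widehat{W}'$; because $\varphi$ extends $\CC\llbrace z^{-1}\rrbrace$--linearly, $S$ stabilizes $\widehat{W}$ as well, and $W=\widehat{W}\cap\lieg\bigl[z,z^{-1}\bigr]$ transports the stability back to $W$. So the missing step in your proposal is exactly where the paper imports a nontrivial external input (Stolin's results, resting on maximal orders in loop algebras), and without it --- or a substitute for it --- your proof of the third assertion does not go through. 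Your finiteness endgame (filtration by pole order, rank count, and the invertibility of $I+zN(z)$ giving $\CC\llbrace z\rrbrace\otimes_S W\cong\lieg\llbrace z\rrbrace$) is fine, but only once $S\cdot W=W$ is granted.
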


\begin{proof} It follows from the definition of the vector space $W$ that  any $0 \ne w \in W$ has a non--trivial Laurent part. This implies that
$\lieg\llbracket z\rrbracket \cap W = 0$. Next, for any $k \in \NN_0$ and  $1 \le l \le d$ we have:
$e_l z^{-k-1} = f_{k, l} - p_{k, l} \in  W + \lieg\llbracket z\rrbracket$. Therefore, $z^{-1} \lieg[z^{-1}] \subseteq W + \lieg\llbracket z\rrbracket$, hence $\lieg\llbracket z\rrbracket +  W = \lieg\llbrace z\rrbrace$ as claimed.

Next, let us prove that $W$ is a  coisotropic subspace of $\lieg\llbrace z\rrbrace$, i.e.~$W \subseteq W^\perp$. We have to show that
$\bar\kappa(w_1, w_2) = 0$ for any $w_1, w_2 \in W$. To do this, it is sufficient to take elements of the basis of $W$. We have:
\begin{equation*}
\bar\kappa\bigl(f_{k,l}, f_{r,t}\bigr) =
\bar\kappa\bigl(e_l z^{-k-1} + p_{k, l}, e_t z^{-r-1} + p_{r, t}\bigr) = \bar\kappa\bigl(e_l z^{-k-1}, p_{r, t}\bigr) + \bar\kappa\bigl( p_{k, l},
e_t z^{-r-1}\bigr).
\end{equation*}
Let us write
$
p_{k,l}(z) = \sum\limits_{m = 0}^\infty \sum\limits_{j = 1}^d \alpha_{m,j}^{k,l} \bigl(e_j z^m\bigr)$,  where $\alpha_{m,j}^{k,l} \in \CC$.
Then the statement that the subspace $W$ is coisotropic, is equivalent to
\begin{equation}\label{E:coisotropskewsymm}
0 = \bar\kappa\bigl(f_{k,l}, f_{r,t}\bigr) = \alpha_{r,t}^{k,l} + \alpha_{k,l}^{r,t} \; \mbox{\rm for any}\; k, r \in \NN_0 \;
\mbox{\rm and}\; 1 \le l, t \le d.
\end{equation}
It is not difficult to show, that the equality (\ref{E:coisotropskewsymm}) is equivalent to the \emph{skew--symmetry} of the solution $r$. Moreover, it is easy to see that $\lieg\llbracket z\rrbracket^\perp = \lieg\llbracket z\rrbracket$. Since we already proved that $\lieg\llbrace z\rrbrace = \lieg\llbracket z\rrbracket \dotplus W$, the non--degeneracy  of the pairing  $\bar\kappa$  implies  that  $W = W^\perp$.

The statement   that $W$ is a Lie subalgebra of $\lieg\llbrace z\rrbrace$ is a consequence of the assumption that $r$ satisfies the Yang--Baxter equation
(\ref{E:CYBE}); see for example \cite[Proposition 6.2]{EtingofSchiffmann}.

Now, let us prove that the Lie subalgebra $W$ is stable under the multiplication by the elements of the ring  $S$. We start with the observation that
$z^{-t-1} \lieg\bigl[z^{-1}\bigr] \subseteq W$, thus
$$
W = W^\perp \subseteq \bigl(z^{-t-1} \lieg\bigl[z^{-1}\bigr]\bigr)^\perp = z^{t-1} \lieg\bigl[z^{-1}\bigr].
$$
Therefore, we have shown that
\begin{equation}\label{E:BoundsSpaceW}
z^{-t-1} \lieg\bigl[z^{-1}\bigr] \subseteq W \subseteq z^{t-1} \lieg\bigl[z^{-1}\bigr].
\end{equation}
Consider the $\CC$--algebra  $T := \CC\bigl[z^{-2t}, z^{-2t-1}, \dots \bigr] \subset \CC\bigl[z^{-1}\bigr]$. Since,
$$T \cdot \bigl(z^{t-1} \lieg\bigl[z^{-1}\bigr]\bigr) = z^{-t-1} \lieg\bigl[z^{-1}\bigr],$$ it follows
 from (\ref{E:BoundsSpaceW}) that $T \cdot W = W$ and moreover, that $W$ is a torsion free finitely generated module over  $T$.
 Consider the \emph{ring of multiplicators} of the Lie subalgebra $W$:
 $$
 B := \bigl\{f \in \CC\llbrace z\rrbrace \big| f\cdot W \subseteq W\bigr\}.
 $$

 \smallskip
 \noindent
 \underline{Claim}. We have: $T \subseteq B \subseteq \CC\bigl[z^{-1}\bigr]$.

\smallskip
 \noindent Indeed, the inclusion $T \subseteq B$ is clear. Let us first show that $B \subseteq \CC\bigl[z, z^{-1}\bigr]$.  Assume there exists $f \in B \cap \bigl(\CC\llbrace z\rrbrace \setminus  \CC\bigl[z, z^{-1}\bigr]\bigr)$ (in other words, $f$ is a multiplicator of $W$ which is  an \emph{infinite} Laurent series). Since  $z^{-t-1} \lieg\bigl[z^{-1}\bigr] \subseteq W$, we have:  $a (z^{-t-1} f) \in W$ for any $a \in \lieg$. But obviously, $ a(z^{-t-1} f) \notin  \lieg\bigl[z, z^{-1}\bigr]$, which  contradicts (\ref{E:BoundsSpaceW}).

\smallskip
\noindent
 Now, let us prove that $B \subseteq \CC\bigl[z^{-1}\bigr]$. Put $\bar{t}:= \mathsf{max}\{\mathsf{deg}(w) \,\big| \, w \in W\}$. According to
(\ref{E:BoundsSpaceW}) we have: $\bar{t} \le t-1$. Let $w_0 \in W$ be such that $\mathsf{deg}(w_0) = \bar{t}$. If there exists a Laurent polynomial
$f \in B$ with
$\mathsf{deg}(f)> 0$ then $\mathsf{deg}(f\cdot w_0) \ge \bar{t}+1$, contradiction.

\smallskip
\noindent
Denote now $Y := \Spec(T)$. Then $Y$ is an affine curve with a unique singular point $s$ corresponding to the maximal ideal
$\mathfrak{m} = \langle z^{-2t},  z^{-2t-1}, \dots \rangle_\CC$ in $T$. The normalization map $\mathbb{A}^1 \stackrel{\pi}\lar Y$ is bijective, implying that $s$ is a (higher) cusp. Now, we  may  pass to the completions at the point $s$:
\begin{itemize}
\item $\widehat{\kO}_{Y, s} \cong \CC\bigl\llbracket z^{-2t}, z^{-2t-1}, \dots\bigr\rrbracket$.
\item $Q\bigl(\widehat{\kO}_{Y, s}\bigr) \cong \CC\llbrace z^{-1}\rrbrace$ (the quotient field of $\widehat{\kO}_{Y, s}$).
\item $\widehat{W}$ is the $\mathfrak{m}$--adic completion of the $T$--module $W$.
\end{itemize}
Because of (\ref{E:BoundsSpaceW}) we have:
$
\widehat{W} = z^{-t-1} \lieg\llbracket z^{-1}\rrbracket + \bigl\langle f_{k,l} \; \big|\, 0 \le k \le t, 1 \le i \le d\bigr\rangle_{\CC}.
$
From this description of $\widehat{W}$ we derive that
$W = \widehat{W} \cap \lieg\bigl[z, z^{-1}\bigr]$.
Furthermore, the following results are true.
\begin{itemize}
\item The symmetric $\CC$--bilinear pairing
\begin{equation}\label{E:RatPairing2}
\lieg\llbrace z^{-1}\rrbrace \times \lieg\llbrace z^{-1}\rrbrace \stackrel{\widehat{\kappa}}\lar \CC, \; \mbox{\rm where}\; \widehat\kappa(az^l, bz^k) := \res_0\bigl(\kappa(a, b)z^{l+k}dz\bigr) \; \mbox{\rm for}\; a, b \in \lieg
\end{equation}
is well--defined and non--degenerate.
\item  We have a Lagrangian decomposition (with respect to the pairing $\widehat{\kappa}$):
\begin{equation}\label{E:LagrDecRational2}
\lieg\llbrace z^{-1}\rrbrace = \lieg[z] \dotplus \widehat{W},
\end{equation}
\end{itemize}
Actually, the Lagrangian decomposition (\ref{E:LagrDecRational2}) is precisely the one occurring in Stolin's theory of rational solutions of CYBE.
According to results of \cite{Stolin, Stolin2}, there exists an automorphism  $\varphi \in \Aut_{\CC[z]}\bigl(\lieg[z]\bigr)$ such that
$\widehat{W}' := \varphi(\widehat{W}) \subseteq z \lieg\llbracket z^{-1}\rrbracket$. Therefore,
$$
\bigl(z \lieg\llbracket z^{-1}\rrbracket\bigr)^\perp = z^{-1} \lieg\llbracket z^{-1}\rrbracket \subseteq \bigl(\widehat{W}'\bigr)^\perp = \widehat{W}'
\subseteq z \lieg\llbracket z^{-1}\rrbracket.
$$
Hence, the ring $S = \CC[z^{-2}, z^{-3}]$ stabilizes $\widehat{W}'$. Hence, it stabilizes $\widehat{W}$, too.
Since $W = \widehat{W} \cap \lieg\bigl[z, z^{-1}\bigr]$, we have: $S \cdot W = W$, as claimed.
Since $W$ is a finitely generated torsion free module over $T$, it is finitely generated and torsion free over $S$, too. Theorem is proven.
\end{proof}

\subsection{Spectral data and Beauville--Laszlo construction}
The affine cuspidal cubic $E_\circ := \Spec(S)$ admits a one--point--compactification to a projective cuspidal cubic
$E = \overline{V(v^2 - u^3)} \subset \PP^2$. Let $p \in E$ be the ``infinite'' point of $E$. Let $\widehat{O} = \widehat{\kO}_{E, p}$ be the completion of the local ring of $E$ at $p$ and $\widehat{Q}$ be the quotient field of $\widehat{O}$. Then we have a canonical map
$\Gamma(E_\circ, \kO_E) \stackrel{l_p}\lar \widehat{Q}$, assigning to a regular function on $E_\circ$ its formal Laurent expansion
at the point $p$. We have the following commutative diagram of $\CC$--algebras:
\begin{equation}\label{E:BLcusp}
\begin{array}{c}
\xymatrix{
\Gamma(E_\circ, \kO_E) \ar@{^{(}->}[rr]^-{l_p} & & \widehat{Q} & & \widehat{O}\ar@{_{(}->}[ll] \\
 \CC[z^{-2}, z^{-3}]\ar@{^{(}->}[rr] \ar[rr] \ar[u]^-{\cong} &  & \CC\llbrace z\rrbrace  \ar[u]^-{\cong}  & & \ar@{_{(}->}[ll] \CC\llbracket z\rrbracket
 \ar[u]_-{\cong}
}
\end{array}
\end{equation}
To simplify the notation, we shall view the vertical isomorphisms as the identity maps.
Let $V := z^{-t-1} \lieg\llbracket z^{-1}\rrbracket$. According to (\ref{E:BoundsSpaceW}) we have: $V \subseteq W$. Moreover, it is not difficult to see that the quotient vector space $W/V$ is finite dimensional. Therefore, the induced map $\widehat{Q} \otimes_S V \lar \widehat{Q} \otimes_S W$ is an isomorphism.
Let $F := \lieg\llbracket z \rrbracket$. Let $\phi$ be the  $\widehat{Q}$--linear isomorphism of Lie algebras making the following diagram
\begin{equation*}
\begin{array}{c}
\xymatrix{
\widehat{Q} \otimes_S W
 \ar[d]_-{\phi}& & \ar[ll]_-{\cong} \widehat{Q} \otimes_S V \ar[rr]^-{\mathsf{mult}} & & \lieg\llbrace z\rrbrace \ar[d]^{=} \\
\widehat{Q} \otimes_{\widehat{O}} F \ar[rrrr]^-{\mathsf{mult}} & & & & \lieg\llbrace z\rrbrace
}
\end{array}
\end{equation*}
commutative.   According to a construction of Beauville and Laszlo \cite{BL}, the datum  $(W, F, \phi)$ defines a sheaf of Lie algebras $\kA$ on the projective curve $E$, together with isomorphisms of Lie algebras
$
\Gamma\bigl(E_\circ, \kA\bigr) \stackrel{\alpha}\lar W$ and $\widehat{A}_p \stackrel{\beta}\lar W
$
such that the following diagram
\begin{equation}\label{E:morphismBLtriples}
\begin{array}{c}
\xymatrix{
\widehat{Q} \otimes_{S}  \Gamma\bigl(E_\circ, \kA\bigr)
 \ar[d]_{\mathsf{id} \otimes \alpha} \ar[rr]^-{\mathsf{can}}  & & \widehat{Q}\otimes_{\widehat{O}} \widehat{A}_p  \ar[d]^-{\mathsf{id} \otimes \beta} \\
\widehat{Q}  \otimes_S W \ar[rr]^-{\phi} & &  \widehat{Q}\otimes_{\widehat{O}}  F
}
\end{array}
\end{equation}
is commutative. Here, the Lie algebra $A_p$ is the germ of the sheaf $\kA$ at the point $p$ and $\widehat{A}_p$ is its completion.
An interested reader might look for a more detailed exposition in \cite[Section 1.7]{BurbanZheglov}, where the Beauville--Laszlo construction occurred in another setting.
The following sequence of vector spaces (which is a version  of the Mayer--Vietoris sequence)
\begin{equation}\label{E:MVsequence}
0 \lar H^0(E, \kA) \lar \Gamma(E_\circ, \kA) \oplus \widehat{A}_p \lar Q\bigl(\widehat{A}_p\bigr) \lar H^1(E, \kA) \lar 0
\end{equation}
is exact; see e.g.~\cite[Proposition 3]{Parshin}.
Since $W + F = \lieg\llbrace z\rrbrace$ and $W \cap  F = 0$, we may conclude that
$H^0(E, \kA) = 0 = H^1(E, \kA)$.

\begin{lemma}\label{L:locfreeRatSol}
As usual, let $\breve{E}$ be the regular part of $E$. Then we have:
$$
\Gamma(\breve{E}, \kA) \cong \lieg \otimes_{\CC}  \Gamma(\breve{E}, \kO_E) \cong \lieg[z].
$$
\begin{proof}
It follows from (\ref{E:BoundsSpaceW}) that $\Gamma\bigl(E_0\setminus\{s\}, \kA\bigr) \cong \lieg[z, z^{-1}]$. Again, we have  the Mayer--Vietoris exact sequence
$
0 \lar \Gamma\bigl(\breve{E}, \kA\bigr) \lar \Gamma\bigl(\breve{E}\setminus\{p\}, \kA\bigr)\oplus \widehat{A}_p \lar Q\bigl(\widehat{A}_p\bigr) \lar 0.
$
Since $\lieg[z, z^{-1}] \cap \lieg\llbracket z\rrbracket = \lieg[z]$, we get the result.
\end{proof}
\end{lemma}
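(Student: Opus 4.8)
The plan is to compute $\Gamma(\breve{E}, \kA)$ as an intersection of two explicit lattices inside the loop algebra $\lieg\llbrace z\rrbrace$, using the Beauville--Laszlo description of $\kA$ together with a Mayer--Vietoris argument. First I would fix the geometry: since $E$ is the projective cuspidal cubic with cusp $s$ and smooth ``infinite'' point $p$, the regular locus is $\breve{E} = E \setminus \{s\}$, and in the coordinate $z$ of the diagram (\ref{E:BLcusp}) the point $p$ sits at $z = 0$ while the cusp $s$ sits at $z = \infty$; thus $\breve{E} \cong \Spec \CC[z]$ and $\Gamma(\breve{E}, \kO_E) \cong \CC[z]$, which already identifies the right--hand side $\lieg \otimes_\CC \Gamma(\breve{E}, \kO_E) \cong \lieg[z]$. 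It therefore remains to produce an isomorphism $\Gamma(\breve{E}, \kA) \cong \lieg[z]$.

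The key local computation is the value of $\kA$ on the punctured smooth affine curve $\breve{E} \setminus \{p\} = E_\circ \setminus \{s\}$. Here $\Gamma(E_\circ \setminus \{s\}, \kO_E)$ is the localization of $S = \CC[z^{-2}, z^{-3}]$ in which $z^{-2}$ is inverted, namely $\CC[z, z^{-1}]$, and under $\Gamma(E_\circ, \kA) \cong W$ the corresponding sections become $\CC[z, z^{-1}] \cdot W \subseteq \lieg[z, z^{-1}]$. I would then invoke the sandwich bound (\ref{E:BoundsSpaceW}), namely $z^{-t-1}\lieg[z^{-1}] \subseteq W \subseteq z^{t-1}\lieg[z^{-1}]$: multiplying the lower bound by arbitrary powers of $z$ already exhausts all of $\lieg[z, z^{-1}]$, while the upper bound keeps $\CC[z, z^{-1}] \cdot W$ inside $\lieg[z, z^{-1}]$, so that $\Gamma(E_\circ \setminus \{s\}, \kA) \cong \lieg[z, z^{-1}]$. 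In other words, away from the cusp the sheaf $\kA$ is the ``trivial'' loop sheaf, which is exactly what the bounds are designed to express.

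Finally I would feed this into the Mayer--Vietoris (Parshin \cite{Parshin}) exact sequence attached to the cover of $\breve{E}$ by $\breve{E} \setminus \{p\}$ and the formal disc at $p$, the analogue of (\ref{E:MVsequence}):
\[
0 \lar \Gamma(\breve{E}, \kA) \lar \Gamma(\breve{E} \setminus \{p\}, \kA) \oplus \widehat{A}_p \lar Q(\widehat{A}_p) \lar 0.
\]
Using $\widehat{A}_p \cong \lieg\llbracket z\rrbracket$ and $Q(\widehat{A}_p) \cong \lieg\llbrace z\rrbrace$ from the Beauville--Laszlo datum, and the previous paragraph for the left summand, the second arrow becomes $(a, b) \mapsto a - b$ with both $\lieg[z, z^{-1}]$ and $\lieg\llbracket z\rrbracket$ included into $\lieg\llbrace z\rrbrace$ in the evident way; its kernel, hence $\Gamma(\breve{E}, \kA)$, is $\lieg[z, z^{-1}] \cap \lieg\llbracket z\rrbracket = \lieg[z]$, completing the proof. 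The point that needs care is not the Mayer--Vietoris formalism but the preceding identification $\Gamma(E_\circ \setminus \{s\}, \kA) \cong \lieg[z, z^{-1}]$ together with the verification that all three identifications refer to one and the same coordinate $z$ --- guaranteed by the compatibility diagram (\ref{E:morphismBLtriples}) of the Beauville--Laszlo construction --- so that the connecting map really is the literal inclusion of Laurent polynomials into Laurent series and the intersection can be read off directly.
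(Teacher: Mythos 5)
Your proof is correct and takes essentially the same route as the paper's: the sandwich bound (\ref{E:BoundsSpaceW}) gives $\Gamma\bigl(\breve{E}\setminus\{p\}, \kA\bigr) \cong \lieg[z, z^{-1}]$ after inverting $z^{-2}$, and the Mayer--Vietoris sequence at $p$ then identifies $\Gamma(\breve{E}, \kA)$ with $\lieg[z, z^{-1}] \cap \lieg\llbracket z\rrbracket = \lieg[z]$. The only difference is that you spell out the localization computation and the coordinate compatibility via (\ref{E:morphismBLtriples}), details the paper leaves implicit.
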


\subsection{Comparison theorem} Let us first summarize the results and constructions made in the previous two subsections.
We have started with a non--degenerate skew--symmetric rational solution $r$ of CYBE (\ref{E:genrat}). Then we attached to it a Lagrangian Lie subalgebra
$W \subset \lieg\llbrace z\rrbrace$  with respect to the pairing (\ref{E:RatPairing1}). Next, we proved that the ring $S = \CC[z^{-2}, z^{-3}]$ stabilizes
$W$ and that $W$ is a finitely generated torsion free $S$--module. Hence, we get a sheaf of Lie algebras on the affine cuspidal curve
$E_\circ = \Spec(S)$. The curve $E_\circ$ can be completed by a single smooth point $p$ to a projective curve $E$, which is isomorphic to the  Weierstra\ss{} cubic. As usual, we denote by $s$ the unique singular point of $E$ and by $\breve{E}$ the regular part of $E$.
  Using a construction of Beauville and Laszlo \cite{BL}, we extended the $S$--module $W$ to a torsion free coherent sheaf of Lie algebras on $E$ such that
\begin{itemize}
\item there exists an $S$--linear isomorphism of Lie algebras $\Gamma\bigl(E_\circ, \kA\bigr) \stackrel{\alpha}\lar W$ and a $\CC\llbrace z\rrbrace$--linear isomorphism of Lie algebras $\widehat{A}_p \stackrel{\beta}\lar \lieg\llbrace z\rrbrace$, satisfying the  condition
    (\ref{E:morphismBLtriples});
\item we have: $H^0(E, \kA) = 0 = H^1(E, \kA)$.
\end{itemize}
According to Lemma \ref{L:locfreeRatSol}, we have an isomorphism $
\Gamma(\breve{E}, \kA) \stackrel{\vartheta}\lar  \lieg[z]
$
induced by the isomorphisms $\alpha$ and $\beta$. In particular, the sheaf $\kA$ is weakly $\lieg$--locally free over $\breve{E}$.
In these notations, $z$ is a local parameter of the point $p$ and $\omega = dz$ spans the vector space
$\Gamma\bigl(E, \Omega_E\bigr)$. Since the Lie algebra $\widehat{W} \subset \lieg\llbrace z^{-1}\rrbrace$ is Lagrangian with respect to the pairing
(\ref{E:LagrDecRational2}), the Lie algebra $A_s$ is a Lagrangian subalgebra of $A_K$. Summing up, $(E, \kA)$ is a pair satisfying all conditions, which are necessary to apply Theorem \ref{T:CYBEtorsfree}. Now we are prepared to prove the main result of this section.

\begin{theorem}\label{T:geometrizationRational} Let $r$ be a non--degenerate skew--symmetric rational solution of CYBE and $\kA$  the sheaf of Lie algebras on the cuspidal cubic
$E$, described above. Let $\rho \in \Gamma\bigl(\breve{E}\times \breve{E}\setminus \Delta, \kA \boxtimes \kA\bigr)$ be the geometric $r$--matrix, attached to the datum $\bigl((E, \kA), (\breve{E}, \omega)\bigr)$ and $\widetilde{r} := \rho^{\vartheta}: \CC^2 \lar \lieg \otimes \lieg$ be the corresponding solution of CYBE, where $\Gamma(\breve{E}, \kA) \stackrel{\vartheta}\lar  \lieg[z]$ is the trivialization described above. Then for any point
$(x_1, x_2) \in \breve{E} \times \breve{E} \setminus \Delta \cong \CC^2\setminus \Delta$ we have: $\widetilde{r}(x_1, x_2) = r(x_2, x_1) = - r(x_1, x_2)$, i.e.~we essentially get the same  solution of CYBE we started with.
\end{theorem}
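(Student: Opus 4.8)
The plan is to unwind the construction of the geometric $r$--matrix by means of Lemma \ref{L:rmatrandmaps} and to recognise the relevant global sections of $\kA(x_2)$ as the generating series of the basis vectors $f_{k,l}$ of $W$. Fix an orthonormal basis $e_1, \dots, e_d$ of $\lieg$ and transport all fibres to $\lieg$ via the trivialisation $\vartheta$ of Lemma \ref{L:locfreeRatSol}; recall that $\breve E \cong \mathbb{A}^1 = \Spec\CC[z]$, that $p$ and the cusp $s$ correspond to $z = 0$ and $z = \infty$, and that $\omega = dz$. By Lemma \ref{L:rmatrandmaps}, $\rho(x_1, x_2)$ is the image of $\rho^{\sharp}(x_1, x_2) = \ev_{x_1}\circ\bigl(\res_{x_2}^\omega\bigr)^{-1}$ under the Killing--form isomorphism, so, writing $a_\alpha\in\Gamma\bigl(E, \kA(x_2)\bigr)$ for the unique global section with $\res_{x_2}^\omega(a_\alpha) = \alpha$ (which exists by (\ref{E:sheafresidA}) and $H^0 = H^1 = 0$),
\[
\widetilde{r}(x_1, x_2) = \sum_{l = 1}^d \ev_{x_1}(a_{e_l})\otimes e_l \in \lieg\otimes\lieg .
\]
Over the affine curve $\breve E$ one has $\Gamma\bigl(\breve E, \kA(x_2)\bigr)\cong\lieg[z] + \tfrac{1}{z - x_2}\lieg$, and with $\omega = dz$ the residue of $\tfrac{c}{z - x_2} + h(z)$ at $z = x_2$ equals $c$; hence $a_\alpha$ is the unique $\lieg$--valued rational function of $z$ with a simple pole of residue $\alpha$ at $z = x_2$, regular at $p$, and regular at the cusp $s$ in the sense of $\kA$.

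The heart of the argument is to produce $a_{e_l}$ explicitly through the generating series $\Psi_l(z, w) := \sum_{k\ge 0} f_{k,l}(z)\, w^k$. Substituting $f_{k,l} = e_l z^{-k-1} + p_{k,l}$ and summing the geometric part yields
\[
\Psi_l(z, w) = \frac{e_l}{z - w} + \sum_{k = 0}^t p_{k,l}(z)\, w^k ,
\]
a $\lieg$--valued rational function of $z$ with a single simple pole of residue $e_l$ at $z = w$, which is manifestly regular at $z = 0 = p$. It remains to verify regularity at the cusp, namely that the Laurent expansion of $\Psi_l(\cdot, w)$ at $z = \infty$ lies in $\widehat W$. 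Expanding $\tfrac{e_l}{z - w}$ at infinity reproduces $\sum_{k\ge 0} e_l w^k z^{-k-1}$, so this expansion equals $\sum_{k\ge 0} f_{k,l}(z)\, w^k$; splitting at $k = t$, the part with $k\le t$ is a finite combination of elements of $W\subseteq\widehat W$, whereas the tail $\sum_{k > t} e_l w^k z^{-k-1}$ lies in $z^{-t-1}\lieg\llbracket z^{-1}\rrbracket\subseteq\widehat W$ by (\ref{E:BoundsSpaceW}) and the explicit description of $\widehat W$. Therefore $\Psi_l(\cdot, w)\in\Gamma\bigl(E, \kA(w)\bigr)$, and uniqueness of the residue lift gives $a_{e_l} = \Psi_l(\cdot, x_2)$.

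It remains to match this with the power series of $r$. The expansion (\ref{E:expRatSol}) can be rewritten as $r(w, z) = \sum_{l = 1}^d \Psi_l(z, w)\otimes e_l$; setting $z = x_1$ and $w = x_2$ and invoking the previous step,
\[
\widetilde{r}(x_1, x_2) = \sum_{l = 1}^d \Psi_l(x_1, x_2)\otimes e_l = r(x_2, x_1),
\]
and the skew--symmetry of $r$ (equivalently, Proposition \ref{P:skewsymm}) yields the remaining identity $r(x_2, x_1) = -r(x_1, x_2)$. The single genuine obstacle is the verification that $\Psi_l(\cdot, w)$ is globally regular at the cusp $s$: this is exactly where the Beauville--Laszlo description of $\kA$ and the two--sided bounds (\ref{E:BoundsSpaceW}) on $W$, hence on $\widehat W$, are indispensable. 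Everything else reduces to generating--function bookkeeping.
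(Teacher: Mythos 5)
Your proposal is correct, but it follows a genuinely different route from the paper's proof. The paper never constructs the residue--lifts explicitly: it restricts the distinguished section $\varrho \in \Gamma\bigl(E \times \breve{E}, \kA \boxtimes \kA(\Delta)\bigr)$ to the infinitesimal thickenings $E_\circ \times P_n$, $P_n = \Spec\bigl(\CC[x]/x^{n+1}\bigr)$, of $E_\circ \times \{p\}$, observes that the resulting jet comes from an element of $W \otimes \bigl(\lieg[x]/x^{n+1}\bigr)$, and pins down its coefficients by matching Laurent parts in the basis $(f_{k,l})$ --- i.e.\ it exploits the duality between $(f_{k,l})$ and $(e_l z^k)$, lets $n \to \infty$, and gets pointwise equality from equality of rational functions with the same expansion. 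You instead invert the residue isomorphism in closed form: the generating series $\Psi_l(z,w) = \sum_{k \ge 0} f_{k,l}(z) w^k$ telescopes to a rational section precisely because $p_{k,l} = 0$ for $k > t$, and Lemma \ref{L:rmatrandmaps} then evaluates $\rho$ at every point $(x_1, x_2)$ directly, with no truncation or limit. What your route buys is explicitness (you exhibit the sections $a_{e_l}$, and in passing your argument yields Theorem \ref{T:TwoWaysCYBE} for free, since it identifies the $f_{k,l}$ as the residue--lifts of the dual topological basis); what it costs is the one verification the paper's route sidesteps, namely cusp--regularity of $\Psi_l(\cdot, w)$, which you handle correctly via the explicit description of $\widehat{W}$ and the bounds (\ref{E:BoundsSpaceW}). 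To make that step fully tight you should also invoke $W = \widehat{W} \cap \lieg\bigl[z, z^{-1}\bigr]$ (suitably localized at $w$, since $\Psi_l(\cdot,w)$ is not a Laurent polynomial) to pass from membership of the completion in $\widehat{W}$ to membership of the germ in $A_s$ --- this is exactly the purpose for which the paper records that identity --- and note the boundary case $w = 0$, where the pole of $\Psi_l$ sits at $p$ itself and is absorbed by the twist $\kA(x_2)$. Finally, your closing equality $r(x_2, x_1) = -r(x_1, x_2)$ inherits the paper's own mild abuse of notation: skew--symmetry literally gives $r(x_2, x_1) = -\sigma\bigl(r(x_1, x_2)\bigr)$ with $\sigma$ the flip, but since the statement you prove matches the theorem as stated, this is not a gap.
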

\begin{proof}
For any $n \in \NN_0$, consider the scheme $P_n := \Spec\bigl(\CC[x]/x^{n+1}\bigr)$ and the morphism $P_n \stackrel{\jmath_n}\lar \breve{E}$ mapping the closed point of $P_n$ to the ``infinite'' point $p$ of $\breve{E}$. Note that $\bigl(E_\circ \times \{p\}\bigr) \cap \Delta = \emptyset$, where the intersection is taken inside of $E_\circ \times \breve{E}$. Therefore, we have a morphism of schemes
$
E_\circ \times P_n \xrightarrow{\mathsf{id}\times \jmath_n} \bigl(E_\circ \times \breve{E}\bigr)\setminus \Delta.
$
Consider now the following commutative diagram
\begin{equation}
\begin{array}{c}
\xymatrix{
\Gamma\bigl(E \times \breve{E}, \kA \boxtimes \kA(\Delta)\bigr) \ar@{_{(}->}[d] \ar@{^{(}->}[drr] & & \\
\Gamma\bigl(E \times \breve{E} \setminus \Delta, \kA \boxtimes \kA\bigr) \ar@{^{(}->}[rr] \ar[d]_-{(\mathsf{id}\times \jmath_n)^\ast} & & \Gamma\bigl(\breve{E} \times \breve{E} \setminus \Delta, \kA \boxtimes \kA\bigr) \ar[d]^-{(\mathsf{id}\times \jmath_n)^\ast} \\
\Gamma\bigl(E_\circ, \kA\bigr)\otimes \Gamma\bigl(P_n, \jmath_n^*\kA\bigr) \ar@{^{(}->}[rr] \ar[d]_-{\vartheta \otimes \bar\beta} & & \Gamma\bigl(\breve{E}_\circ, \kA\bigr)\otimes \Gamma\bigl(P_n, \jmath_n^*\kA\bigr) \ar[d]^-{\alpha_{\mid} \otimes \bar\beta} \\
W \otimes \bigl(\lieg[x]/x^{n+1}\bigr) \ar[rr]^-{\tau_n} & & \bigl(\lieg \otimes \lieg\bigr)\bigl[x, y,y^{-1}\bigr]/(x^{n+1}).
}
\end{array}
\end{equation}
Here, $\breve{E}_\circ = E \setminus \{s, p\}$ and  $\bar\beta$ is   the morphism induced by the trivialization $\widehat{A}_p \stackrel{\beta}\lar \lieg \llbracket z\rrbracket$, whereas $\tau_n$ is the \emph{isomorphism} of vector spaces induced by the  isomorphism
$W\big|_{\breve{E}_\circ} \cong (\lieg \otimes \lieg)[z, z^{-1}]$, which is obtained by localizing (\ref{E:BoundsSpaceW}) with respect to
the multiplicative subset $\bigl\{z^{-k} \big| k \in \NN_0 \setminus \{1\}\bigr\} \subset S$.  Recall that we  have a distinguished element $\varrho \in \Gamma\bigl(E \times \breve{E}, \kA \boxtimes \kA(\Delta)\bigr)$, which restricts to the geometric $r$--matrix $\rho \in \Gamma\bigl(\breve{E} \times \breve{E} \setminus \Delta, \kA \boxtimes \kA\bigr)$. The corresponding solution of CYBE has the form
$
\widetilde{r}(y, x) = \dfrac{\gamma}{y-x} + \widetilde{v}(x, y),
$
where $\widetilde{v} \in (\lieg \otimes \lieg)[x, y]$. Next, we have the following equality in the ring $\CC\bigl[x, y, y^{-1}\bigr]/(x^{n+1})$:
$$
\frac{1}{y-x} = \sum\limits_{k= 0}^n y^{-k-1} x^k;
$$
in particular, $y-x \in \CC\bigl[x, y, y^{-1}\bigr]/(x^{n+1})$ is a unit.
Similarly to (\ref{E:expRatSol}), we get an expansion
\begin{equation}\label{E:expRatSol2}
\widetilde{r}_{n}(y, x) =  \sum\limits_{k=0}^n \sum\limits_{l = 1}^d \bigl(y^{-k-1} e_l + \widetilde{p}_{k, l}(y)\bigr) \otimes x^k e_l,
\end{equation}
where $\widetilde{p}_{k, l}$ are some elements of $\lieg[y]$ for  $k \in \NN_0$ and $1 \le l \le d$. In fact, (\ref{E:expRatSol2}) is precisely the image
of the geometric $r$--matrix $\rho$ in the vector space $\bigl(\lieg \otimes \lieg\bigr)\bigl[x, y,y^{-1}\bigr]/(x^{n+1})$. However, the element
$\widetilde{r}_{n}(y, x)$ belongs to the image of some element of the vector space $W \otimes \bigl(\lieg[x]/x^{n+1}\bigr)$. Any element
$\bar{\bar{r}}$ of this space can be written as
$$
\bar{\bar{r}} =  \sum\limits_{k, i=0}^n \sum\limits_{l,j = 1}^d \lambda_{k,l}^{i,j}\bigl(y^{-k-1} e_l + p_{k, l}(y)\bigr) \otimes x^i e_j
$$
for some uniquely determined constants $\lambda_{k,l}^{i,j} \in \CC$. Looking at the Laurent part of the identity
$\tau_n\bigl(\bar{\bar{r}}\bigr) = \widetilde{r}_{n}(y, x)$, we conclude that
$$
\lambda_{k,l}^{i,j} =
\left\{
\begin{array}{cccc}
1 & \mbox{\rm if}\; (i, j) & = &  (k, l)\\
0 & \mbox{\rm if}\; (i, j) &\ne & (k, l)
\end{array}
\right.
$$
and that $p_{k, l} = \widetilde{p}_{k, l}$ for any $0 \le k \le n$ and $1 \le l \le d$. Since $n\in \NN_0$ can be taken arbitrary large, we arrive to the claimed statement.
\end{proof}

\begin{corollary}
Theorem \ref{T:geometrizationRational} implies that any non--degenerate skew--symmetric rational solution of CYBE arises from a geometric $r$--matrix for an appropriate pair $(E, \kA)$, where $E$ is the cuspidal Weierstra\ss{} cubic.
\end{corollary}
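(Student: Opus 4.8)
The plan is to deduce the corollary directly from Theorem~\ref{T:geometrizationRational}, so that the entire argument amounts to assembling the constructions of the preceding subsections into a single existence statement. First I would fix an arbitrary non--degenerate skew--symmetric rational solution $r$ of CYBE; by definition such a solution is of the normalised form (\ref{E:genrat}), so the running hypotheses of this section apply to it verbatim. The constructions summarised at the opening of the present subsection then attach to $r$ a cuspidal Weierstra\ss{} cubic $E = \overline{V(v^2-u^3)}$ together with a torsion free coherent sheaf of Lie algebras $\kA$ on $E$: one passes from $r$ to the Lagrangian Lie subalgebra $W \subset \lieg\llbrace z\rrbrace$, observes that $S = \CC[z^{-2}, z^{-3}]$ stabilises $W$, and extends the $S$--module $W$ to $\kA$ by the Beauville--Laszlo construction.

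Next I would record that this pair $(E, \kA)$ meets every hypothesis of Theorem~\ref{T:CYBEtorsfree}: the vanishing $H^0(E, \kA) = 0 = H^1(E, \kA)$ is read off from the Mayer--Vietoris sequence (\ref{E:MVsequence}) using $\lieg\llbrace z\rrbrace = \lieg\llbracket z\rrbracket \dotplus W$, weak $\lieg$--local freeness over $\breve{E}$ is Lemma~\ref{L:locfreeRatSol}, and the coisotropy of $A_s$ is exactly the Lagrangian property of $\widehat{W}$ with respect to the pairing (\ref{E:RatPairing2}). Consequently the geometric $r$--matrix $\rho \in \Gamma\bigl(\breve{E}\times\breve{E}\setminus\Delta, \kA\boxtimes\kA\bigr)$ of the datum $\bigl((E, \kA), (\breve{E}, \omega)\bigr)$ is a non--degenerate skew--symmetric solution of CYBE, and Theorem~\ref{T:geometrizationRational} identifies it: transported through the trivialisation $\Gamma(\breve{E}, \kA) \stackrel{\vartheta}\lar \lieg[z]$ it becomes $\widetilde{r} = \rho^{\vartheta}$ with $\widetilde{r}(x_1, x_2) = -r(x_1, x_2)$.

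The only point requiring care --- and the sole, very mild, obstacle --- is the overall sign in $\widetilde{r} = -r$. This is absorbed by the freedom in the choice of generator of $\Gamma(E, \Omega_E) \cong \kk$: replacing $\omega$ by $-\omega$ rescales the geometric $r$--matrix by the constant $-1$, an equivalence that preserves both skew--symmetry and non--degeneracy, so with this choice the geometric $r$--matrix of $(E, \kA)$ is literally $r$ itself. Since $r$ was an arbitrary non--degenerate skew--symmetric rational solution, every such solution arises as the geometric $r$--matrix of a pair $(E, \kA)$ with $E$ the cuspidal Weierstra\ss{} cubic, which is the assertion of the corollary.
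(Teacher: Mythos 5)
Your proposal is correct and takes essentially the same route the paper intends: the corollary is the immediate assembly of the constructions preceding Theorem~\ref{T:geometrizationRational} (the Lagrangian subalgebra $W$, its $S$--module structure, the Beauville--Laszlo extension, and the verification of the hypotheses of Theorem~\ref{T:CYBEtorsfree} via the Mayer--Vietoris sequence, Lemma~\ref{L:locfreeRatSol}, and the Lagrangian property of $\widehat{W}$), combined with the identification $\widetilde{r} = -r$. Your explicit absorption of the sign by replacing $\omega$ with $-\omega$ is consistent with the rescaling remark of Subsection~\ref{SS:GeometrizationGCYBE} and merely sharpens the paper's phrase that one gets ``essentially the same'' solution.
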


\section{Manin triples and the geometric $r$--matrix}
\begin{proposition}\label{P:canManinTriples} Let $E$ be a Weierstra\ss{} cubic, $p \in E$ its ``infinite'' point, $\widehat{Q}_p$ the field of fractions of the local ring $\widehat{O}_p$,
$\omega$ a generator of  $\Gamma(E, \Omega_E)$ and $\kA$ a sheaf of Lie algebras on $E$ satisfying the conditions  of  Theorem \ref{T:CYBEtorsfree}. Then we have a Lagrangian decomposition
\begin{equation}\label{E:ManinTripleCanonical}
Q(\widehat{A}_p) = \widehat{A}_p \dotplus \Gamma\bigl(E_\circ, \kA\bigr),
\end{equation}
where the $\CC$--bilinear pairing on $Q(\widehat{A}_p)$ is given by the rule
$Q(\widehat{A}_p) \times Q(\widehat{A}_p) \stackrel{\kappa_p}\lar \widehat{Q}_p \stackrel{\res_p^\omega}\lar \CC$.
\end{proposition}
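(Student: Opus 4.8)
The plan is to check directly the two defining properties of a Lagrangian decomposition for the pairing $\langle x, y\rangle := \res_p^\omega\bigl(\kappa_p(x,y)\bigr)$ on $L := Q(\widehat{A}_p)$: that $L_1 := \widehat{A}_p$ and $L_2 := \Gamma(E_\circ, \kA)$ (with $E_\circ = E\setminus\{p\}$) are complementary, and that each is isotropic. The direct sum decomposition I would extract from the Mayer--Vietoris type sequence (\ref{E:MVsequence}),
$$0 \lar H^0(E, \kA) \lar \Gamma(E_\circ, \kA) \oplus \widehat{A}_p \lar Q(\widehat{A}_p) \lar H^1(E, \kA) \lar 0,$$
in which the two terms map to $Q(\widehat{A}_p)$ by Laurent expansion at $p$ and by inclusion. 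Condition (1) of Theorem \ref{T:CYBEtorsfree} forces $H^0(E,\kA)=0=H^1(E,\kA)$, so the middle arrow is an isomorphism, which is exactly the statement $\widehat{A}_p + \Gamma(E_\circ,\kA) = Q(\widehat{A}_p)$ together with $\widehat{A}_p \cap \Gamma(E_\circ,\kA) = 0$.

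Next I would record the non-degeneracy of $\langle\,,\,\rangle$. Since $p$ is a smooth point, $\widehat{A}_p \cong \lieg\otimes_\CC\widehat{O}_p$, whence $L\cong \lieg\otimes_\CC\widehat{Q}_p$ and $\kappa_p$ is the $\widehat{Q}_p$-bilinear extension of the Killing form of $\lieg$, which is non-degenerate. For $x\neq 0$ the functional $y\mapsto \kappa_p(x,y)$ is a nonzero $\widehat{Q}_p$-linear map $L\to\widehat{Q}_p$, hence surjective; composing with the surjection $\res_p^\omega\colon\widehat{Q}_p\to\CC$ shows $\langle x,\cdot\rangle\neq 0$.

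The isotropy statements carry the geometric content. For $\widehat{A}_p$, if $f,g\in\widehat{A}_p$ then $\kappa_p(f,g)\in\widehat{O}_p$, and as $\omega$ is regular and nowhere vanishing at the smooth point $p$, the one-form $\kappa_p(f,g)\omega$ is regular, so $\langle f,g\rangle = \res_p(\kappa_p(f,g)\omega)=0$. For $\Gamma(E_\circ,\kA)$, given $f,g$ the function $\kappa(f,g)$ is regular on $E\setminus\{p\}$, so $\kappa(f,g)\omega$ is a meromorphic one-form on the complete curve $E$ with poles confined to $p$ and, if $E$ is singular, the point $s$; the Rosenlicht residue theorem then gives $\res_p^\omega(\kappa_p(f,g)) + \res_s^\omega(\kappa(f,g))=0$. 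I expect this step to be the main obstacle: the second residue must vanish, and it does precisely because the germs of $f,g$ at $s$ lie in $A_s$, which is coisotropic by hypothesis (3). In the smooth case there is no point $s$ and this term is simply absent, but in the singular case assumption (3) is indispensable, and isolating exactly how it enters is the crux of the argument.

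Finally I would conclude by a purely formal observation: in a non-degenerate bilinear space, a direct sum $L = L_1\oplus L_2$ of two isotropic subspaces is automatically a decomposition into Lagrangians. Indeed, for $v\in L_1^\perp$ write $v=a+b$ with $a\in L_1$, $b\in L_2$; isotropy of $L_1$ gives $\langle b,w\rangle=0$ for all $w\in L_1$, so $b\in L_1^\perp\cap L_2$, and since $b$ is then orthogonal to $L_1$ and, by isotropy of $L_2$, to $L_2$, non-degeneracy forces $b=0$ and $v\in L_1$; thus $L_1^\perp = L_1$, and symmetrically $L_2^\perp=L_2$. Applying this with $L_1=\widehat{A}_p$ and $L_2=\Gamma(E_\circ,\kA)$ yields the asserted Lagrangian decomposition.
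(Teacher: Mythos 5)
Your proof is correct and follows essentially the same route as the paper's: the direct sum decomposition is extracted from the Mayer--Vietoris sequence (\ref{E:MVsequence}) using $H^0(E,\kA)=0=H^1(E,\kA)$, isotropy of $\widehat{A}_p$ comes from $\kappa_p$ taking values in $\widehat{O}_p$ with $\omega$ regular at $p$, and isotropy of $\Gamma(E_\circ,\kA)$ comes from the residue theorem with the residue at $s$ killed by hypothesis (3) (vacuous in the smooth case), exactly as in the paper. The only difference is cosmetic: you spell out the non-degeneracy of the pairing and the formal lemma that a direct sum of two isotropic subspaces in a non-degenerate space is automatically Lagrangian, steps the paper leaves implicit in its assertion that it suffices to prove both subalgebras coisotropic.
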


\begin{proof} The fact that we have a direct sum decomposition (\ref{E:ManinTripleCanonical}), is a consequence of the Mayer--Vietoris exact sequence
(\ref{E:MVsequence}) and the assumption that $H^0(E, \kA) = 0 = H^1(E, \kA)$. Hence, we have to show that (\ref{E:ManinTripleCanonical}) is a Lagrangian decomposition. To do this, it is sufficient to prove that both Lie algebras $\widehat{A}_p$ and $\Gamma\bigl(E_\circ, \kA\bigr)$ are coisotropic subspaces
of $Q(\widehat{A}_p)$. Obviously, the  differential form $\omega$ is regular at the point $p$. Next, the Killing form
$\widehat{A}_p \times \widehat{A}_p \stackrel{\kappa_p}\lar  \widehat{Q}_p$ actually takes value in the ring $\widehat{O}_p$. Therefore,
the Lie algebra $\widehat{A}_p$ is a coisotropic subspace of $Q(\widehat{A}_p)$, as claimed.

To prove that $\Gamma\bigl(E_\circ, \kA\bigr)$ is a coisotropic Lie subalgebra  of $Q(\widehat{A}_p)$ as well, take any two elements $f, g \in \Gamma\bigl(E_\circ, \kA\bigr)$. Then we get a rational function $\kappa(f,g) \in K$, where here $\kappa$ is the Killing form on the rational envelope $A_K$. The residue
theorem implies that
$$
0 = \sum\limits_{q \in E} \res_q\bigl(\kappa(f, g)\omega\bigr) = \res_s\bigl(\kappa(f, g)\omega\bigr) +
\sum\limits_{q \in \breve{E}_\circ} \res_q\bigl(\kappa(f, g)\omega\bigr) +
\res_p\bigl(\kappa(f, g)\omega\bigr).
$$
We have: $\res_s\bigl(\kappa(f, g)\omega\bigr) = 0$, because $A_s \subset A_K$ is coisotropic (if $E$ is smooth then this  contribution is void). Next, $\res_q\bigl(\kappa(f, g)\omega\bigr) = 0$
for any $q \in \breve{E}_\circ$, since $\kappa(f, g) \in \Gamma\bigl(\breve{E}_\circ, \kO_E\bigr)$ and $\omega$ is regular at $q$. Hence,
$\res_p\bigl(\kappa(f, g)\omega\bigr) = 0$ for any $f, g \in \Gamma\bigl(E_\circ, \kA\bigr)$ as claimed.
\end{proof}

\begin{remark}\label{R:ManinTriples} Let $r$ be an arbitrary non--degenerate skew--symmetric solution of (\ref{E:CYBE}) of the form
$
r(x, y) = \dfrac{\gamma}{y-x} + u(x, y),
$
where $(\CC^2, 0) \stackrel{u}\lar \lieg \otimes \lieg$ is holomorphic (note that any solution of CYBE with one spectral parameter (\ref{E:CYBE1}) automatically has this property by a result of Belavin and Drinfeld \cite{BelavinDrinfeld}). In the domain $|x| <  |y|$, we can write an expansion
\begin{equation}\label{E:expNonDegSol}
r(x, y)  = \sum\limits_{k=0}^\infty \sum\limits_{l = 1}^d \bigl(y^{-k-1} e_l + u_{k, l}(y)\bigr) \otimes x^k e_l,
\end{equation}
where $(\CC, 0) \stackrel{u_{k,l}}\lar \lieg$ are germs of holomorphic functions for $k \in \NN_0$ and $1\le l \le d$. Consider the vector space
\begin{equation}
W:= \bigl\langle z^{-k-1} e_l + u_{k, l}(z) \; \big| \; k \in \NN_0, \; 1 \le l \le d\bigr\rangle_{\CC}  \subseteq \lieg\llbrace z\rrbrace.
\end{equation}
Then we get a Lagrangian decomposition (Manin triple)
\begin{equation}\label{E:basicManinTriple}
\lieg\llbrace z\rrbrace = \lieg\llbracket z\rrbracket \dotplus W
\end{equation}
with respect to the pairing (\ref{E:RatPairing1}); see for example \cite[Proposition 6.2]{EtingofSchiffmann}. Conversely, starting with a Manin triple
(\ref{E:basicManinTriple}), we can take the ``topological basis'' $\bigl(e_l z^k \big| k \in \NN_0, \; 1 \le l \le d\bigr)$ of $\lieg\llbracket z\rrbracket$. Then we get a uniquely determined dual basis $\bigl(f_{k,l} \big| k \in \NN_0, \; 1 \le l \le d\bigr)$ of the Lie algebra $W$. If (\ref{E:basicManinTriple}) is a decomposition arising from an $r$--matrix (\ref{E:expNonDegSol})  then $f_{k,l}(z) = z^{-k-1} e_l + u_{k, l}(z)$.

Manin triples of the form (\ref{E:basicManinTriple}), leading to the elliptic solutions of CYBE, were constructed  by Reyman and Semenov--Tian--Shansky \cite{ReymanST}. In fact, let  $(E, \kA)$ be  as in  Remark \ref{R:ellipticsolutions}. Then the resulting Manin triple (\ref{E:ManinTripleCanonical}) can be identified with the one from \cite{ReymanST}. It follows for example from a straightforward computation \cite[Theorem 5.5]{BH}, that the geometric $r$--matrix attached to the pair $(E, \kA)$ leads to the power series expression  as the one obtained by the approach via Manin triples. As we shall see later in Theorem \ref{T:TwoWaysCYBE}, this result can be deduced without doing any explicit computations. The algebro--geometric origin of   Manin triples (\ref{E:basicManinTriple}) was pointed out  by Drinfeld \cite[Section 3]{Drinfeld}. \qed
\end{remark}

\begin{remark}
Let us now discuss the case of a \emph{nodal} Weierstra\ss{} curve $E$, since some extra steps  to identify the Manin triples (\ref{E:ManinTripleCanonical}) and  (\ref{E:basicManinTriple}) have to be done. Consider the $\CC$--algebra
$$
S:= \bigl\{p \in \CC[w] \; \big| \; p(-1) = p(1)\bigr\} \subset \CC[w].
$$
Then $E_\circ := \Spec(S)$ is isomorphic to an affine nodal Weierstra\ss{} curve. Of course, it admits a one--point compactification by a smooth point $p$.
Algebraically, this compactification can be formalized by the commutative diagram
\begin{equation}\label{E:BLnode}
\begin{array}{c}
\xymatrix{
\Gamma(E_\circ, \kO_E) \ar@{^{(}->}[rr]^-{l_p} & & \widehat{Q}_p & & \widehat{O}_p\ar@{_{(}->}[ll] \\
 S \ar@{^{(}->}[rr] \ar[rr] \ar[u]^-{\cong} &  & \CC\llbrace w^{-1}\rrbrace  \ar[u]^-{\cong}  & & \ar@{_{(}->}[ll] \CC\llbracket w^{-1}\rrbracket.
 \ar[u]_-{\cong}
}
\end{array}
\end{equation}
Consider the differential one--form $\omega:= \dfrac{1}{1-w^2}$ on $E$ and put  $y := \dfrac{1}{w}$. Then $y$ is a local parameter at the point $p$ and
$\omega = \dfrac{1}{1-y^2}$.
It is not difficult to check that $\omega$ generates $\Gamma\bigl(E, \Omega_E\bigr)$.  Consider the algebra isomorphism
\begin{equation*}
\CC\llbracket y\rrbracket \stackrel{\psi}\lar \CC\llbracket z\rrbracket, \quad y \mapsto \mathrm{tanh}(z) = z - \frac{1}{3}z^3 + \frac{2}{15} z^5 + \dots
\end{equation*}
It is easy to see that $\widetilde{\omega}:= \psi(\omega) = dz$. Moreover, the following diagram
$$
\xymatrix{
\CC\llbrace y\rrbrace   \ar[d]_-{\psi} \ar[rr]^-{\res^{\omega}_0} & & \CC \ar[d]^-{=}\\
\CC\llbrace z\rrbrace  \ar[rr]^-{\res^{\widetilde{\omega}}_0} & & \CC
}
$$
is commutative; see for example \cite{Tate}. Note that under these identifications, we get an inclusion
$S \subset \CC\bigl[\mathrm{coth}(z)\bigr] =  \CC\left[\dfrac{\mathrm{ch}(z)}{\mathrm{sh}(z)}\right]$.

\smallskip
\noindent
Let $\kA$ be a sheaf of Lie algebras on a nodal Weierstra\ss{} cubic $E$ and  $\rho \in \Gamma\bigl(\breve{E}\times \breve{E} \setminus \Delta, \kA \boxtimes \kA\bigr)$ be the corresponding geometric $r$--matrix constructed in Theorem \ref{T:CYBEtorsfree}.
Let us choose local coordinates
in a neighbourhood of the point $(p, p) \in \breve{E}\times \breve{E}$, following  (\ref{E:BLnode}). According to (\ref{E:structureRmat}), we can write:
$
\rho(x_1, x_2) = \dfrac{1-x_2^2}{x_1-x_2} \widetilde{\gamma} + \upsilon(x_1, x_2),
$
where $\upsilon \in \Gamma\bigl(\breve{E}\times \breve{E}, \kA \boxtimes \kA\bigr)$. Let us put   $x_j = \mathrm{tanh}(y_j)$ for $j = 1,2$, then we get:
$$
\rho(y_1, y_2) = \frac{\widetilde{\gamma}}{\mathrm{ch}^2(y_2) \cdot\bigl(\mathrm{tanh}(y_1) - \mathrm{tanh}(y_2)\bigr)}  +
\upsilon\bigl(\mathrm{tanh}(y_1), \mathrm{tanh}(y_2)\bigr).
$$
Taking a formal trivialization $\widehat{A}_p \stackrel{\xi}\lar \lieg \llbracket z\rrbracket$, we obtain a skew--symmetric solution of CYBE as in Remark \ref{R:ManinTriples}. \qed
\end{remark}

Summing up, let us start with an arbitrary pair $(E, \kA)$ as in Theorem \ref{T:CYBEtorsfree}. We can always find  an algebra isomorphism $\widehat{O}_p \stackrel{\zeta}\lar \CC\llbracket z\rrbracket$ such that a  generator $\omega \in \Gamma\bigl(E, \Omega_E)$ takes the form:
$\omega = dz$. Choose an isomorphism of Lie algebras $\widehat{A}_p \stackrel{\xi}\lar \lieg\llbracket z\rrbracket$. Then the Lagrangian decomposition
(\ref{E:ManinTripleCanonical}) gets the form (\ref{E:basicManinTriple}), where $W$ is the image of $\Gamma\bigl(E_\circ, \kA\bigr)$ under the map
$Q\bigl(\widehat{A}_p\bigr)\stackrel{\widetilde\xi}\lar \lieg\llbrace z\rrbrace$ induced by $\xi$.

\begin{theorem}\label{T:TwoWaysCYBE} Let $(E, \kA)$ be a pair as in Theorem \ref{T:CYBEtorsfree} and $\bigl(f_{k,l} \big| k \in \NN_0, \; 1 \le l \le d\bigr)$ be the basis of the  Lie algebra $W$, which
 is dual to the ``topological basis'' $\bigl(e_l z^k \big| k \in \NN_0, \; 1 \le l \le d\bigr)$ of $\lieg\llbracket z\rrbracket$.   Then   $f_{k,l} = z^{-k-1} e_l + p_{k, l}$ for some $p_{k,l} \in \lieg\llbracket z\rrbracket$ and the formal power series
 \begin{equation}\label{E:rmatManinTriples}
 r(y, x) = \frac{\gamma}{y-x} + \sum\limits_{k=0}^\infty \sum\limits_{l = 1}^d p_{k, l}(y) \otimes x^k e_l,
 \end{equation}
 coincides with the geometric $r$--matrix $\rho$ trivialized at the formal neighbourhood of $(p, p) \in E \times \breve{E}$ by the map  induced by $\xi$.
\end{theorem}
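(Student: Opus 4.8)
The plan is to match the two tensors coefficient by coefficient, by realizing the coefficients of \emph{both} as the same distinguished elements of $W$. Throughout I use the Lagrangian decomposition $\lieg\llbrace z\rrbrace = \lieg\llbracket z\rrbracket \dotplus W$ from Proposition \ref{P:canManinTriples}, transported by $\xi$, in which $W$ is the image of $\Gamma(E_\circ, \kA)$ and the pairing is $\bar\kappa(az^l, bz^k) = \res_0\bigl(\kappa(a,b)z^{l+k}dz\bigr)$ coming from $\omega = dz$. Since $\kappa(e_i, e_j) = \delta_{ij}$ and $\res_0(z^{m+r}dz) = \delta_{m+r,-1}$, pairing the dual basis vector $f_{k,l}$ against $e_j z^m$ shows that the principal part of $f_{k,l}$ at $p$ is exactly $z^{-k-1}e_l$; as $f_{k,l} \in W$ and $W \cap \lieg\llbracket z\rrbracket = 0$, this already gives $f_{k,l} = z^{-k-1}e_l + p_{k,l}$ with $p_{k,l} \in \lieg\llbracket z\rrbracket$, and identifies $f_{k,l}$ as the \emph{unique} element of $W$ with prescribed principal part $z^{-k-1}e_l$.

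Next I would expand the geometric $r$-matrix in the spirit of the proof of Theorem \ref{T:geometrizationRational}. Let $\varrho \in \Gamma\bigl(E \times \breve{E}, \kA \boxtimes \kA(\Delta)\bigr)$ be the canonical section from (\ref{E:CanRmatrix}), put $E_\circ = E \setminus \{p\}$ and let $P_n = \Spec\bigl(\CC[x]/x^{n+1}\bigr) \xrightarrow{\jmath_n} \breve{E}$ be the $n$-th infinitesimal neighbourhood of $p$. The key geometric observation is that $\bigl(E_\circ \times P_n\bigr) \cap \Delta = \emptyset$, because $E_\circ$ omits $p$ whereas $P_n$ is supported at $p$; hence $\varrho$ restricts to a \emph{regular} section on $E_\circ \times P_n$, i.e. to an element of $\Gamma(E_\circ, \kA) \otimes \Gamma\bigl(P_n, \jmath_n^*\kA\bigr) \cong W \otimes \bigl(\lieg[x]/x^{n+1}\bigr)$, the identifications being induced by $\Gamma(E_\circ, \kA) \cong W$ and by the trivialization $\xi$. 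Letting $n \to \infty$ produces an expansion $\rho = \sum_{k,l}\sigma_{k,l} \otimes x^k e_l$ whose first-factor coefficients $\sigma_{k,l}$ are honest elements of $W = \Gamma(E_\circ, \kA)$.

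It then remains to compute the principal parts of the $\sigma_{k,l}$ at $p$. By the structure formula (\ref{E:structureRmat}) with $\nu \equiv 1$, together with $\res_\Delta^\omega(\varrho) = \gamma$, in the formal neighbourhood of $(p,p)$ (region $|x| < |y|$) one has $\rho(y,x) = \frac{\gamma}{y-x} + \upsilon'(y,x)$ with $\upsilon'$ regular at $(p,p)$, where $\gamma = \sum_l e_l \otimes e_l$ is the constant Casimir in the trivialization $\xi$. Expanding $\frac{1}{y-x} = \sum_{k \ge 0} y^{-k-1}x^k$ shows that the Laurent expansion of $\sigma_{k,l}$ at $p$ has principal part precisely $y^{-k-1}e_l$. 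Comparing with the first step and using the uniqueness of the element of $W$ with given principal part forces $\sigma_{k,l} = f_{k,l}$. Substituting $f_{k,l} = z^{-k-1}e_l + p_{k,l}$ and resumming $\sum_{k,l} z^{-k-1}e_l \otimes x^k e_l = \frac{\gamma}{y-x}$ yields exactly (\ref{E:rmatManinTriples}), which is the assertion.

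I expect the second paragraph to be the main obstacle: one must argue carefully that restriction to $E_\circ \times P_n$ simultaneously kills the diagonal pole and exhibits the first-factor coefficients $\sigma_{k,l}$ as \emph{genuine} global sections in $W = \Gamma(E_\circ, \kA)$ rather than as mere formal Laurent tails at $p$ (note that each $\sigma_{k,l}$ has a pole at $p$, while their generating sum is regular there away from the diagonal, so the honest-section property is what makes the uniqueness argument applicable). Once this regularity-and-identification step is secured, the remaining comparison of principal parts, governed by the single defining property $\res_\Delta^\omega(\varrho) = \gamma$, is purely formal.
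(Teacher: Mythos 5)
Your proposal is correct and follows essentially the same route as the paper: restricting the canonical section $\varrho$ to $E_\circ \times P_n$ (which misses $\Delta$) to land in $W \otimes \bigl(\lieg[x]/x^{n+1}\bigr)$, then matching Laurent tails at $p$ against the expansion $\frac{1}{y-x}=\sum_{k\ge 0} y^{-k-1}x^k$ forced by $\res_\Delta^\omega(\varrho)=\gamma$ --- this is precisely the argument the paper imports from Theorem \ref{T:geometrizationRational}. Your only repackaging is cosmetic: where the paper expands the restricted section in the basis $\bigl(f_{k,l}\bigr)$ and solves for the coefficients $\lambda_{k,l}^{i,j}$, you invoke the equivalent uniqueness of the element of $W$ with prescribed principal part, and you additionally make explicit (via the pairing $\bar\kappa$) why the dual basis has the form $f_{k,l}=z^{-k-1}e_l+p_{k,l}$, a point the paper leaves to Remark \ref{R:ManinTriples}.
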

\begin{proof} Consider the point $(p, p) \in E \times \breve{E}$ (we intentionally underline the asymmetry of these two factors); let $y$ be a local coordinate at the point $p \in E$ and $x$ a local coordinate of $p \in \breve{E}$ chosen in the way explained above.  For any $n \in \NN_0$, consider the scheme $P_n := \Spec\bigl(\CC[z]/z^{n+1}\bigr)$ and the morphism $P_n \stackrel{\jmath_n}\lar \breve{E}$ mapping the closed point of $P_n$ to
$p \in \breve{E}$.
 We have a commutative diagram
$$
\xymatrix{
   & \Gamma\bigl(E \times \breve{E}, \kA \boxtimes \kA(\Delta)\bigr) \ar@{_{(}->}[ld] \ar@{^{(}->}[rd] &  \\
\Gamma\bigl(E_\circ \times \breve{E}\setminus \Delta, \kA \boxtimes \kA\bigr) \ar[d] &  &  \Gamma\bigl(\breve{E}  \times \breve{E}\setminus \Delta, \kA \boxtimes \kA\bigr) \ar[d] \\
\Gamma\bigl(E_\circ, \kA\bigr)\otimes \Gamma\bigl(P_n, \jmath_n^*\kA\bigr) \ar@{^{(}->}[rr] \ar[d]_-{\widetilde{\xi}_y \otimes \bar{\xi}_x} & &  \Gamma\bigl(\breve{E}_\circ, \kA\bigr)\otimes \Gamma\bigl(P_n, \jmath_n^*\kA\bigr) \ar@{_{(}->}[d] \\
W \otimes \bigl(\lieg[x]/(x^{n+1})\bigr) \ar@{^{(}->}[rr]  & &  (\lieg \otimes \lieg) \otimes \bigl(\CC\llbrace y\rrbrace[x]/(x^{n+1})\bigr).
}
$$
By the same argument as in Theorem \ref{T:geometrizationRational}
one can show that the image
of the distinguished element $\varrho \in \Gamma\bigl(E \times \breve{E}, \kA \boxtimes \kA(\Delta)\bigr)$ in the vector space $(\lieg \otimes \lieg) \otimes \bigl(\CC\llbrace y\rrbrace[x]/(x^{n+1})\bigr)$ is equal to
$
 r_n(y, x) :=  \sum\limits_{k=0}^n \sum\limits_{l = 1}^d f_{k,l}(y) \otimes x^k e_l,
$
implying the result.
\end{proof}

\section{Some explicit computations}
In this section, we compute the solutions of CYBE for the Lie algebra $\mathfrak{sl}_n(\CC)$ arising from pairs $(E, \kA)$ by the recipe of Theorem
\ref{T:CYBEtorsfree}, where $E$ is a singular Weierstra\ss{} curve  and $\kA$ a certain sheaf of Lie algebras, which is not locally free at the singular point $s \in E$.

\subsection{Category of triples}
\label{SS:catoftriples}

We first recall some general techniques to describe torsion free sheaves  on singular  curves, following \cite{DrozdGreuel, Thesis, Survey}.
Let $X$ be a reduced singular  curve, $\widetilde{X} \stackrel{\nu}\lar   X$
its normalisation and $\kI :=
{\mbox{\textit{Hom}}}_\kO\bigl(\pi_*(\kO_{\widetilde{X}}), \kO\bigr)$
the conductor ideal sheaf.
Denote  by $Z = V(\kI) \stackrel{\eta}\lar X$ the
closed subscheme   defined by $\kI$
(whose  topological support is precisely the singular locus of $X$)  and by
$\widetilde{Z} \stackrel{\tilde\eta}\lar \widetilde{X}$ its preimage in
$\widetilde{X}$, defined by the Cartesian  diagram
\begin{equation}\label{E:diagtriples}
\begin{array}{c}
\xymatrix
{\widetilde{Z} \ar[r]^{\tilde{\eta}} \ar[d]_{\tilde{\nu}}
& \widetilde{X} \ar[d]^\nu \\
Z \ar[r]^\eta & X.
}
\end{array}
\end{equation}

\begin{definition}\label{D:TFTriples}
  The \emph{category of triples} $\Tri(X)$ is defined as follows.
  \begin{itemize}
  \item Its objects are triples
    $\bigl(\widetilde\kF, \kV, \theta\bigr)$, where
    $\widetilde\kF \in \VB(\widetilde{X})$, $\kV \in \Coh(Z)$ and
    $\tilde{\nu}^*\kV \stackrel{\theta}\lar \tilde{\eta}^*\widetilde\kF$ is an
    epimorphism in $\Coh(\widetilde{Z})$  such that the adjoint morphism in $\Coh(Z)$
    $$
    \kV \lar \tilde{\nu}_*\bigl(\tilde{\nu}^*\kV\bigr) \stackrel{\tilde{\nu}_*(\theta)}\lar \tilde{\nu}_*\bigl(\tilde{\eta}^*\widetilde\kF\bigr)
    $$
is a monomorphism.
  \item The set of morphisms
    $\Hom_{\Tri(X)}\bigl((\widetilde\kF_1, \kV_1, \theta_1),
    (\widetilde\kF_2, \kV_2, \theta_2)\bigr)$ consists of all pairs
    $(f, g)$, where $\widetilde\kF_1 \stackrel{f}\lar  \widetilde\kF_2$ and
    $\kV_1 \stackrel{g}\lar \kV_2$
    are morphisms of coherent sheaves  such that the following
    diagram
    $$
    \xymatrix
    {\tilde{\nu}^*\kV_1 \ar[rr]^-{\theta_1}\ar[d]_{\tilde{\nu}^*(g)} & &
      \tilde{\eta}^*\widetilde\kF_1
      \ar[d]^{\tilde{\eta}^*(f)} \\
      \tilde{\nu}^*\kV_2 \ar[rr]^-{\theta_2}
      & & \tilde{\eta}^*\widetilde\kF_2
    }
    $$
    is commutative.
  \end{itemize}
\end{definition}

\begin{theorem}\label{T:keyonTF}
  Let $X$ be a reduced curve. For any torsion free sheaf $\kF$ on $X$, consider the canonical morphism $\theta_{\kF}:
    \tilde{\pi}^*(\eta^*\kF) \lar \tilde\eta^*(\pi^*\kF) \lar \tilde\eta^*(\nu^*\kF/\mathsf{tor}(\nu^*\kF))$.
  Then the functor
  \begin{equation}
 \TF(X) \stackrel{\mathbb{F}}\lar \Tri(X), \quad \kF \mapsto  \bigl(\nu^*\kF/\mathsf{tor}(\nu^*\kF), \eta^*\kF,
    \theta_{\kF}\bigr)
  \end{equation}
  is an equivalence of categories. Moreover, let  $\Tri^{\mathsf{lf}}(X)$ be  the full subcategory of $\Tri(X)$ consisting of those objects $\bigl(\widetilde\kF, \kV, \theta\bigr)$, for which $\kV$ is a free $\kO_Z$--module and $\theta$ is an isomorphism. Then $\FF$ restricts on an equivalence of categories
  $\VB(X) \lar \Tri^{\mathsf{lf}}(X)$. Finally, for any torsion free sheaf $\kF$ and locally free sheaf $\kG$ we have an isomorphism
  $$
  \mathbb{F}(\kF \otimes \kG) \cong \mathbb{F}(\kF)  \otimes \mathbb{F}(\kG),
  $$
  where the tensor product in $\Tri(X)$ is defined in a straightforward way.
\end{theorem}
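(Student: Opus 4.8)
The plan is to construct an explicit quasi-inverse functor $\Tri(X) \stackrel{\mathbb{G}}\lar \TF(X)$ and to check that the unit $\mathrm{id} \Rightarrow \mathbb{G}\mathbb{F}$ and counit $\mathbb{F}\mathbb{G} \Rightarrow \mathrm{id}$ are isomorphisms. Everything is local on $X$, and the whole argument rests on the observation that the conductor square (\ref{E:diagtriples}) is a \emph{Milnor (codescent) square}: the sequence of sheaves on $X$
$$
0 \lar \kO_X \lar \nu_*\kO_{\widetilde{X}} \oplus \eta_*\kO_Z \lar \eta_*\widetilde{\nu}_*\kO_{\widetilde{Z}} \lar 0
$$
is exact, exhibiting $\kO_X$ as the fibre product $\nu_*\kO_{\widetilde{X}} \times_{\eta_*\widetilde{\nu}_*\kO_{\widetilde{Z}}} \eta_*\kO_Z$; this is immediate from the definition of the conductor ideal $\kI$ together with the fact that $\widetilde{\eta}^{*}$ and $\eta^{*}$ are restrictions to the Artinian subschemes $\widetilde{Z}$ and $Z$.

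Concretely, for a triple $(\widetilde{\kF}, \kV, \theta)$ I would set $\mathbb{G}(\widetilde{\kF}, \kV, \theta)$ equal to the kernel of the difference of the two canonical maps
$$
\nu_*\widetilde{\kF} \oplus \eta_*\kV \lar \eta_*\widetilde{\nu}_*\widetilde{\eta}^{*}\widetilde{\kF},
$$
where the first summand maps by the restriction $\nu_*\widetilde{\kF} \to \eta_*\widetilde{\nu}_*\widetilde{\eta}^{*}\widetilde{\kF}$ (the unit of $\widetilde{\eta}^{*} \dashv \widetilde{\eta}_*$, using $\nu\widetilde{\eta} = \eta\widetilde{\nu}$) and the second by $\eta_*$ of the adjoint monomorphism $\kV \hookrightarrow \widetilde{\nu}_*\widetilde{\eta}^{*}\widetilde{\kF}$ supplied by the definition of $\Tri(X)$. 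Since $\nu_*\widetilde{\kF}$ is torsion free on $X$ and the fibre product only modifies sections over the singular locus $Z$, the sheaf $\mathbb{G}(\widetilde{\kF}, \kV, \theta)$ is again torsion free; coherence is clear because $\nu$ and $\eta$ are finite.

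The core of the proof is then the local verification that the two natural transformations are isomorphisms. For $\mathbb{G}\mathbb{F}(\kF) \cong \kF$ with $\kF$ torsion free, I would use that the canonical map $\kF \to \nu_*\bigl(\nu^{*}\kF/\mathsf{tor}(\nu^{*}\kF)\bigr)$ is a monomorphism with cokernel supported on $Z$, and show that $\kF$ coincides with the fibre product of this sheaf and $\eta_*\eta^{*}\kF$ over their common restriction to $\widetilde{Z}$; this is precisely the codescent statement for the Milnor square applied to the $\kO_X$-module $\kF$. For $\mathbb{F}\mathbb{G} \cong \mathrm{id}$ one computes $\nu^{*}\mathbb{G}(\cdots)/\mathsf{tor} \cong \widetilde{\kF}$ and $\eta^{*}\mathbb{G}(\cdots) \cong \kV$, recovering $\theta$; here the hypotheses that $\theta$ is an epimorphism and that its adjoint $\kV \to \widetilde{\nu}_*\widetilde{\eta}^{*}\widetilde{\kF}$ is a monomorphism are used essentially, since they guarantee that the gluing datum is neither lossy nor degenerate. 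I expect this local codescent computation---carried out in a local model of each singularity, e.g.\ $\kk\llbracket x, y\rrbracket/(xy)$ for a node---to be the main obstacle; it can either be done by hand in coordinates or invoked from Ferrand's theory of pinching and descent along conductor squares.

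The remaining two assertions are then quick. For the restriction to vector bundles, $\widetilde{\kF}$ is automatically locally free on the smooth curve $\widetilde{X}$, so $\kF = \mathbb{G}(\widetilde{\kF}, \kV, \theta)$ is locally free if and only if the gluing is ``full'', i.e.\ $\kV$ is free over $\kO_Z$ and $\theta$ is an isomorphism; the forward direction follows since $\eta^{*}\kF$ is then free and $\theta_\kF$ an isomorphism, and the converse is Milnor patching for projective modules over the square above. This identifies $\VB(X)$ with $\Tri^{\mathsf{lf}}(X)$. Finally, for the tensor compatibility I would use that $\kG$ locally free forces $\nu^{*}\kG$ locally free, whence $\nu^{*}(\kF\otimes\kG) \cong \nu^{*}\kF \otimes \nu^{*}\kG$ and $\mathsf{tor}\bigl(\nu^{*}(\kF\otimes\kG)\bigr) \cong \mathsf{tor}(\nu^{*}\kF)\otimes\nu^{*}\kG$, so the first component of $\mathbb{F}(\kF\otimes\kG)$ is $(\nu^{*}\kF/\mathsf{tor})\otimes\nu^{*}\kG$; likewise $\eta^{*}(\kF\otimes\kG) \cong \eta^{*}\kF\otimes\eta^{*}\kG$, and $\theta_{\kF\otimes\kG}$ is identified with $\theta_\kF\otimes\theta_\kG$ via the base-change isomorphisms. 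This is exactly the definition of the tensor product on $\Tri(X)$, giving the desired natural isomorphism.
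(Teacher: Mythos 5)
Your argument is correct and is essentially the proof the paper relies on: the paper itself gives no proof of Theorem \ref{T:keyonTF}, deferring to \cite[Theorem 1.3]{Thesis} and \cite[Theorem 16]{Survey}, where the quasi-inverse is constructed exactly as you propose---as the kernel of $\nu_*\widetilde{\kF}\oplus\eta_*\kV \lar \eta_*\tilde{\nu}_*\tilde{\eta}^*\widetilde{\kF}$ glued along the conductor (Milnor) square, with the same local codescent verification at each singular point and the same essential use of the epimorphism condition on $\theta$ and the monomorphism condition on its adjoint. One minor sharpening: torsion-freeness of $\mathbb{G}(\widetilde{\kF},\kV,\theta)$ follows cleanly from the observation that the monomorphism condition forces the projection of this kernel to $\nu_*\widetilde{\kF}$ to be injective, which is slightly more precise than your remark that the fibre product ``only modifies sections over $Z$''.
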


\noindent
A proof of this Theorem can be found in  \cite[Theorem 1.3]{Thesis} or \cite[Theorem 16]{Survey}. \qed

\subsection{Torsion free sheaves on singular Weierstra\ss{} curves} Let $E$ be a singular Weierstra\ss{} cubic, $\PP^{1} \stackrel{\nu}\lar E$ its normalization. In both cases we have: $Z = \Spec(\CC)$, whereas
$$
\widetilde{Z} =
\left\{
\begin{array}{ll}
\Spec\bigl(\CC \times \CC\bigr)  & \mbox{\rm nodal case}\\
\Spec\bigl(\CC[\varepsilon]/(\varepsilon^2)\bigr)  & \mbox{\rm cuspidal case}.
\end{array}
\right.
$$
 Choose homogeneous coordinates $(z_0: z_1)$ on $\PP^1$ so that the condition (\ref{E:PreimageSing}) is satisfied. We shall identify  $y \in \CC$ we the point $(1: y)$ of $\PP^1$.
The choice of homogeneous coordinates on $\PP^1$  defines two  distinguished sections
$z_0, z_1 \in H^0\bigl(\PP^1, \kO_{\PP^1}(1)\bigr)$. As a consequence, for any $c \in \mathbbm{N}$, we get
a distinguished basis of the vector space $\Hom_{\PP^1}\bigl(\kO_{\PP^1}, \kO_{\PP^1}(c)\bigr)$ given by
the monomials $z_0^c, z_0^{c-1} z_1, \dots, z_1^c$.
It is convenient to choose  the following trivialization:
$$
\kO_{\PP^1}(1)\Big|_{Z} \stackrel{\xi}\lar \kO_Z,  \quad f \mapsto
\left\{
\begin{array}{lc}
\left(\dfrac{f}{z_0\big|_{V_0}}, \dfrac{f}{z_1\big|_{V_\infty}}\right) & \mbox{\rm nodal case}\\
\left(\dfrac{f}{z_1\big|_{V_\infty}}\right) & \mbox{\rm cuspidal case},
\end{array}
\right.
$$
where $f$ is a local section of $\kO_{\PP^1}(1)$, whereas $V_0$ and $V_\infty$ are open neighbourhoods of $0$ and $\infty$ respectively. This choice then defines trivializations $\kO_{\PP^1}(c)\Big|_{\widetilde{Z}} \stackrel{\xi_c}\lar \kO_Z$ for any $c \in \mathbb{Z}$, which are compatible with the  isomorphisms
$\kO_{\PP^1}(c_1) \otimes \kO_{\PP^1}(c_2) \cong \kO_{\PP^1}(c_1+c_2)$.
By Theorem \ref{T:keyonTF}, any torsion free sheaf $\kF$ of rank $n$ on $E$ is determined by the corresponding triple
$\bigl(\widetilde\kF, \CC^m, \Theta \bigr) \cong \mathbb{F}(\kF)$, where
\begin{equation}\label{E:triplesconcrete}
\Theta =
\left\{
\begin{array}{cl}
\bigl(\Theta_0, \Theta_\infty\bigr)  & \mbox{\rm nodal case}\\
\Theta_\circ + \varepsilon \Theta_\varepsilon & \mbox{\rm cuspidal case}
\end{array}
\right.
\quad \mbox{\rm with} \quad \Theta_0, \Theta_\infty, \Theta_\circ, \Theta_\varepsilon \in \mathsf{Mat}_{n \times m}(\CC).
\end{equation}
It is not difficult to show that
\begin{equation}\label{E:EulerCharact}
\chi(\kF)  = \mathsf{deg}\bigl(\widetilde{\kF}\bigr) + (m-n);
\end{equation}
see e.g.~\cite[Lemma 5.1.6]{BK4}.
Next, by the theorem of Birkhoff--Grothendieck, any vector bundle
$\widetilde\kF$ on $\PP^1$ splits into a direct sum of line bundles:
$
\widetilde\kF \cong \bigoplus\limits_{c \in \mathbb{Z}} \bigl(\kO_{\PP^1}(c)\bigr)^{\oplus m_c}.
$
Therefore, a description of the isomorphism  classes of objects in the category $\Tri(E)$ (and, as a consequence, of $\TF(E)$), reduces to a certain matrix problem \cite{DrozdGreuel, Thesis, Burban1, BodnarchukDrozd, Survey, BodnarchukDrozd2}.

\begin{lemma}
Let $E$ be a singular Weierstra\ss{} curve and $y \in \breve{E}$ a smooth point. In terms ob the identifications made above, we have:
\begin{equation}
\mathbb{F}\bigl(\kO_E(y)\bigr) \cong \bigl(\kO_{\PP^1}(1), \CC, \theta_y\bigr),
\end{equation}
where $\theta_y = \bigl((1), (-y)\bigr)$ in the nodal case and $\theta_y = \bigl((1) - \varepsilon (y)\bigr)$ in the cuspidal case.
\end{lemma}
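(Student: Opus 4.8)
The plan is to compute the triple $\mathbb{F}\bigl(\kO_E(y)\bigr)$ directly from the definition of the equivalence in Theorem~\ref{T:keyonTF}, exploiting that $\kO_E(y)$ is a line bundle (since $y$ is a smooth point). Consequently $\mathbb{F}\bigl(\kO_E(y)\bigr)$ lies in $\Tri^{\mathsf{lf}}(E)$: its coherent sheaf on $Z$ is free of rank one and its gluing datum is an isomorphism. The task thus reduces to identifying three pieces of data separately: the pullback $\nu^*\kO_E(y)$ to $\PP^1$ modulo torsion, the restriction $\eta^*\kO_E(y)$ to the singular point $s$, and the comparison morphism $\theta_y$ over $\widetilde{Z}$.

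First I would treat the vector bundle part. Since $y \in \breve{E}$ is smooth, $\nu$ is an isomorphism over a neighbourhood of $y$ and $\nu^{-1}(y) = \{(1:y)\}$ is a reduced point; hence $\nu^*\kO_E(y) \cong \kO_{\PP^1}\bigl((1:y)\bigr) \cong \kO_{\PP^1}(1)$, and being locally free it has no torsion, so $\nu^*\kO_E(y)/\mathsf{tor} = \nu^*\kO_E(y)$. This is consistent with the numerology of (\ref{E:EulerCharact}): $\chi\bigl(\kO_E(y)\bigr) = 1$ forces $\mathsf{deg}(\widetilde{\kF}) + (m-n) = 1$ with $n = m = 1$, giving $\mathsf{deg}(\widetilde{\kF}) = 1$. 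The restriction to the singular locus is immediate: $\eta^*\kO_E(y) = \kO_E(y)\big|_s$ is one dimensional, so $\kV \cong \CC$. Moreover the inclusion $\kO_E \hookrightarrow \kO_E(y)$ is an isomorphism in a neighbourhood of $s$ (as $s \neq y$), and this is what will let me trivialise $\kV$ by a canonical generator.

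The heart of the argument is the gluing morphism $\theta_y$. I would track the canonical section $s_y \in \Gamma\bigl(E, \kO_E(y)\bigr)$, the image of $1 \in \kO_E$ under $\kO_E \hookrightarrow \kO_E(y)$; it generates $\kO_E(y)$ near $s$ and so furnishes the generator of $\kV$ in which $\theta_y$ is written as a matrix. Its pullback $\nu^*s_y$ is the section of $\kO_{\PP^1}(1)$ whose divisor is $(1:y)$, i.e.\ a scalar multiple of $z_1 - y z_0$. It then remains to evaluate this section in the trivialisation $\xi$: in the nodal case one divides by $z_0$ on $V_0$ and by $z_1$ on $V_\infty$ and specialises at the two points of $\widetilde{Z} = \{0,\infty\}$, while in the cuspidal case one divides by $z_1$ near $\infty$ and expands to first order, the local parameter at $\infty$ restricting to $\varepsilon$ on $\widetilde{Z} = \Spec\bigl(\CC[\varepsilon]/(\varepsilon^2)\bigr)$. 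Carrying out these evaluations produces exactly the entries $1$ and $-y$, assembled into $\theta_y = \bigl((1),(-y)\bigr)$ in the nodal case and $\theta_y = (1) - \varepsilon(y)$ in the cuspidal case.

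The step I expect to be delicate is precisely this last piece of bookkeeping: one must align the abstract comparison map $\theta_{\kF}$ of Theorem~\ref{T:keyonTF} (defined through the adjunction $\tilde{\nu}^*\kV \to \tilde{\eta}^*\widetilde{\kF}$) with the explicit trivialisations $\xi$ and $\xi_c$ fixed above, and keep careful track of which branch of $\widetilde{Z}$ carries which value in the nodal case, respectively of the constant versus nilpotent part at the cusp. An error at this point would only permute or rescale the entries, so the geometric content is robust; once the generator $s_y$ and the identification $\nu^*\kO_E(y) \cong \kO_{\PP^1}(1)$ are chosen compatibly with $\xi$, the remaining computation is entirely routine.
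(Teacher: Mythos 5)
Your proposal is correct in method, and it is in fact \emph{more} self--contained than the paper: the paper does not prove this lemma at all, but simply refers to \cite[Lemma 5.1.2 and Lemma 5.1.27]{BK4}, adding the caveat that the nodal trivialization conventions used there differ from the ones fixed in this paper and ``lead to a deviation in the final description of the corresponding triples.'' Your plan --- identify $\nu^*\kO_E(y)/\mathsf{tor}\cong\kO_{\PP^1}(1)$ by the canonical section $s_y$, note that $\kV\cong\CC$ is generated by the germ of $s_y$ at $s$ (where $\kO_E\hookrightarrow\kO_E(y)$ is an isomorphism), and read off $\theta_y$ by evaluating $\nu^*s_y$ in the trivialization $\xi$ --- is exactly the computation that the cited lemmas of \cite{BK4} carry out, so the route is right and complete in outline.

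However, the step you flag as delicate is where your write--up stops short, and your safety claim that an error there ``would only permute or rescale the entries, so the geometric content is robust'' is false in the nodal case. The two entries of $\theta_y$ are only defined up to a \emph{common} scalar (rescaling the generator of $\kV$ or the identification $\widetilde\kF\cong\kO_{\PP^1}(1)$ scales both simultaneously, and $\Aut\bigl(\kO_{\PP^1}(1)\bigr)=\CC^*$), so a transposition of the entries changes the isomorphism class: a morphism $(\mu,\lambda)$ from $\bigl((1),(-y)\bigr)$ to $\bigl((-y),(1)\bigr)$ forces $\mu=\mu y^2$, hence these triples are non--isomorphic for $y^2\neq 1$; in fact $\bigl((1),(-y)\bigr)\sim\bigl((-1/y),(1)\bigr)$, so the transposition implements $y\mapsto 1/y$, i.e.\ it exchanges $\kO_E(y)$ and $\kO_E(1/y)$. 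Carrying out your own recipe literally with the paper's identification of $y$ with $(1:y)$: one has $\nu^*s_y = z_1-yz_0$ up to scalar, and
$$
\frac{z_1-yz_0}{z_0}\Big|_{z=0}=-y, \qquad \frac{z_1-yz_0}{z_1}\Big|_{w=0}=1,
$$
giving $\bigl(\Theta_0,\Theta_\infty\bigr)=\bigl((-y),(1)\bigr)$ rather than the printed $\bigl((1),(-y)\bigr)$; the latter corresponds to the opposite labelling of the two branches (equivalently, identifying $y$ with $(y:1)$). Note that the paper's own later use of the lemma is consistent with the computation just made: the factor $-x$ in the $\Theta_0$--equation of (\ref{E:SolNode}) arises precisely from $\theta_0=(-x)$, $\theta_\infty=(1)$. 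So to turn your proposal into a proof you must actually perform this evaluation and fix the branch--labelling convention explicitly, rather than assert the outcome and appeal to robustness. Your cuspidal computation, by contrast, comes out exactly as stated: $\dfrac{z_1-yz_0}{z_1}=1-yw\equiv (1)-\varepsilon(y)$ modulo $\varepsilon^2$, in agreement with both the lemma and (\ref{E:SolCusp}).
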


\begin{proof} We refer to \cite[Lemma 5.1.2  and Lemma 5.1.27]{BK4} for a detailed treatment, both of the nodal and the cuspidal cases. Note that
in the nodal case, slightly different conventions to trivialize $\kO_{\PP^1}(1)\Big|_{\widetilde{Z}}$ were made in \cite[Lemma 5.1.2]{BK4}, leading to a deviation in the final description of the corresponding triples.
\end{proof}

\begin{proposition}\label{P:simpleTF}
Let $\kQ$ be a torsion free sheaf on a singular Weierstra\ss{} curve $E$, $n = \mathsf{rk}(\kQ)$ and $d = \chi(\kQ)$. Then the following results are true.
\begin{itemize}
\item If $\kQ$ is simple (i.e.~$\End_E(\kQ) \cong \CC$) then $\mathsf{gcd}(n, d) = 1$.
\item Other way around, for any $(n, d) \in \mathbb{N} \times \mathbb{Z}$ such that $\mathsf{gcd}(n, d) = 1$, there exists a unique (up a an isomorphism) simple torsion free and not locally free sheaf $\kQ$  of rank $n$ and Euler characteristic $d$.
\item For $d = 1$, this torsion free sheaf is given by the triple $\bigl(\kO_{\PP^1}^{\oplus n}, \CC^{n+1}, \Theta\bigr)$, where
\begin{equation}\label{E:canformnodal}
\Theta = \bigl(\Theta_0, \Theta_\infty\bigr) =
\left(
\left(
\begin{array}{cccc}
0 & 1 & \dots &  0 \\
\vdots & \vdots & \ddots & \vdots  \\
0 & 0 & \dots &  1
\end{array}
\right),
\;
\left(
\begin{array}{cccc}
1 &  \dots & 0 & 0  \\
\vdots & \ddots & \vdots & \vdots  \\
0 &  \dots  & 1 & 0
\end{array}
\right)
\right).
\end{equation}
in the nodal case, and by
\begin{equation}\label{E:canformcusp}
\Theta = \Theta_\circ + \varepsilon \Theta_\varepsilon =
\left(
\begin{array}{cccc}
1 &  \dots & 0 & 0  \\
\vdots & \ddots & \vdots & \vdots  \\
0 &  \dots  & 1 & 0
\end{array}
\right)
+ \varepsilon
\left(
\begin{array}{cccc}
0 & 1 & \dots &  0 \\
\vdots & \vdots & \ddots & \vdots  \\
0 & 0 & \dots &  1
\end{array}
\right),
\end{equation}
in the cuspidal case (here, we follow the notation (\ref{E:triplesconcrete})).
\item Let $O$ be the completion of the local ring $\kO_{s}$ and $\widetilde{O}$ be its normalization. Then we have:
$\widehat{\kQ}_s \cong O^{(n-1)} \oplus \widetilde{O}$.
\end{itemize}
\end{proposition}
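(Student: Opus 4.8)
The plan is to reduce the assertion to a purely local statement at $s$ and then read off the decomposition from the triple attached to $\kQ$ by Theorem \ref{T:keyonTF}. First I would pass to the completion $O=\widehat{\kO}_{E,s}$. As $\kQ$ is torsion free on the Gorenstein curve $E$, the module $\widehat{\kQ}_s$ is maximal Cohen--Macaulay over $O$, and $O$ is a simple curve singularity: of type $A_1$ (node) or $A_2$ (cusp). Both have finite Cohen--Macaulay representation type; the indecomposable MCM modules are $O$ together with the two branch modules $B_1,B_2$ (whose sum is $\widetilde{O}$) in the nodal case, and $O$ together with $\widetilde{O}$ itself in the cuspidal case. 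Since $E$ is irreducible, $\widehat{\kQ}_s$ has generic rank $n$ along each local branch; in the nodal case this forces the two branch multiplicities to agree, so in both situations
$$
\widehat{\kQ}_s\;\cong\;O^{a}\oplus\widetilde{O}^{\,c},\qquad a+c=n .
$$
Computing fibres, and using that for both singularities the conductor equals the maximal ideal (so that $O/\mathfrak{m}\cong\CC$ while $\widetilde{O}/\mathfrak{m}\widetilde{O}$ has dimension $2$), one obtains $\dim_{\CC}\kQ\big|_{s}=a+2c=n+c$. Thus the entire statement is equivalent to the single numerical identity $c=1$.

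Now $c\ge 1$ is automatic: if $c=0$ then $\widehat{\kQ}_s\cong O^{n}$ is free and $\kQ$, being locally free away from $s$ as well, would be locally free, contrary to hypothesis. For the reverse inequality I would use the triple. Under the equivalence $\FF$ the vector space $\kV=\eta^{\ast}\kQ$ is precisely the fibre $\kQ\big|_{s}$, so $\dim_{\CC}\kV=n+c$. When $d=1$ this is immediate from the canonical forms (\ref{E:canformnodal}), (\ref{E:canformcusp}): there $\kV=\CC^{\,n+1}$, hence $n+c=n+1$ and $c=1$, yielding $\widehat{\kQ}_s\cong O^{n-1}\oplus\widetilde{O}$ at once. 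One may also verify this by hand: the gluing $\Theta$ identifies the lowest coefficients of consecutive branch components, the resulting constraints telescope into $n-1$ ``matched'' pairs, each a copy of $O$, plus two unmatched branch directions (the cuspidal case being entirely parallel) that together form a single copy of $\widetilde{O}$.

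The main obstacle is the general case $\mathsf{gcd}(n,d)=1$ with $d\neq 1$, where one must show that the simple sheaf still has minimal local defect $c=1$, equivalently $\mathsf{deg}(\widetilde{\kF})=d-1$ by (\ref{E:EulerCharact}). Here the relation $\chi(\kQ)=\mathsf{deg}(\widetilde{\kF})+c$ leaves $c$ undetermined, so the missing constraint must come from simplicity. For $c\ge 2$ the local endomorphism algebra $\End_{O}(\widehat{\kQ}_s)$ contains a copy of $\Mat_{c}(\widetilde{O})$ and hence nontrivial idempotents; the crux is that for the normal form of the gluing datum produced by the matrix problem underlying $\Tri(E)$ (the same classification that yields the uniqueness statement above; see \cite{DrozdGreuel, Thesis, Survey, BK3}) such a local idempotent lifts to a global endomorphism of $\kQ$, contradicting $\End_{E}(\kQ)\cong\CC$. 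I would therefore invoke that classification to place $\kQ$ in the normal form with $\kV=\CC^{\,n+1}$; the fibre computation of the first paragraph then gives $c=1$ and $\widehat{\kQ}_s\cong O^{n-1}\oplus\widetilde{O}$ in every case. The genuinely hard point is thus not the local algebra but the lifting of local idempotents, which is precisely what the matrix--problem classification of simple objects in $\Tri(E)$ supplies.
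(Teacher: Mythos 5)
Your proposal is correct in substance but reaches the local statement by a genuinely different route than the paper. The paper's proof is almost entirely a computation inside $\Tri(E)$: the first two bullets are quoted from \cite[Corollary 4.3 and Corollary 4.5]{BK3}; simplicity of the triples (\ref{E:canformnodal}) and (\ref{E:canformcusp}) is verified directly from the definition of $\Tri(E)$, so the third bullet follows from (\ref{E:EulerCharact}) together with the cited uniqueness; and the last bullet is obtained by completing the canonical triple at $s$, using that the functor $\FF$ of Theorem \ref{T:keyonTF} commutes with localization and completion. You instead route the local statement through the finite CM--type structure theory of the $A_1$ and $A_2$ singularities: $\widehat{\kQ}_s \cong O^a \oplus \widetilde{O}^c$ with $a+c=n$ (your branch--rank argument for the node is valid because $E$ is irreducible, so both analytic branch ranks equal the generic rank $n$), then the fibre count $\dim_\CC \kV = \dim_\CC \kQ\big|_s = n+c$ (correct, since the conductor is $\mathfrak{m}$, hence $Z = \Spec(\CC)$ and $\eta^*\kQ = \kQ\big|_s$), and finally $c=1$ read off from $\kV = \CC^{n+1}$. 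What your approach buys is a transparent reduction of the entire last bullet to the single invariant $c$, with ``not locally free'' $\Leftrightarrow c \ge 1$; it also makes your telescoping check rigorous with no case analysis: $\theta$ epimorphic gives $\widetilde{O}\cdot M = \widetilde{O}^n$ and $\dim_\CC\bigl(\widetilde{O}^n/M\bigr) = 2n-(n+1) = n-1$, while $O^a\oplus\widetilde{O}^c$ has colength $a$ in $\widetilde{O}^n$ (as $\widetilde{O}/O$ has length one for both singularities), forcing $(a,c)=(n-1,1)$. What the paper's route buys is independence from the classification of MCM modules over simple curve singularities, which you invoke without citation (it is classical, but should be referenced).

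Two caveats. First, you never address the first bullet (simple implies $\mathsf{gcd}(n,d)=1$); the paper disposes of it by the same citation to \cite{BK3} as the second bullet, and your appeal to ``the classification'' explicitly covers only uniqueness. Second, you are right to distrust your idempotent argument: idempotents of $\End_O\bigl(\widehat{\kQ}_s\bigr)$ do not lift to $\End_E(\kQ)$ in general (which is exactly why the local type is not detected by $\chi$), so for $d \ne 1$ the equality $c=1$ genuinely requires the canonical forms for arbitrary coprime $(n,d)$. Note that the paper does not prove this inside Proposition \ref{P:simpleTF} either: within the proposition only the $d=1$ sheaf is exhibited, and the general--$d$ case of the last bullet is deferred to the remark following Proposition \ref{P:LieAlgTorsionFree}, citing \cite{BodnarchukDrozd2}. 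So your hand--off to the matrix--problem classification is at the same level of dependence as the paper's own proof, and for $d=1$ your argument is complete; the only self--contained step of the paper you drop is the direct verification in $\Tri(E)$ that the triples (\ref{E:canformnodal}) and (\ref{E:canformcusp}) are simple.
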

\begin{proof}
The first two statements are consequences of  \cite[Corollary 4.3 and Corollary 4.5]{BK3}. It follows from the definition
of the category $\Tri(E)$ that triples defined by (\ref{E:canformnodal}) and (\ref{E:canformcusp}) are actually simple. Hence, the third statement follows from the formula (\ref{E:EulerCharact}). Finally, the last statement follows from the fact that the functor $\mathbb{F}$ from Theorem \ref{T:keyonTF}
commutes with the functors of taking localizations and completions.
\end{proof}

\begin{proposition}\label{P:LieAlgTorsionFree} Let $\kQ$ be a simple torsion free and not locally free sheaf on $E$ of rank $n$ and Euler characteristic one. Consider the sheaf of Lie algebras $\kA = \overline{\mbox{\it Ad}}(\kQ)$ on $E$ defined by the short exact sequence
\begin{equation}\label{E:AdTorsionFree}
0 \lar \kA \lar \mbox{\it End}_E(\kQ) \stackrel{\mathsf{tr}}\lar \widetilde{\kO} \lar 0,
\end{equation}
where  $\mathsf{tr}$ is the composition $\mbox{\it End}_E(\kQ) \stackrel{\mathsf{can}}\lar \nu_*\bigl(\mbox{\it End}_{\PP^1}(\widetilde{\kQ})\bigr) \stackrel{\widetilde{\mathsf{tr}}}\lar \widetilde\kO$ for $\widetilde{\kO} := \nu_*\bigl(\kO_{\PP^1}\bigr)$ and $\widetilde\kQ:= \nu_*\bigl(\nu^*\kF/\mathsf{tor}(\nu^*\kF)\bigr) \cong
\widetilde\kO^{\oplus n}$. Then the following results are true.
\begin{itemize}
\item $H^0(E, \kA) = 0 = H^1(E, \kA)$ and $\Gamma\bigl(\breve{E}, \kA\bigr) \cong \lieg \otimes \Gamma\bigl(\breve{E}, \kO_E\bigr)$.
\item $A_s$ is a coisotropic Lie subalgebra of the rational hull $A_K$.
\end{itemize}
In other words, the pair $(E, \kA)$ satisfies all conditions of Theorem \ref{T:CYBEtorsfree}.
\end{proposition}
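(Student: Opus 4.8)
The plan is to reduce both assertions to a single local computation at the singular point $s$. By Proposition \ref{P:simpleTF} we have $\widehat{\kQ}_s \cong O^{\oplus(n-1)}\oplus\widetilde{O}$, so after completing at $s$ the germ of $\mbox{\it End}_E(\kQ)$ becomes $\End_O(M)$ with $M := O^{\oplus(n-1)}\oplus\widetilde{O}$, which I would realize as an order inside $\Mat_n(\widetilde{O}) = \End_{\widetilde{O}}(\widetilde{O}^{\oplus n})$ (the completion of $\nu_*\mbox{\it End}_{\PP^1}(\widetilde{\kQ}) = \Mat_n(\widetilde{\kO})$). In the block decomposition coming from $(n-1)+1$, an $O$-linear endomorphism of $M$ has its upper-left $(n-1)\times(n-1)$ entries in $\Hom_O(O,O) = O$, its last-column entries (in the first $n-1$ rows) in $\Hom_O(\widetilde{O},O) = \mathfrak{c}$, the conductor, and every entry of its last row in $\Hom_O(O,\widetilde{O}) = \End_O(\widetilde{O}) = \widetilde{O}$. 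In both the nodal and the cuspidal case one has $\dim_\CC(\widetilde{O}/O) = 1$ and $\dim_\CC(\widetilde{O}/\mathfrak{c}) = 2$, and these two numbers are all that is needed below.

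For the first bullet, over $\breve{E}$ the map $\nu$ is an isomorphism, $\widetilde{\kO}|_{\breve{E}} = \kO_{\breve{E}}$ and $\tr$ is the ordinary trace, so $\kA|_{\breve{E}} = \Ad(\kQ|_{\breve{E}})$ is weakly $\lieg$-locally free; since $\breve{E}$ is affine with coordinate ring a principal ideal domain ($\CC[z,z^{-1}]$ in the nodal, $\CC[z]$ in the cuspidal case), the bundle $\kQ|_{\breve{E}}$ is free, giving the trivialization $\Gamma(\breve{E},\kA)\cong\lieg\otimes_\CC\Gamma(\breve{E},\kO_E)$. For the global cohomology I would use the canonical inclusion $\mbox{\it End}_E(\kQ)\hookrightarrow\Mat_n(\widetilde{\kO})$, whose cokernel $\kT$ is a skyscraper at $s$; the block description gives $\dim_\CC H^0(\kT) = (n-1)^2 + 2(n-1) = n^2-1$. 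Because $H^i(E,\Mat_n(\widetilde{\kO})) = \Mat_n\bigl(H^i(\PP^1,\kO_{\PP^1})\bigr)$ we have $H^0 = \Mat_n(\CC)$ and $H^1 = 0$, so combined with the simplicity $H^0(E,\mbox{\it End}_E\kQ) = \End_E(\kQ) = \CC$ the long exact sequence reduces to $0\to\CC\to\Mat_n(\CC)\to H^0(\kT)\to H^1(E,\mbox{\it End}_E\kQ)\to 0$, whose Euler characteristic $1 - n^2 + (n^2-1) - \dim_\CC H^1 = 0$ forces $H^1(E,\mbox{\it End}_E\kQ) = 0$. Applying $\Gamma(E,-)$ to $0\to\kA\to\mbox{\it End}_E\kQ\xrightarrow{\tr}\widetilde{\kO}\to 0$ and noting $\tr(\id_\kQ) = n\neq 0$, the map $H^0(\mbox{\it End}_E\kQ)\to H^0(\widetilde{\kO})$ is the isomorphism $\CC\xrightarrow{\,\cdot n\,}\CC$; since $H^1(\widetilde{\kO}) = 0$ this yields $H^0(E,\kA) = 0 = H^1(E,\kA)$.

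For the coisotropy, since $\kA\subseteq\mbox{\it End}_E\kQ$ carries the standard $n$-dimensional representation, the Killing form of $A_K$ is a nonzero multiple of the trace form $(f,g)\mapsto\tr(fg)$, so it is enough to show $\res_s^\omega\bigl(\tr(fg)\bigr) = 0$ for all $f,g\in A_s$. The key observation is that $\tr(fg)$ in fact lies in $\kO_{E,s}$ and not merely in $\widetilde{\kO}_s$: writing $\tr(fg) = \sum_{i,j}f_{ij}g_{ji}$ in the block notation above, every summand lies in $O$ because $\mathfrak{c}\cdot\widetilde{O}\subseteq O$, with the sole possible exception of $f_{nn}g_{nn}\in\widetilde{O}$; but the traceless condition $\sum_i f_{ii} = 0$, whose first $n-1$ diagonal entries already lie in $O$, forces $f_{nn}\in O$, and likewise $g_{nn}\in O$, whence $f_{nn}g_{nn}\in O$ as well. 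Thus $\tr(fg)\in\kO_{E,s}$, and the defining Rosenlicht property of a global generator $\omega\in\Gamma(E,\Omega_E)$, namely $\res_s^\omega(h) = \sum_{q\colon\nu(q)=s}\res_q(h\omega) = 0$ for every $h\in\kO_{E,s}$, gives $\kappa^\omega(f,g) = 0$, so $A_s$ is coisotropic and all hypotheses of Theorem \ref{T:CYBEtorsfree} hold.

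I expect the only nonformal step to be the local analysis at $s$: pinning down the order $\End_O(O^{\oplus(n-1)}\oplus\widetilde{O})$ inside $\Mat_n(\widetilde{O})$, together with the conductor bookkeeping that simultaneously produces the colength $n^2-1$ and the fact that tracelessness pushes the $\widetilde{O}$-corner entry back into $O$. I would isolate this as one lemma treating the node and the cusp in parallel, so that both the cohomology count and the coisotropy argument can simply cite it.
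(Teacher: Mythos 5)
Your proof is correct, and for most of the proposition it runs along the same lines as the paper's: the same local description of $\End_O(\widehat{\kQ}_s)$ for $\widehat{\kQ}_s\cong O^{\oplus(n-1)}\oplus\widetilde{O}$ (the paper records it as the order with $\widetilde{O}$ in the top row, the conductor $I=\mathfrak{c}$ in the first column and $O$ elsewhere, i.e.\ your matrix transposed and with the $\widetilde{O}$--summand placed first), the same long exact sequence of the defining sequence with $\tr(\id_\kQ)=n\neq 0$, and an essentially identical coisotropy argument: the paper also observes that tracelessness forces the $\widetilde{O}$--corner entry of an element of $A_s$ into $O$, that $I\cdot\widetilde{O}=I\subseteq O$ makes the trace form on $A_s$ take values in $O$, and then invokes the defining property of Rosenlicht forms. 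The one genuinely different step is the vanishing $H^1\bigl(E,\mbox{\it End}_E(\kQ)\bigr)=0$. The paper proves it by citing the autoequivalence of $D^b\bigl(\Coh(E)\bigr)$ from \cite{BK3} taking $\kQ$ to $\CC_s$, so that $\Ext^1_E(\kQ,\kQ)\cong\Ext^1_E(\CC_s,\CC_s)\cong\CC^2$, and then comparing with the local--to--global exact sequence and the local computation $\Ext^1_O(\widetilde{O},\widetilde{O})\cong\CC^2$ (all other local $\Ext^1$'s vanishing by Gorensteinness). You instead embed $\mbox{\it End}_E(\kQ)$ into $\Mat_n(\widetilde{\kO})$ with skyscraper cokernel $\kT$ at $s$ of length $(n-1)^2+2(n-1)=n^2-1$ — a count that checks out, since $\dim_\CC(\widetilde{O}/O)=1$ and $\dim_\CC(\widetilde{O}/\mathfrak{c})=2$ at both the node and the cusp — and then use simplicity of $\kQ$ together with $H^i\bigl(E,\Mat_n(\widetilde{\kO})\bigr)=\Mat_n\bigl(H^i(\PP^1,\kO_{\PP^1})\bigr)$ and an Euler--characteristic count in the four--term sequence $0\to\CC\to\Mat_n(\CC)\to H^0(\kT)\to H^1\to 0$. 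Your route is more elementary and self--contained (no Fourier--Mukai input, and it also makes the surjectivity of $\tr$ onto $\widetilde{\kO}$ immediate from the corner entry $\End_O(\widetilde{O})=\widetilde{O}$, a point the paper verifies from the same matrix form); what the paper's route buys is brevity given the cited derived equivalence, plus the conceptual identification of the deformation theory of $\kQ$ with that of $\CC_s$, which explains \emph{why} $\Ext^1_E(\kQ,\kQ)$ is exactly $\CC^2$ and is entirely local.
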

\begin{proof}
Let $Q := \widehat{\kQ}_s$. It follows from the last part of Proposition \ref{P:simpleTF} that
\begin{equation}\label{E:localEnd}
\End_{O}(Q) \cong
\left(
\begin{array}{cccc}
\widetilde{O} & \widetilde{O} & \dots & \widetilde{O} \\
I & O & \dots & O \\
\vdots & \vdots & \ddots & \vdots \\
I & O & \dots & O
\end{array}
\right),
\end{equation}
where $I := \Hom_O\bigl(\widetilde{O}, O\bigr)$ is the conductor ideal. Therefore, the morphism $\mathsf{tr}$ in the exact sequence (\ref{E:AdTorsionFree}) is an epimorphism, as claimed. Consider the long exact cohomology sequence of (\ref{E:AdTorsionFree}). Taking into account that
$H^1(E, \widetilde{\kO}) \cong H^0\bigl(\PP^1, \kO_{\PP^1}\bigr) =  0$, we get:
$$
0 \lar H^0(E, \kA) \lar \End_E(\kQ) \stackrel{\mathsf{tr}}\lar H^0(E, \widetilde\kO) \lar H^1(E, \kA) \lar H^1(E, \mbox{\it End}_E(\kQ)\bigr) \lar 0.
$$
Since $H^0(E, \widetilde\kO) \cong \CC$ and $\mathsf{tr}\bigl(\mathsf{id}_{\kQ}\bigr) = n$, we conclude that $H^0(E, \kA) = 0$ and
$H^1(E, \kA) \cong  H^1(E, \mbox{\it End}_E(\kQ)\bigr)$. According to \cite[Corollary 4.3]{BK3}, there exists an auto--equivalence of the derived category
$D^b\bigl(\Coh(E)\bigr)$ mapping the torsion free sheaf $\kQ$ to the structure sheaf of the singular point $\CC_s$. Therefore,
$$
\Ext^1_E(\kQ, \kQ) \cong \Ext^1_E(\CC_s, \CC_s) \cong \Gamma\bigl(E, \mbox{\it Ext}^1_E(\CC_s, \CC_s)\bigr) \cong \CC^2,
$$
where the last isomorphism follows from a local computation. Next, we have the following local--to--global exact sequence:
\begin{equation*}
0 \lar H^1\bigl(E, \mbox{\it End}_E(\kQ)\bigr) \lar \Ext^1_E(\kQ, \kQ) \lar \Gamma\bigl(E, \mbox{\it Ext}^1_E(\kQ, \kQ)\bigr) \lar
H^2\bigl(E, \mbox{\it End}_E(\kQ)\bigr).
\end{equation*}
From the dimension reasons we have: $H^2\bigl(E, \mbox{\it End}_E(\kQ)\bigr) = 0$. The local structure of the $O$--module $Q$ is known:
$Q \cong O^{n-1} \oplus \widetilde{O}$. We have the following vanishings:
$$
\Ext^1_{O}(O, O) = \Ext^1_{O}(O, \widetilde{O}) = \Ext^1_{O}(\widetilde{O}, O) = 0
$$
(the last one follows for example from the fact that the local ring $O$ is Gorenstein). A local computation shows that both in nodal and cuspidal cases we have: $\Ext^1_{O}(\widetilde{O}, \widetilde{O}) \cong \CC^2$. This  implies that $H^1(E, \mbox{\it End}_E(\kQ)\bigr) = 0$.
The isomorphism  $\Gamma(\breve{E}, \kA) \cong \lieg \otimes \Gamma\bigl(\breve{E}, \kO_E\bigr)$ obviously follows from the fact that $\kQ\big|_{\breve{E}} \cong \kO_{\breve{E}}^{\oplus n}$. The first part of the proposition is proven.

\smallskip
\noindent
Consider  the following short exact sequence of $O$--modules:
$$
0 \lar A_s \lar
\left(
\begin{array}{cccc}
\widetilde{O} & \widetilde{O} & \dots & \widetilde{O} \\
I & O & \dots & O \\
\vdots & \vdots & \ddots & \vdots \\
I & O & \dots & O
\end{array}
\right)
  \stackrel{\mathsf{tr}}\lar \widetilde{O} \lar 0.
$$
It follows, that
$
A_s \subseteq
\left(
\begin{array}{cccc}
O & \widetilde{O} & \dots & \widetilde{O} \\
I & O & \dots & O \\
\vdots & \vdots & \ddots & \vdots \\
I & O & \dots & O
\end{array}
\right).
$
Since $I \cdot \widetilde{O} = I \subseteq O$, we may conclude  that  the trace form $A_s \times A_s \stackrel{\mathsf{tr}}\lar \widetilde{O}$ actually takes values in the ring $O$. It follows from the definition of Rosenlicht differential forms that  $A_s$ is a coisotropic Lie subalgebra of  $A_K$ with respect to the pairing (\ref{E:canpairing}).
\end{proof}

\begin{remark}
Using the technique of \cite{BodnarchukDrozd2} on the description of  Schurian objects in brick--tame  matrix problems, one can construct canonical forms (\ref{E:canformnodal}) and (\ref{E:canformcusp}) for arbitrary
mutually prime $(n, d) \in \mathbb{N} \times \mathbb{Z}$ and prove that the last statement of Proposition \ref{P:simpleTF} holds in the general case as well.
Therefore, Proposition \ref{R:AdP} is true in a greater generality, too.
\end{remark}

\subsection{Examples of solutions of CYBE, arising from torsion free sheaves}
Let $(E, \kA)$ be a pair from Proposition \ref{P:LieAlgTorsionFree}. The main result of this section are concrete formulae for the corresponding geometric $r$--matrix. We denote by
$\mathfrak{sl}_n(\CC) := \lieg = \lieg_+ \oplus \lieh \oplus \lieg_-$ the conventional triangular
decomposition of $\lieg$  into the direct sum of the Lie algebras of strictly upper triangular, diagonal and strictly lower triangular matrices. Let $\Phi_{\pm}$ be  the set of postive/negative roots of $\lieg$. Then we have: $\Phi_+ = \bigl\{(i, j) \in \mathbb{N}^2 \big| 1 \le i < j \le n\bigr\}$; for $\alpha = (i, j)  \in \Phi_\pm$ we write
$e_\alpha = e_{i,j}$.
Let $\xi = \exp\Bigl(\dfrac{2\pi i}{n}\Bigr)$ be a primitive $n$--th root of $1$ and $\zeta_j = \xi^j$. Then we  have the following  ``natural'' bases of the Cartan subalgebra  $\lieh$:
\begin{itemize}
\item $\bigl(g_1, \dots, g_{n-1}\bigr)$ with  $g_j = \mathsf{diag}\bigl(1, \zeta_j, \dots, \zeta_j^{n-1}\bigr)$ for
$1 \le j \le n-1$.
\item $\bigl(h_1, \dots, h_{n-1}\bigr)$ with  $h_j = \mathsf{diag}\bigl(0, \dots, 0, 1, -1, 0, \dots, 0\bigr)$, where $1$ stands at the $j$-th entry for $1 \le j \le n-1$.
\end{itemize}
In both cases, $\bigl(g_1^\ast, \dots, g_{n-1}^\ast\bigr)$ and $\bigl(h_1^\ast, \dots, h_{n-1}^\ast\bigr)$ denote the dual bases of $\lieh$.

\begin{theorem}\label{T:ExplicitSolFromTF} Let $\rho \in \Gamma(\breve{E}\times \breve{E}\setminus \Delta, \kA \boxtimes \kA)$ be the geometric $r$--matrix. Then there exists
a trivialization $\Gamma(\breve{E}, \kA) \stackrel{\xi}\lar \lieg \otimes \Gamma\bigl(\breve{E}, \kO_E\bigr)$ such the corresponding solution
$r = \rho^\xi$ of the classical Yang--Baxter equation is the following.
\begin{enumerate}
\item
If  $E$ is nodal then
$
r(x, y) = r_{\mathsf{st}}(x,y) + r_{\lieh} + r_{\mathsf{sp}},
$
where
$$
r_{\mathsf{st}}(x,y) = \frac{x}{y-x}\gamma + \Bigl(\frac{1}{2}\sum\limits_{j=1}^{n-1} g_j^* \otimes g_j + \sum\limits_{\alpha \in \Phi_+} e_{-\alpha} \otimes e_{\alpha}\Bigr)
$$
is the standard quasi--trigonometric $r$--matrix \cite{BelavinDrinfeld, KPSST},
$$
r_{\lieh} = \frac{1}{2n}\sum\limits_{j = 1}^{n-1} \Bigl(\frac{1 + \zeta_j}{1 - \zeta_j}\Bigr) g_{n-j} \otimes g_j \quad \mbox{\rm and} \quad
 r_{\mathsf{sp}}= \sum\limits_{\alpha \in \Phi_+} e_{-\alpha} \wedge \Bigl(\sum\limits_{k = 1}^{p(\alpha)} e_{\tau^k(\alpha)}\Bigr).
$$
Here,  for $\alpha = (i, j) \in \Phi_+$ we put: $p(\alpha) := i-1$ and $\tau(\alpha) := (i-1, j-1) \in \Phi_+$ provided  $i \ge 2$, whereas
$a \wedge b := a\otimes b - b \otimes a$ for $a, b \in \lieg$. In fact, $r_{\mathsf{sp}} \in \lieg \otimes \lieg$ is the constant solution of the classical Yang--Baxter equation, satisfying the constraint $r_{\mathsf{sp}} + r_{\mathsf{sp}}^{21} = \gamma$, which is given by the Belavin--Drinfeld triple
$(\Gamma_1, \Gamma_2, \tau)$, where 
\begin{center}
 \begin{tikzpicture}[scale=0.12]
\node[ve](a1) at(0,10) {} ;
\node[ve](a2) at (5,10){} ;
\node[ve](a3) at (10,10){};

\node (a0) at (15,10){...};

\node[ve](a4) at (20.2,10){};
\node[ve](a5) at (25.5,10){};

\draw (a1)node[above=4pt]{$_1$}  --(a2)node[above = 4pt]{$_2$} --(a3)--(a0)--(a4)node[above = 4pt]{$_{n-2}$} -- (a5) node[above = 4pt]{$_{n-1}$}   ;

\node[ve](b1) at(0,0) {};
\node[ve](b2) at (5,0){};
\node (b3) at (10,0){...};

\node [ve] (b31) at (15,0){};

\node[ve](b4) at (20.2,0){};
\node[ve](b5) at (25.5,0){};

\draw (b1)node[below=4pt]{$_1$}  --(b2)node[below = 4pt]{$_2$} --(b3)--(b31)--(b4)node[below = 4pt]{$_{n-2}$} -- (b5) node[below = 4pt]{$_{n-1}$}   ;

\node at (12,5){$\tau$};

 \draw[conj,->](a2) to (b1);
 \draw[conj,->](a3) to (b2);
 \draw[conj,->](a4) to (b31);

 \draw[conj,->](a5) to (b4);

 \draw[decoration={brace},decorate]
  (5,12) -- node[above=3pt] {$\Gamma_1$} (25,12);

\draw[decoration={brace,mirror},decorate]
  (0, -2) -- node[below=7pt] {$\Gamma_2$} (20,-2);
 \end{tikzpicture}
 \end{center}
\item If  $E$ is cuspidal  then
$$
r(x, y) = \frac{\gamma}{y-x} + \sum\limits_{k = 1}^{n-1} h_k^* \wedge e_{k+1, k} + \sum \limits_{k \ge l+2} \bigl(\sum\limits_{j= 0}^{l-1} e_{l-j, k - j-1}\bigr)\wedge e_{k,l}.
$$
\end{enumerate}
\end{theorem}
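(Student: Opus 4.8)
The plan is to reduce the computation of $\rho$ to linear algebra by means of Lemma \ref{L:rmatrandmaps} and then to carry it out in an explicit trivialization adapted to the canonical forms (\ref{E:canformnodal}) and (\ref{E:canformcusp}). By that lemma, $\rho(x_1, x_2)$ is the image under the Killing form of the linear map $\rho^\sharp(x_1, x_2) = \ev_{x_1} \circ \bigl(\res_{x_2}^\omega\bigr)^{-1} \colon \kA\big|_{x_2} \lar \kA\big|_{x_1}$, so everything hinges on describing, for a point $x_2 \in \breve{E}$, the space $\Gamma\bigl(E, \kA(x_2)\bigr)$ together with the residue isomorphism $\res_{x_2}^\omega$ and the evaluation map $\ev_{x_1}$. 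Since $H^0(E,\kA)=0=H^1(E,\kA)$, the space $\Gamma\bigl(E,\kA(x_2)\bigr)$ has dimension $\dim_\CC \lieg = n^2-1$ and a global section is determined by its residue at $x_2$; the task is to produce, for each $\alpha \in \kA\big|_{x_2} \cong \lieg$, the unique such section and to read off its value at $x_1$.

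First I would fix the trivialization $\Gamma(\breve{E}, \kA) \stackrel{\xi}\lar \lieg \otimes \Gamma(\breve{E}, \kO_E)$ from Proposition \ref{P:LieAlgTorsionFree}, identifying $\Gamma(\breve{E}, \kO_E)$ with $\CC[z, z^{-1}]$ (nodal, $\omega = dz/z$) or with $\CC[z]$ (cuspidal, $\omega = dz$). The essential input is then the precise description of $\Gamma\bigl(E, \kA(x_2)\bigr)$ as the subspace of $\lieg$--valued (Laurent) functions that are regular away from $x_2$, have at most a simple pole at $x_2$, and extend through the singular point $s$, i.e.~whose germ at $s$ lies in $A_s$. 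Using the category of triples (Theorem \ref{T:keyonTF}), the local model $\widehat{\kQ}_s \cong O^{n-1}\oplus \widetilde{O}$ of Proposition \ref{P:simpleTF}, the endomorphism ring (\ref{E:localEnd}), and the fact that $A_s$ is its traceless part with the $(1,1)$--entry additionally constrained to lie in $O$, I would translate ``extension through $s$'' into explicit congruences on the coefficients of a section $M(z)$ written in the root basis $\{e_{\alpha}, h_j\}$. Concretely, the gluing matrices $\Theta_0, \Theta_\infty$ (nodal) respectively $\Theta_\circ, \Theta_\varepsilon$ (cuspidal) cyclically shift the standard basis of $\CC^{n+1}$, so that the matching condition at $s$ couples the entry $e_{i,j}$ with $e_{i-1,j-1}, e_{i-2,j-2}, \dots$ along diagonals; this is exactly the combinatorial mechanism producing the shift $\tau(i,j) = (i-1, j-1)$ and the range $1 \le k \le p(\alpha) = i-1$ appearing in $r_{\mathsf{sp}}$.

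With the lattice $\Gamma\bigl(E, \kA(x_2)\bigr)$ in hand, the next step is routine but lengthy: for each root vector $e_\alpha$ and each Cartan generator, solve the interpolation problem ``prescribed residue at $x_2$, regular on $E \setminus \{x_2\}$'' and then substitute $z = x_1$; it is convenient to compute the analogous data for $\mbox{\it End}_E(\kQ)$ first and project along the exact sequence (\ref{E:AdTorsionFree}). The root directions and the Cartan directions behave differently: the off--diagonal $\widetilde{O}$/$I$ pattern of (\ref{E:localEnd}) forces the node's two preimages of $s$ to contribute asymmetrically, and summing the resulting finite geometric sums in the roots of unity $\zeta_j$ is what yields the coefficient $\tfrac{1}{2n}\tfrac{1+\zeta_j}{1-\zeta_j}$ in $r_{\lieh}$. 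Dualizing $\rho^\sharp$ through the Killing form and separating the pole part from the holomorphic part as in the presentation (\ref{E:structureRmat}) then produces the pole term $\tfrac{x}{y-x}\gamma$ in the nodal case and $\tfrac{\gamma}{y-x}$ in the cuspidal case, with an explicit holomorphic remainder.

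Finally I would reorganize the holomorphic remainder into the named pieces and verify their stated properties: that $r_{\mathsf{st}}$ is the standard quasi--trigonometric matrix, that $r_{\mathsf{sp}}$ satisfies $r_{\mathsf{sp}} + r_{\mathsf{sp}}^{21} = \gamma$ and coincides with the Belavin--Drinfeld constant $r$--matrix for the triple $(\Gamma_1, \Gamma_2, \tau)$ depicted in the statement, and in the cuspidal case that the remainder equals $\sum_{k} h_k^\ast \wedge e_{k+1,k} + \sum_{k \ge l+2}\bigl(\sum_{j}e_{l-j,k-j-1}\bigr)\wedge e_{k,l}$. The main obstacle throughout is the second step: faithfully encoding how the non--locally--free structure of $\kQ$ at $s$ restricts global sections, since it is precisely this coupling of matrix entries along diagonals — absent when $\kA$ is locally free, where one recovers only the bare trigonometric or rational $r$--matrix — that generates the extra terms $r_{\lieh}$ and $r_{\mathsf{sp}}$; keeping track of the resulting index bookkeeping in the root basis is where the real work lies.
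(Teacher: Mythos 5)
Your proposal is correct and follows essentially the same route as the paper: the paper likewise reduces everything to the linear map $r^\sharp_{x,y} = \overline{\ev}_y \circ \overline{\res}_x^{-1}$ from Lemma \ref{L:rmatrandmaps}, describes the image of $\Gamma\bigl(E, \kA(x)\bigr)$ inside $\lieg[z]$ as an explicit solution space $\mathsf{Sol}$ cut out by linear conditions involving the gluing matrices $\Theta_0, \Theta_\infty$ (respectively $\Theta_\circ, \Theta_\varepsilon$) from the triple data, and then solves the interpolation problem on root vectors and on the Cartan part, where the cyclic shift $\tau$ with eigenbasis $(g_j)$ and eigenvalues $\zeta_j$ produces exactly the coefficients $\frac{\zeta_j}{1-\zeta_j}$ assembling into $r_{\lieh}$, and the diagonal coupling $e_{i,j} \mapsto e_{i-1,j-1}$ produces $r_{\mathsf{sp}}$. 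The only cosmetic difference is that the paper imports the trivialization and the description of $\mathsf{Sol}$ from \cite[Subsection 5.1.4]{BK4} and \cite[Corollary 6.5]{BH} rather than re-deriving the congruences at $s$ from (\ref{E:localEnd}) as you propose, but this is the same computation.
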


\begin{proof} It was explained in \cite[Subsection 5.1.4]{BK4} and \cite[Corollary 6.5]{BH} that a choice of homogeneous coordinates on $\PP^1$ together with  a choice of trivializations $\kO_{\PP^1}(c)\Big|_{\widetilde{Z}} \stackrel{\xi_c}\lar \kO_{\widetilde{Z}}$ specify   a trivialization $\Gamma(\breve{E}, \kA) \stackrel{\xi}\lar \lieg \otimes \Gamma\bigl(\breve{E}, \kO_E\bigr)$ and an embedding
$\Gamma\bigl(E, \kA(x)\bigr) \stackrel{\bar\xi}\lar \lieg[z]$
such that the following diagram of vector spaces
\begin{equation}
\begin{array}{c}
\xymatrix{
\kA\big|_{x} \ar[d]_-{\xi_x} & \Gamma\bigl(E, \kA(x)\bigr) \ar[r]^-{\mathsf{ev}_y} \ar[l]_-{\mathsf{res}^\omega_x} \ar[d]^-{\bar\xi} & \kA\big|_{y} \ar[d]^-{\xi_y} \\
\lieg  & \mathsf{Sol} \ar[r]^{\overline{\mathsf{ev}}_y} \ar[l]_{\overline{\mathsf{res}}_x} & \lieg
}
\end{array}
\end{equation}
is commutative, where
\begin{itemize}
\item The vector space $\mathsf{Sol} := \mathsf{Im}(\bar\xi) \subset \lieg[z]$ has the following description:
\begin{equation}\label{E:SolNode}
\mathsf{Sol} = \left\{A + zB \left|
\begin{array}{lcl}
A \Theta_0 & = & - x \Theta_0 C\\
B \Theta_\infty & = & \Theta_\infty C
\end{array}
\mbox{\rm for some}\;  C \in \mathfrak{gl}_{n+1}(\CC)
\right.
\right\}
\end{equation}
in the nodal case, and
\begin{equation}\label{E:SolCusp}
\mathsf{Sol} = \left\{A + zB \left|
\begin{array}{lcl}
B \Theta_\circ & = & \Theta_\circ  D\\
A \Theta_\circ + B \Theta_\varepsilon & = & (\Theta_\varepsilon - x\Theta_\circ)\cdot  D
\end{array}
\;\mbox{\rm for some}\;  D \in \mathfrak{gl}_{n+1}(\CC)
\right.
\right\}
\end{equation}
in the cuspidal case.
\item For $\Phi = A + z B \in \mathsf{Sol}$ we have:
$$
\overline{\mathsf{ev}}_y(\Phi) = \frac{1}{y-x}(A + y B) \quad \mbox{and} \quad
\overline{\mathsf{res}}_x(\Phi) =
\left\{
\begin{array}{cc}
\dfrac{1}{x} \bigl(A + xB\bigr) & \mbox{nodal case} \\
A + x B & \mbox{cuspidal case}.
\end{array}
\right.
$$
\end{itemize}
We denote by $r_{x,y}^\sharp:= \overline{\ev_y} \circ \overline{\res}_x^{-1} \in \Hom_{\CC}(\lieg, \lieg)$, which is the image of the tensor $r(x,y) \in \lieg \otimes \lieg$ under the
isomorphism $\lieg \otimes \lieg \lar \Hom_{\CC}(\lieg, \lieg)$ induced by the trace form.

\smallskip
\noindent
\underline{1.~The nodal case}. To begin with, let us observe that
$$
\mathsf{Sol} = \left\{
- x\left(
\begin{array}{c|c}
D & 0 \\
\hline
a & \beta
\end{array}
\right) +
z \left(
\begin{array}{c|c}
\beta & b \\
\hline
0 & D
\end{array}
\right)
\left|
\begin{array}{l}
D \in \mathfrak{gl}_{n-1}(\CC), \; \beta = - \mathsf{tr}(D) \\
a, b \in \mathsf{Mat}_{1 \times (n-1)}(\CC)
\end{array}
\right.
\right\}
$$
From this description of $\mathsf{Sol}$ it is easy to conclude that for any $1 \le i < j \le n$ we have:
$$
\overline{\res}_x^{-1}\bigl(e_{i,j}\bigr) =
\left\{
\begin{array}{lc}
z e_{i,j} + (z-x)\bigl(e_{i-1, j-1}+ \dots + e_{1, j-i+1}\bigr) & i \ge 2 \\
z e_{1, j}                                                      & i = 1
\end{array}
\right.
$$
Consider the linear automorphism $\CC^n \stackrel{\tau}\lar \CC^n$ given by the formula
$\tau(\beta_1, \beta_2, \dots, \beta_n) = (\beta_2, \dots, \beta_n, \beta_1)$. Clearly, the spectrum of
$\tau$ is $\bigl\{1, \xi, \dots, \xi^{n-1}\bigr\} = \bigl\{1, \zeta_1, \dots, \zeta_{n-1}\bigr\}$.
We view $\CC^n$ as the space of the diagonal matrices of size $(n \times n)$. Then  $(g_1, \dots, g_{n-1})$ is a basis of $\lieh$ and moreover:
$
\tau(g_j) = \zeta_j g_j$ and $g_j^\ast = \dfrac{1}{n} g_{n-j}$ for all
$1 \le j \le n-1$.
For any $h \in \lieh$ we have:
$
h - \tau(h) \xrightarrow{\overline{\res}_x^{-1}} z h - x \tau(h) \xrightarrow{\overline{\ev}_y}
\dfrac{y}{y-x}\bigl(h - \tau(h)\bigr) + \tau(h).
$
Therefore, we get:
$$
\left\{
\begin{array}{ll}
r_{x,y}^{\sharp}(g_j) = \dfrac{y}{y-x} g_j + \dfrac{\zeta_j}{1-\zeta_j} g_j & \;  \mbox{for all} \quad
1 \le j \le n-1,\\
r_{x,y}^{\sharp}(e_{\alpha}) = \dfrac{y}{y-x} e_{\alpha} + \sum\limits_{\alpha \in \Phi_+} e_{-\alpha} \wedge \Bigl(\sum\limits_{k = 1}^{p(\alpha)} e_{\tau^k(\alpha)}\Bigr) & \; \mbox{for all} \quad
\alpha \in \Phi_+.
\end{array}
\right.
$$
From these formulae we obtain: $r(x,y) = r_{\mathsf{st}}(x,y) + r_{\lieh} + r_{\mathsf{sp}} = $
$$
\left(\frac{x}{y-x}\gamma +  \widehat{\gamma}\right) +
 \frac{1}{2n}\sum\limits_{j = 1}^{n-1} \Bigl(\frac{1 + \zeta_j}{1 - \zeta_j}\Bigr) g_{n-j} \otimes g_j + \sum\limits_{\alpha \in \Phi_+} e_{-\alpha} \wedge \Bigl(\sum\limits_{k = 1}^{p(\alpha)} e_{\tau^k(\alpha)}\Bigr),
$$
where $\widehat{\gamma} = \Bigl(\dfrac{1}{2}\sum\limits_{j=1}^{n-1} g_j^* \otimes g_j + \sum\limits_{\alpha \in \Phi_+} e_{-\alpha} \otimes e_{\alpha}\Bigr) = \Bigl(\dfrac{1}{2n}\sum\limits_{j=1}^{n-1} g_{n-j} \otimes g_j + \sum\limits_{\alpha \in \Phi_+} e_{-\alpha} \otimes e_{\alpha}\Bigr)$.

\smallskip
\noindent
\underline{2.~The  cuspidal case}.
Let $C \in \lieg$ and $\overline{\res}_x^{-1}(C) =: A + z B
\in \mathsf{Sol}$. Then we have:
$$
C \xrightarrow{\overline{\res}_x^{-1}} A + zB \xrightarrow{\overline{\ev}_y}
\frac{1}{y-x}\bigl(A + x B\bigr) + B = \frac{1}{y-x} C + B.
$$
It follows from the description (\ref{E:SolCusp}) that the matrices $A, B \in \mathfrak{sl}_{n}(\CC)$ and
$C \in \mathfrak{gl}_{n+1}(\CC)$ obey the following relation:
$$
\left\{
\begin{array}{lcl}
(C | 0) & = & \Theta_\varepsilon  D - (0 | B) \\
(B | 0) & = & \Theta_\circ D. \\
\end{array}
\right.
$$
Therefore, we get the following formulae:
$$
\left\{
\begin{array}{lcll}
r_{x,y}^{\sharp}(h_j) & = & \dfrac{1}{y-x} h_j + e_{j+1, j} & \;  \mbox{for all} \quad
1 \le j \le n-1,\\
r_{x,y}^{\sharp}(e_{j, j+1}) & = &  \dfrac{1}{y-x} e_{j, j+1} - h_j^\ast & \;  \mbox{for all} \quad
1 \le j \le n-1,\\
r_{x,y}^{\sharp}(e_{k, l}) & = &  \dfrac{1}{y-x} e_{k, l} - \bigl(e_{k, l-1} + e_{k-1, l-2} + \dots +
 e_{1, k-l}\bigr) & \; \mbox{for all} \quad
k - l \ge 2,
\end{array}
\right.
$$
which imply the result.
\end{proof}

\subsection{Geometrization of the solutions of CYBE for the Lie algebra $\mathfrak{sl}_2(\CC)$}
 According to results of Belavin and Drinfeld \cite{BelavinDrinfeld} and Stolin  \cite{Stolin},
there are precisely six non--equivalent solutions of the classical Yang--Baxter equation (say, with
one spectral parameter (\ref{E:CYBE1})):
one elliptic, two trigonometric and three rational for the Lie algebra $\mathfrak{sl}_2(\CC)$.
Fix the following basis of $\mathfrak{sl}_2(\CC)$:
$
h =
\left(
\begin{array}{cc}
1 & 0 \\
0 & -1
\end{array}
\right),$
$
e =
\left(
\begin{array}{cc}
0 & 1 \\
0 & 0
\end{array}
\right)$ and $
f =
\left(
\begin{array}{cc}
0 & 0 \\
1 & 0
\end{array}
\right).
$

\smallskip
\noindent
1.~The elliptic solution of Baxter
$$
r(z) =   \frac{\mathrm{cn}(z)}{\mathrm{sn}(z)} h \otimes h +
\frac{1+ \mathrm{dn}(z)}{\mathrm{sn}(z)}(e \otimes f  +
f \otimes e)  +
\frac{1 -  \mathrm{dn}(z)}
{\mathrm{sn}(z)}(e \otimes e + f \otimes f)
$$
corresponds to the geometric $r$--matrix associated with the pair $(E, \kA)$, where
$E$ is an elliptic curve and $\kA = \Ad(\kP)$ for a simple vector bundle $\kP$ of rank two and degree
one; see for example \cite[Theorem 4.3.6]{BK4}.

\smallskip
\noindent
2.~The trigonometric solution of Baxter
$$
r(z) =   \frac{\mathrm{cot}(z)}{2} h \otimes h +
\frac{1}{\mathrm{sin}(z)}(e \otimes f  +
f \otimes e)
$$
corresponds to the geometric $r$--matrix associated with  the pair $(E, \kA)$, where
$E$ is a nodal Weierstra\ss{} cubic  and  $\kA = \overline{\Ad}(\kQ)$ for the  simple torsion free sheaf
 $\kQ$ of rank two and  Euler characteristic one, which is not locally free; see Theorem \ref{T:ExplicitSolFromTF}.

\smallskip
\noindent
3.~The trigonometric solution of Cherednik \cite{Cherednik2}
$$
r(z) =   \frac{\mathrm{cot}(z)}{2} h \otimes h +
\frac{1}{\mathrm{sin}(z)}(e \otimes f  +
f \otimes e) + \mathrm{sin}(z) f \otimes f
$$
corresponds to the geometric $r$--matrix associated with  the pair $(E, \kA)$, where
$E$ is a nodal Weierstra\ss{} cubic  and  $\kA = {\Ad}(\kP)$ for a   simple locally  free sheaf
 $\kP$ of rank two and  degree one; see \cite[Subsection 5.2.3]{BK4}.

\smallskip
\noindent
4.~The rational solution of Yang
$
r(z) =   \dfrac{1}{z} \left(\dfrac{1}{2} h \otimes h +
e \otimes f  +
f \otimes e\right)
$
corresponds to the geometric $r$--matrix associated with  the pair $(E, \kA)$, where
$E$ is the cuspidal  Weierstra\ss{} cubic  and
$
\kA = \nu_\ast
\left(\mbox{\it sl}\left(
\begin{array}{cc}
\kI & \kI \\
\kI & \kI
\end{array}
\right)
\right),
$
where $\PP^1 \stackrel{\nu}\lar E$ is the normalization map and $\kI$ is the ideal sheaf
of the point $\nu^{-1}(s)$; see Remark \ref{R:Yangian}.

\smallskip
\noindent
5.~The rational solution of Stolin \cite{Stolin}
$$
r(z) =   \frac{1}{z} \left(\dfrac{1}{2} h \otimes h +
e \otimes f  +
f \otimes e\right) + z(f \otimes h + h \otimes f) - z^3 f  \otimes f
$$
corresponds to the geometric $r$--matrix associated with  the pair $(E, \kA)$, where
$E$ is the cuspidal  Weierstra\ss{} cubic  and  $\kA = {\Ad}(\kP)$ for a   simple locally  free sheaf $\kP$ of rank two and  degree one; see \cite[Subsection 5.2.5]{BK4}.

\smallskip
\noindent
6.~The rational solution
$$
r(z) =   \frac{1}{z} \left(\dfrac{1}{2} h \otimes h +
e \otimes f  +
f \otimes e\right) + \dfrac{1}{2}\bigl(h \otimes f - f \otimes h\bigr)
$$
corresponds to the geometric $r$--matrix associated with  the pair $(E, \kA)$, where
$E$ is the cuspidal  Weierstra\ss{} cubic  and  $\kA = \overline{\Ad}(\kQ)$ for  the  simple torsion free sheaf
 $\kQ$ of rank two and  Euler characteristic one, which is not locally free;  see Theorem \ref{T:ExplicitSolFromTF}.

\end{document}